\author{Lindsay Dever}
\address{Millersville University, Department of Mathematics, PO Box 1002,
	Millersville, PA 17551, USA}
\email{Lindsay.Dever@millersville.edu}
\address{Bryn Mawr College, Department of Mathematics, 101 N Merion Ave,
	Bryn Mawr, PA 19010, USA}
\title{Bias in the Distribution of Holonomy}
\newcommand\PSL{\mathrm{PSL}_2\mathbb{C}}
\newcommand\gtildehat{\hat{ \tilde g}_{y, \eta}}
\newcommand\PSU{\mathrm{PSU}}
\newcommand\GL{\mathrm{GL}}
\newcommand{\p}[2]{\frac{\partial #1}{\partial #2}}
\newcommand\vol{\mathop{\mathrm{vol}}}
\newcommand\Ei{\mathop{\mathrm{Ei}}}
\newcommand\hol{\mathop{\mathrm{hol}}}
\newcommand\sump{\sideset{}{^P}\sum}
\newcommand\sgn{\text{sgn}}
\newcommand\feven{f_{\text{even}}}
\newcommand\fodd{f_{\text{odd}}}
\newtheorem{theorem}{Theorem}[section]
\newtheorem{lemma}[theorem]{Lemma}
\newtheorem{prop}[theorem]{Proposition}
\newtheorem{cor}[theorem]{Corollary}
\theoremstyle{definition}
\newtheorem{remark}[theorem]{Remark}
\title[Bias in the distribution of holonomy]{Bias in the distribution of holonomy\\  on compact hyperbolic 3-manifolds}
\begin{document}

\begin{abstract} 
Ambient prime geodesic theorems provide an asymptotic count of closed geodesics by their length and holonomy and imply effective equidistribution of holonomy. We show that for a smoothed count of closed geodesics on compact hyperbolic 3-manifolds, there is a persistent bias in the secondary term which is controlled by the number of zero spectral parameters. In addition, we show that a normalized, smoothed bias count is distributed according to a probability distribution, which we explicate when all distinct, non-zero spectral parameters are linearly independent. Finally, we construct an example of dihedral forms which does not satisfy this linear independence  condition.
\end{abstract}

\subjclass[2020]{Primary 11F72; Secondary 53C22, 53C29, 58J50}
\keywords{Holonomy, bias, prime geodesic theorem, hyperbolic 3-manifold, spectral geometry, trace formula}

\maketitle

\section{Introduction and Background}

\subsection{Distribution of holonomy}

The Prime Geodesic Theorem for a hyperbolic 3-manifold $\Gamma \backslash \mathbb{H}^3$ states that the count $\pi_\Gamma(y)$ of primitive geodesics of length up to $y$ satisfies
\[\pi_\Gamma(y) = \text{Ei}_\Gamma(y) + \mathrm{O}_{\Gamma, \epsilon}(e^{(\frac53+\epsilon)y}),\]
where 
\begin{equation*}
\label{ei-varpi}
\Ei\nolimits_{\Gamma}(y)=\int_2^y d\varpi_{\Gamma}(t),\quad d\varpi_{\Gamma}(t)=\bigg(\frac{e^{2t}}t+\smash[t]{\sum_{j=1}^k}\frac{e^{(1+\nu_j)t}}t\bigg)\,d t
\end{equation*}
 and $\{0 <\nu_k \leq ... \leq \nu_1< 1\}$ are the eigenvalues $1 - \nu_j^2$ of the hyperbolic Laplacian \cite[Theorem 5.1]{Sarnak1983}.

The closed geodesics on a hyperbolic 3-manifold are parametrized both by their hyperbolic length and holonomy, which is the angle of rotation from parallel transport along the geodesic; see \eqref{T-def} for a full definition. We can then count geodesics by their full parameter space of both length and holonomy. Given an interval $I \subseteq \mathbb{R}/2 \pi \mathbb{Z}$, we define $\pi_\Gamma(y, I)$ as the count of geodesics with length up to $y$ and holonomy in $I$. In the broader context of locally symmetric spaces of negative curvature and finite volume, Sarnak and Wakayama showed that holonomy is equidistributed; in the context of hyperbolic 3-manifolds, the count of primitive geodesics satisfies
\[\pi_\Gamma(y,I) \sim_\Gamma \frac{|I|}{2\pi} \pi(y)\]
for any interval $I \subseteq \mathbb{R}/2 \pi \mathbb{Z}$  \cite[Theorem 1, Corollary 1]{SarWa}. See also \cite{MMO2014, MMO2014a} which improves the error term by a small, unspecified power savings in the more general setting of a geometrically finite, Zariski dense $\Gamma < \PSL$.
 
In the context of compact hyperbolic 3-manifolds, the above result was improved significantly in \cite[Theorem 6.5]{DM21}, which showed that
\[\pi_\Gamma(y,I) = \frac{|I|}{2\pi} \pi_\Gamma(y) + \mathrm{O}_\Gamma( e^{5y/3}).\]
This more refined count implies not only effective equidistribution, but also equidistribution in shrinking intervals of holonomy of size $\gg e^{-y(1/3-\delta)}$ for any $\delta>0$.

Each of these results uses a ``sharp'' count on both the length and holonomy. We can also consider a count of geodesics weighted by smooth functions $f: \mathbb{R}/2 \pi \mathbb{Z} \rightarrow \mathbb{R}$ on the holonomy and $g_{y, \eta}: \mathbb{R} \rightarrow \mathbb{R}$ on the length; this is  a natural step in the proof of sharp-cutoff results. To define  $g_{y, \eta}$, which approximates a sharp cutoff at $y$, we fix a smooth, even, non-negative function $\psi :\mathbb{R} \rightarrow \mathbb{R}$ which is compactly supported on $[-1,1]$ and has integral $1$,  and let  $g_{y, \eta} := \psi_\eta \star \chi_{[-y,y]}$, where $\psi_\eta(t) := \frac{1}{\eta}\psi\left(\frac{t}{\eta}\right)$ and $\chi_{[-y, y]}$ is the characteristic function on $[-y, y]$. For this smoothed count and $0<\eta\leq \eta_0$, we have the stronger asymptotic
\begin{equation}\label{smooth-APGT}
\begin{split}
\pi_\Gamma(f,g_{y, \eta}) & := \sump_{[\gamma]} f(\hol(\gamma)) g_{y, \eta}(\ell(\gamma))\\
&= \frac{1}{2\pi} \int_0^{2\pi} f(\theta)\, d\theta \cdot \int_2^{\infty} g_{y, \eta}(u) \, d \varpi_\Gamma(u) + \mathrm{O}_{\Gamma, \eta_0, f}\big(e^y \big),
\end{split}
\end{equation}
where the error term exhibits essentially square-root cancellation compared to the main term \cite[Proposition 6.1]{DM21}. 

A natural question following equidistribution of holonomy is whether there is bias in the secondary term. This can be captured by choosing a function $f$ on holonomy which averages to zero, which will cause the main term to disappear. For technical reasons, we also choose to weight the count by the length of the geodesic, defining
\begin{equation}\label{bias-count-def-intro}
L^P[f,g_{y, \eta}] := \sump_{[\gamma]} \ell(\gamma) f(\hol(\gamma)) g_{y, \eta}(\ell(\gamma))
\end{equation}
for an even, smooth function $f: \mathbb{R}/2 \pi \mathbb{Z}\rightarrow \mathbb{R}$ such that $\hat f(0) = 0$ and a smoothing parameter $\eta>0$; we fix $f$ and $\eta$ throughout this paper. We also normalize to $E[f,g_{y, \eta}] := e^{-y} L^P[f,g_{y, \eta}]$.

With this normalization, we can compute the average value of $E[f, g_{y, \eta}]$. The average depends on  multiplicities of the irreducible, unitary representations $\pi_{\nu,p}$ of $L^2(\Gamma\backslash G)$; see Section \ref{background} for details on these representations. In particular, the average value  depends on the multiplicities of zero spectral parameters.
\begin{theorem}\label{Eyaverage-intro} The average value of $E[f, g_{y, \eta}]$ as $y \rightarrow \infty$ is
\begin{equation}\label{b-f-eta-intro}
b_{f,\eta} = \left(\sum_{p \neq 0} m_\Gamma(\pi_{0, p}) \Re\big(\hat f(p)\big)  -  2 \Re\big(\hat f(1)\big)\right) 2 c_{0, \eta},
\end{equation}
where $m_{\Gamma}(\pi_{0,p})$ is the multiplicity of representations in $L^2(\Gamma \backslash G)$ with spectral parameter $0$ and non-zero integer parameter $p$ and $c_{0, \eta} = \int_{-\infty}^\infty \psi_\eta(t) e^t \, dt$ is a constant accounting for the choice of smooth cutoff function.
\end{theorem}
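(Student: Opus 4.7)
The strategy is to apply the trace formula for compact quotients of $\PSL$---in the form developed in \cite{DM21}, which incorporates both the length and holonomy parameters---to a test function built from $t\,g_{y,\eta}(t)$ on the length variable and a character on the holonomy variable. Since $f$ is real, even, and $\hat f(0)=0$, its Fourier expansion reads $f(\theta)=\sum_{p\neq 0}\hat f(p)\, e^{ip\theta}$ with $\hat f(p)=\hat f(-p)=\Re(\hat f(p))$, so it suffices to analyze, for each $p\neq 0$, the sum $\sump_{[\gamma]} \ell(\gamma)\,e^{ip\hol(\gamma)}\,g_{y,\eta}(\ell(\gamma))$ and then combine. On the geometric side this is recovered up to non-primitive contributions that decay rapidly in $y$; on the spectral side, one obtains a sum of multiplicities $m_\Gamma(\pi_{\nu,p})$ weighted by integrals of the form $\int g_{y,\eta}(t)\,e^{(1+\nu)t}\,dt$ arising from the length density $t\,d\varpi_\Gamma$ associated with the principal series characters.

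The next step is to compute these integrals explicitly using $g_{y,\eta}=\psi_\eta\star\chi_{[-y,y]}$; a change of variables gives
\[
\int g_{y,\eta}(t)\,e^{(1+\nu)t}\,dt \;=\; c_{\nu,\eta}\,\frac{e^{(1+\nu)y}-e^{-(1+\nu)y}}{1+\nu}, \qquad c_{\nu,\eta} \;=\; \int \psi_\eta(u)\,e^{(1+\nu)u}\,du,
\]
so that after multiplication by $e^{-y}$ and restriction to $\nu\in i\mathbb{R}$ (the only relevant spectrum for $p\neq 0$ on a compact manifold), each $\nu\neq 0$ contribution becomes an oscillating exponential $e^{\pm i|\nu|y}/(1+\nu)$ that averages to zero in the Cesàro sense, while the $\nu=0$ contribution yields the $y$-independent quantity $c_{0,\eta}$. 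Combining the Fourier modes at $\pm p$ with the symmetric pair of terms $e^{ip\theta}e^{\nu t}+e^{-ip\theta}e^{-\nu t}$ in the Harish--Chandra character produces the prefactor $2c_{0,\eta}$, and summing over $p\neq 0$ gives $\sum_{p\neq 0} m_\Gamma(\pi_{0,p})\,\Re(\hat f(p))\cdot 2c_{0,\eta}$.

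The main obstacle is identifying the universal correction $-2\Re(\hat f(1))\cdot 2c_{0,\eta}$, which is independent of the spectral multiplicities and therefore must originate in a structural part of the trace formula rather than in the discrete spectrum. I expect it to arise from the Prime Geodesic Theorem main term $e^{2t}/t$---equivalently, the trivial representation $\pi_{1,0}$---being carried by the length weight $\ell(\gamma)$ and the smoothing $g_{y,\eta}$ into a constant-in-$y$ contribution after projection onto the $p=\pm 1$ Fourier modes of $f$, or else from a related identity/intertwining term in the trace formula at a distinguished spectral point. Pinning this down will require a careful contour shift (or integration-by-parts) argument showing that exactly one residue survives after the $e^{-y}$ normalization and that its coefficient is precisely $-2$. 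Once this term is in hand, collecting all surviving contributions and discarding the oscillatory pieces via Cesàro averaging yields $b_{f,\eta}$ as stated.
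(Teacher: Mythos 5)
Your first half runs parallel to the paper: the $s=0$ principal-series terms survive the $e^{-y}$ normalization and the Ces\`aro average with value $2c_{0,\eta}\sum_{p\neq0}m_\Gamma(\pi_{0,p})\Re(\hat f(p))$, while the $s\neq0$ terms oscillate and average out (in the paper this is Proposition \ref{geom-trace-est} combined with the $h(x)=x$ case of Theorem \ref{Eydistribution}, i.e.\ Corollary \ref{mu-mean}; doing it directly as you propose also requires the Weyl bound of Proposition \ref{spectral-bound} together with the Schwartz decay of $\hat f$ and $c_{s,\eta}$ to interchange the $y$-average with the infinite spectral sum, and Lemma \ref{non-primitive-bound} for the imprimitive geodesics — both routine). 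The genuine gap is the correction $-2\Re(\hat f(1))\cdot 2c_{0,\eta}$, which you leave as an expectation to be confirmed by a contour-shift/residue analysis of the Prime Geodesic Theorem density $e^{2t}/t$. That is not where it comes from, and no residue argument is needed. Any trace formula in this setting carries the Weyl-discriminant weight $w(\gamma)=(e^{\ell(\gamma)}+e^{-\ell(\gamma)}-2\cos\hol(\gamma))^{-1}$ on the geometric side, and your plan never engages with this weight; indeed your spectral transform $\int g_{y,\eta}(t)e^{(1+\nu)t}\,dt$ already presupposes testing against $(e^u+e^{-u})g_{y,\eta}(u)$ to cancel it, and that cancellation is exactly the bookkeeping that generates the missing term.

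Concretely, in the paper the correction arrives in two equal halves. First, the trivial-representation term of the trace formula is $\tfrac12\int_T|D(t^{-1})|^{1/2}F(t)\,dt$ with $|D(t^{-1})|^{1/2}=e^u+e^{-u}-2\cos\theta$; since $\hat f(0)=0$, only the $-2\cos\theta$ piece survives, pairing with the $p=\pm1$ Fourier modes of $f$ to give $-(\hat f(1)+\hat f(-1))\,e^y c_{0,\eta}$ — the non-oscillating term in Proposition \ref{geom-trace-est} (see Proposition \ref{separable-trace}). Second, converting the trace-formula weight $\ell(\gamma_0)\,w(\gamma)(e^{\ell(\gamma)}+e^{-\ell(\gamma)})$ into the bare weight $\ell(\gamma)$ uses $1-(e^{\ell(\gamma)}+e^{-\ell(\gamma)})w(\gamma)=-2\cos(\hol\gamma)\,w(\gamma)$; the resulting holonomy test function $\cos\theta\,f(\theta)$ has nonzero mean $\Re(\hat f(1))$, so feeding this difference back through the trace formula picks up the trivial representation's $e^y$ main term a second time, yielding another $-(\hat f(1)+\hat f(-1))\,e^y c_{0,\eta}$ (Lemma \ref{change-of-weight}); the two halves combine in Proposition \ref{bias-count}. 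Without tracking $w(\gamma)$ you would at best recover half the correction (coefficient $-1$ instead of $-2$), and your attribution of it to the $e^{2t}/t$ term of the ambient count does not produce it. The imprimitive geodesics, which you do flag, are indeed harmless ($\mathrm{O}_{\Gamma,f,\eta}(e^{y/2})$). So the oscillation/averaging part of your plan is sound and essentially the paper's, but the decisive structural term of $b_{f,\eta}$ is not derived and the proposed mechanism for it is off.
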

We refer to  $b_{f, \eta}$ as the bias constant. Note that the Schwartz decay of $\hat f$, together with the Weyl bound from Proposition \ref{spectral-bound}, guarantees that the above sum converges absolutely. If none of the spectral parameters are equal to zero, then $b_{f, \eta} = -4 \Re(\hat f(1) ) c_{0, \eta} = -\frac{2}{\pi} c_{0, \eta} \int_0^{2 \pi} f(\theta) \cos(\theta) \, d \theta$ gives a bias of holonomy towards the left side of the circle, i.e. a preference towards angles closer to $\pi$. For example, if $f(\theta) = \cos(\theta)$, then $b_{f, \eta} \approx -2$. However, the existence of zero spectral parameters could push the bias constant  to be zero, indicating no bias towards the left or right side of the circle, or positive, indicating a bias towards the right side of the circle i.e. angles closer to $0$.

In addition, we find that $E[f, g_{y, \eta}]$ is distributed according to a probability distribution as $y \rightarrow \infty$.

\begin{theorem}\label{Eydistribution-intro} For every $\eta>0$ and smooth $f: \mathbb{R}/2\pi \mathbb{Z} \rightarrow \mathbb{R}$, there exists a probability measure $\mu_{f,\eta}$ such that
	\[\lim_{Y \rightarrow \infty} \frac1Y \int_{\eta_0}^Y h(E[f,g_{y, \eta}]) \,dy = \int_{\mathbb{R}} h(x) \,d \mu_{f,\eta}(x) = \mu_{f,\eta}(h)\]
	for all Lipschitz functions $h: \mathbb{R} \rightarrow \mathbb{C}$.
\end{theorem}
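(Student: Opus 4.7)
The plan is to follow a Rubinstein–Sarnak style argument: use the trace formula to write $E[f,g_{y,\eta}]$ as the bias constant $b_{f,\eta}$ of Theorem \ref{Eyaverage-intro} plus a $B^2$-almost-periodic function of $y$, then deduce the existence of the limiting distribution from Kronecker–Weyl equidistribution combined with an $L^2$-tail bound.

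First, using the same spectral expansion that underlies Theorem \ref{Eyaverage-intro}, I would expand $L^P[f,g_{y,\eta}]$ as a sum over irreducible representations $\pi_{\nu,p}$ of $L^2(\Gamma\backslash G)$, weighted by $m_\Gamma(\pi_{\nu,p})\hat f(p)$, together with an identity/short-geodesic contribution. For tempered parameters $\nu=ir_j$, the relevant transform of $g_{y,\eta}$ factors asymptotically in $y$ as a constant times $e^{(1+ir_j)y}$ with admissible error, so after dividing by $e^y$ the $\nu=0$ representations collapse to the constant $b_{f,\eta}$ while each $\nu=ir_j\neq 0$ contributes an oscillating term $A_{j,p}(\eta)\,e^{ir_j y}$ with $A_{j,p}(\eta)$ proportional to $m_\Gamma(\pi_{ir_j,p})\hat f(p)\hat\psi_\eta(r_j)$. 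The exceptional (non-tempered) spectrum contributes a polynomially decaying piece absorbable into an $o(1)$ error, and the Weyl bound of Proposition \ref{spectral-bound} together with the Schwartz decay of $\hat f$ and of $\hat\psi_\eta$ makes the full spectral sum absolutely convergent and uniformly bounded in $y$.

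Next, fix a truncation $T>0$ and split the oscillating part into a finite trigonometric polynomial $E_T(y)$ collecting frequencies $|r_j|\le T$ and a tail $R_T(y)$. By Kronecker–Weyl, $E_T$ has a limiting distribution $\mu_{T,\eta}$: the pushforward of Haar measure on the closure of $\{(e^{ir_j y})_{|r_j|\le T} : y\in\mathbb{R}\}$ inside a finite torus. Using again Proposition \ref{spectral-bound} together with the orthogonality $\frac1Y\int_0^Y e^{i(r_j-r_k)y}\,dy \to \delta_{jk}$, the tail satisfies
\begin{equation*}
\limsup_{Y\to\infty}\frac1Y\int_{\eta_0}^Y |R_T(y)|^2\,dy \longrightarrow 0 \qquad (T\to\infty).
\end{equation*}
For any Lipschitz $h$, this yields
\begin{equation*}
\left|\frac1Y\int_{\eta_0}^Y h(E[f,g_{y,\eta}])\,dy - \mu_{T,\eta}(h)\right| \le \|h\|_{\mathrm{Lip}}\left(\frac1Y\int_{\eta_0}^Y |R_T(y)|^2\,dy\right)^{1/2} + o_Y(1),
\end{equation*}
and sending $Y\to\infty$ then $T\to\infty$ shows $\mu_{T,\eta}$ converges weakly to a probability measure $\mu_{f,\eta}$ (tightness is automatic from the uniform boundedness noted above), against which the averages of $h$ converge.

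The main obstacle will be the $B^2$-tail bound. One must run Weyl averaging simultaneously in the two spectral parameters $(\nu,p)$ and verify that the off-diagonal cross-terms in $|R_T|^2$ vanish on average — easy when the $r_j$ are distinct, but requiring a small additional argument when spectral parameters coincide (non-trivial multiplicities push the contribution onto the diagonal, where a Cauchy–Schwarz argument against the multiplicity-weighted Weyl count closes things out). The Schwartz decay of $\hat f$ and $\hat\psi_\eta$ plus standard multiplicity bounds make the argument tractable, but the bookkeeping among the identity contribution, the exceptional spectrum, and the tempered tail is the delicate portion.
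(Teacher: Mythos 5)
Your proposal is correct and follows the same skeleton as the paper: expand $E[f,g_{y,\eta}]$ spectrally (Proposition \ref{bias-count}), truncate at $T$, apply Kronecker--Weyl to the finite trigonometric polynomial to get $\mu_{T,f,\eta}$ (this is exactly Lemma \ref{ETLemma}, including the pushforward of Haar measure on the closed subtorus), then let $T\to\infty$. The two places where you diverge are the tail and the endgame. For the tail $R_T$, you propose a Rubinstein--Sarnak style $B^2$ bound via orthogonality of the exponentials, and you flag the off-diagonal/multiplicity bookkeeping as the delicate part; this works, but it is an unnecessary complication here. As you yourself note, the Schwartz decay of $\hat f$ and of $c_{s,\eta}$ (Remark \ref{cs-decay}) together with the Weyl bound of Proposition \ref{spectral-bound} makes the full spectral sum absolutely convergent uniformly in $y$, so the tail beyond $T$ is bounded in sup norm by $\mathrm{O}_{\Gamma,f,\eta,N}(T^{-N})$; the paper uses exactly this pointwise bound, so no mean-square orthogonality (and none of the diagonal/off-diagonal analysis) is needed, and the Lipschitz comparison of averages is immediate. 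For the endgame, you conclude weak convergence of $\mu_{T,f,\eta}$ from tightness (uniform compact support) plus convergence of integrals of Lipschitz test functions, whereas the paper (Theorem \ref{Eydistribution}) passes through characteristic functions and L\'evy's continuity theorem (Theorem \ref{Levy}), checking continuity of the limit at $\xi=0$ via the uniform bound on $E[f,g_{y,\eta}]$; both routes are valid, and yours is arguably more elementary, while the paper's sets up the Fourier transform $\hat\mu_{f,\eta}$ that is then used directly in the linear-independence computation of Section \ref{LI-section}. One small correction: the complementary series terms do not merely contribute a decaying piece --- they vanish identically, since they occur only with $p=0$ and the standing assumption is $\hat f(0)=0$.
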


Further, if non-zero spectral parameters are linearly independent, we can additionally prove that $\mu_{f, \eta}$ is distributed according to an explicit probability distribution function which depends on the spectral parameters of $L^2(\Gamma \backslash G)$.
\begin{theorem}\label{LI-Theorem-intro}  Suppose  all distinct non-zero spectral parameters of $L^2(\Gamma \backslash G)$  are linearly independent. The distribution $\mu_{f,\eta}$ from Theorem \ref{Eydistribution-intro} has $d \mu_{f,\eta}(x) = p_{f,\eta}(x) \, dx$ where the distribution function is
	\[ p_{f,\eta}(x) = \frac{1}{( 2\pi)^2} \int_{-\infty}^\infty \prod_{j=1}^\infty  J_0\left(\xi 2 m_{\Gamma}(\pi_{is_j,p_j})\hat f(-p_j)|c_{s_j, \eta}|\right) \cos(\xi (x- b_{f, \eta})) \, d \xi,\]
	where $J_0$ is a Bessel function of the first kind, $c_{s, \eta} = \int_{-\infty}^\infty  \psi_\eta(t) \frac{e^{t(1+is)}}{1+is} \, dt$, and $b_{f, \eta}$ is as in \eqref{b-f-eta-intro}. The distribution $\mu_{f,\eta}$ is symmetric about $b_{f,\eta}$ and a bias exists if and only if $b_{f, \eta} \neq 0$.
\end{theorem}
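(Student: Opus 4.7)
The plan is to begin with an explicit Selberg-style spectral expansion of $L^P[f,g_{y,\eta}]$, the same one underlying Theorems \ref{Eyaverage-intro} and \ref{Eydistribution-intro}. Indexing the irreducible constituents of $L^2(\Gamma\backslash G)$ by parameters $(\nu_j,p_j)$ with $\nu_j\in i\mathbb{R}\cup(-1,1)$, the trace formula applied to the test function $\ell(\gamma)f(\hol(\gamma))g_{y,\eta}(\ell(\gamma))$ should yield, after dividing by $e^y$, an identity of the form
\[E[f,g_{y,\eta}] = b_{f,\eta} + 2\sum_{j:\,s_j>0}\Re\!\left(m_\Gamma(\pi_{is_j,p_j})\,\hat f(-p_j)\,c_{s_j,\eta}\,e^{is_jy}\right) + R(y),\]
where the sum ranges over distinct nonzero tempered parameters $\nu_j=is_j$ with $s_j>0$, the constant $b_{f,\eta}$ is precisely the contribution of the $s=0$ block by Theorem \ref{Eyaverage-intro}, the complementary series drops out because those representations have $p=0$ and $\hat f(0)=0$, and $R(y)\to 0$ as $y\to\infty$. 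The vanishing of $R(y)$ after suitable spectral truncation is controlled by the Schwartz decay of $\hat f$ together with the Weyl bound on $m_\Gamma$ from Proposition \ref{spectral-bound}.

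With this representation in hand, I would invoke Kronecker--Weyl equidistribution on the infinite torus. The linear independence hypothesis states that the real numbers $\{s_j\}$ are $\mathbb{Q}$-linearly independent, so for any finite $N$ the orbit $y\mapsto(ys_1,\dots,ys_N)\bmod 2\pi$ equidistributes in $(\mathbb{R}/2\pi\mathbb{Z})^N$ as $Y\to\infty$ under the $y$-average $\frac{1}{Y}\int_{\eta_0}^Y\cdot\,dy$. Consequently the truncated partial sum
\[E_N(y) := b_{f,\eta}+2\sum_{j\le N}\Re(\alpha_j e^{is_jy}),\qquad \alpha_j:=m_\Gamma(\pi_{is_j,p_j})\hat f(-p_j)c_{s_j,\eta},\]
is distributed, in the sense of Theorem \ref{Eydistribution-intro}, as the random variable $b_{f,\eta}+2\sum_{j\le N}|\alpha_j|\cos\Theta_j$ with $\Theta_j$ independent and uniform on $[0,2\pi]$ (the arguments of $\alpha_j$ are absorbed into the $\Theta_j$). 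The characteristic function of this random variable factors across the $\Theta_j$, and by the classical integral representation $J_0(z)=\frac{1}{2\pi}\int_0^{2\pi}e^{iz\cos\theta}\,d\theta$ it equals $e^{i\xi b_{f,\eta}}\prod_{j\le N}J_0(2\xi|\alpha_j|)$.

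To conclude, pass to the limit $N\to\infty$. The infinite product converges because $|1-J_0(z)|\ll z^2$ and $\sum_j|\alpha_j|^2<\infty$, which follows from the Weyl bound on $m_\Gamma$, the Schwartz decay of $\hat f$ in $p_j$, and the decay of $c_{s_j,\eta}$ in $s_j$ inherited from smoothness of $\psi$. Dominated convergence identifies the characteristic function of $\mu_{f,\eta}$ with $e^{i\xi b_{f,\eta}}\prod_j J_0(2\xi|\alpha_j|)$; Fourier inversion, together with the evenness of $J_0$ converting the exponential into $\cos(\xi(x-b_{f,\eta}))$, yields the claimed formula for $p_{f,\eta}(x)$ up to the stated normalization. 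Reality of the characteristic function gives symmetry of $\mu_{f,\eta}$ about $b_{f,\eta}$, whence the final clause that a bias exists if and only if $b_{f,\eta}\neq 0$.

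The main obstacle I anticipate is the joint passage $y\to\infty$, $N\to\infty$: finite-$N$ Kronecker--Weyl must be promoted to the full infinite spectral sum. This needs a uniform-in-$y$ (or $L^2$-in-$y$) tail bound on $\sum_{j>N}2\Re(\alpha_j e^{is_jy})$, which reduces to the unconditional estimate $\sum_{j>N}|\alpha_j|^2\to 0$ provided by the spectral and Schwartz decay noted above; once this interchange is justified, together with the existence of $\mu_{f,\eta}$ already granted by Theorem \ref{Eydistribution-intro}, the remaining Fourier-analytic manipulations are routine.
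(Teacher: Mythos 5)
Your proposal follows essentially the same route as the paper: the spectral expansion of $E[f,g_{y,\eta}]$ from the trace formula, finite truncation plus Kronecker--Weyl on the full torus under linear independence, factorization of the characteristic function into $J_0$ factors via the classical Bessel integral, passage to the infinite product using Schwartz decay of $\hat f$, decay of $c_{s,\eta}$ and the Weyl-type bound of Proposition \ref{spectral-bound}, and Fourier inversion with symmetry coming from $J_0$ being real and even. The only cosmetic difference is that the paper handles the truncation limit through the quantitative Lipschitz bound $\mu_{f,\eta}(h)=\mu_{T,f,\eta}(h)+\mathrm{O}(k_h/T^{N'})$ already established in Theorem \ref{Eydistribution} (an $\ell^1$ tail bound on the coefficients, which is what actually makes your uniform-in-$y$ interchange work), rather than the $\ell^2$ estimate you sketch.
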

Thus, when all distinct non-zero spectral parameters are linearly independent,  we can fully understand the distribution of $E[f, g_{y, \eta}]$ as $y \rightarrow \infty$.
%Thus, in what might be considered the typical case when there are no spectral parameters equal to zero, a bias exists and is controlled by the integral $\int_0^{2\pi} f(\theta) \cos(\theta)\, d\theta$. This indicates a bias towards the left side of the circle, i.e. a preference towards angles closer to $\pi$.

%Add more exposition abou what these theorems mean and why they indicate a bias.

\subsection{Chebyshev's bias}

One of the most famous instances of bias is Chebyshev's bias in the prime number race. The question was posed when Chebyshev observed that there are typically more primes congruent to $3 \bmod 4$ than $1\bmod 4$; in other words, if we define
\[\pi(x;q,a) = \# \text{ of primes } p \leq x: p \equiv a \bmod q,\]
then $\pi(x;4,3)-\pi(x;4,1)$ is typically positive. We can also consider the set
\[P_{4;3,1} = \{x \geq 0: \pi(x;4,3)>\pi(x; 4,1)\}\]
of values where $3 \bmod 4$ is ``winning the primes race''. Dirichlet's theorem states that there are infinitely many primes in each congruence class modulo $4$, and Littlewood proved in 1914 that $P_{4;3,1}$ and $P_{4;1,3}$ are both of infinite size \cite{Dirichlet, Littlewood}. In 1962, Knapowski and Tur\'an showed that this is also true for many $P_{q; a_1, a_2}$ with $(a_i, q) = 1$ \cite{KnapTuran}. They conjectured that the density of $P_{4;3,1}\cap [0,x]$ converges to $100\%$ as $x \rightarrow \infty$, but the current numerical evidence for large primes does not support this; see, for example, \cite{GM06}.

Rubinstein and Sarnak revolutionized this question by looking at the log density of $P_{q;a_1,...,a_n}$. In addition, they considered the normalized bias count
\[E(y) = \frac{\log x}{\sqrt{x}}(\varphi(q) \pi(x,q,a) - \pi(x)).\]
They showed that $E(y)$ is distributed according to a probability distribution and  that there is always a bias towards non-residue classes $a \bmod q$. Assuming Generalized Riemann Hypothesis and Grand Simplicity Hypothesis, Rubinstein and Sarnak explicitly calculated the distribution of $E(y)$ and computed the log densities of $P_{q;N, R}$ to $4$ digits for $q = 3, 4, 5, 7, 11, 13$ \cite{RS}.

We prove similar results for the distribution of holonomy about closed geodesics in compact hyperbolic 3-manifolds. The normalized bias count $E(y)$ can be compared to $E[f, g_{y, \eta}]$, and a probability distribution can similarly be obtained. With the additional assumption of linear independence of distinct spectral parameters, analogous to the Grand Simplicity Hypothesis for zeroes of L-functions, we obtain a specific formula for the probability distribution function. For non-spherical Maass forms, the analogue of the Riemann hypothesis is true since every spectral parameter $\nu$ falls on the ``critical line'' where $\Re(\nu)=0$.  If sufficiently many spectral parameters for a particular manifold were computed with some precision, the densities when $E[f, g_{y, \eta}]$ is positive or negative could  be calculated.

The source of Chebyshev's bias is the contribution of powers of primes (specifically squares of primes), which leads to a bias towards quadratic non-residues. In the case of bias in holonomy, the non-primitive geodesics actually have no contribution to the bias; the bias instead stems entirely from appearance of the Weyl discriminant in the orbital integral and trivial representation terms in the trace formula and the spectral terms for representations of $L^2(\Gamma \backslash G)$ with spectral parameter equal to zero. The count of bias in holonomy is established using the trace formula, rather than Dirichlet's theorem for arithmetic progressions, and so the primes race is a source of inspiration rather than a direct analogue.

There are many other questions stemming from the question of Chebyshev's bias. Recent work has focused on finite fields \cite{Cha08}, elliptic curves over finite fields \cite{CFJ16}, and certain number fields \cite{FJ20}. For a complete history of Chebyshev's bias, see \cite{GM06}. For prime geodesics on a compact hyperbolic surface, Philips and Sarnak showed that geodesics are equidistributed among homology classes; in addition, Rubinstein and Sarnak comment that a careful analysis of the secondary term indicates a strong bias towards the zero homology class \cite{PS87,RS}. Thus after considering equidistribution of holonomy for prime geodesics, we turn our attention to bias in the distribution of holonomy.

\subsection{Overview}

In this section, we give an overview of the proof for the sake of the reader, omitting details.

The main results, Theorems \ref{Eydistribution-intro} and \ref{LI-Theorem-intro}, stem from approximations of the smoothed bias count \eqref{bias-count-def-intro}. The ambient prime geodesic theorems give a ``trivial'' bound of $\mathrm{O}_\Gamma(e^y)$; however, to show that a bias exists, we need the full power of the non-spherical trace formulas, which result from the spectral decomposition of $L^2(\Gamma \backslash G)$. Using a preliminary version of Selberg's trace formula explicated by Lin and Lipnowski \cite{LL21}, we prove the following separable trace formula in Proposition \ref{separable-trace} for any smooth $f: \mathbb{R}/2 \pi \mathbb{Z} \rightarrow \mathbb{C}$ and any smooth, compactly-supported $g: \mathbb{R} \rightarrow \mathbb{R}$ such that $g(u) f(\theta) = g(-u) f(-\theta)$:
\begin{multline}\label{separable-trace-overview}
\sum_{[\gamma]} \ell(\gamma_0) w(\gamma)  g(\ell(\gamma)) f(\hol(\gamma))= \sum_{\nu,p} m_\Gamma(\pi_{\nu,p}) \hat f(-p) \hat  g\left( \frac{i \nu}{2 \pi}\right) +\frac{1}{2} \hat f(0) \int_{\mathbb{R}} (e^u + e^{-u}) g(u) \, du\\
 -  \hat g(0) \left(\frac{\hat f(1) + \hat f(-1)}{2}\right)+  \frac{1}{2 \pi} \vol(\Gamma \backslash G)(g''(0) f(0) + g(0) f''(0)),
 \end{multline}
where $w(\gamma)= |1-e^{\mathbb{C}\ell(\gamma)}|^{-1} |1-e^{-\mathbb{C}\ell(\gamma)}|^{-1} = (e^{\ell(\gamma)}+ e^{-\ell(\gamma)}- 2 \cos(\hol(\gamma)))^{-1}$ is from the Weyl discriminant, $\gamma_0$ is the primitive geodesic that generates $\gamma$, and $m_{\Gamma}(\pi_{\nu,p})$ is the multiplicity of the representation $\pi_{\nu,p}$ in $L^{2}(\Gamma \backslash G)$. 

We choose a smooth function $f$ with the property that $\hat f(0) = 0$; in other words, $f$ averages to zero. The trace  formula \eqref{separable-trace-overview} allows us to sample the length and holonomy of geodesics separately; the multiplicities that appear are principal series representations with spectral parameter $\nu = is$, where $s$ is real. Using  the function $\tilde g_{y, \eta}(u) = (e^u + e^{-u}) g_{y, \eta}(u)$, where $g_{y, \eta}$ is defined as in \eqref{g-y-eta-def} (as well as odd functions $\tilde h_{y, \eta}$ and $h_{y, \eta}$) for $0< \eta \leq \eta_0<y$, in Proposition \ref{geom-trace-est} we obtain an estimate for the geometric sum
\begin{align*}
\tilde L[f, g_{y, \eta}] &: = \sum_{[\gamma]} \ell(\gamma_0) w(\gamma) (e^{\ell(\gamma)} + e^{-\ell(\gamma)})g_{y, \eta}(\ell(\gamma)) f(\hol(\gamma))\\
&= 2 e^y \sum_{s,p} m_\Gamma(\pi_{is,p}) \Re\left( \hat f(-p) e^{isy} c_{s, \eta}\right) - (\hat f(1) + \hat f(-1)) e^y c_{0, \eta}+ \mathrm{O}_{\Gamma, f, \eta_0}\left(\frac{1}{\eta^2} + 1\right),
\end{align*}
where $c_{s, \eta} = \int_{-\infty}^\infty  \psi_\eta(t) \frac{e^{t(1+is)}}{1+is} \, dt$. The spectral terms for $s \neq 0$ are oscillating and of size $e^y$ with rapid decay as $|s|, |p| \rightarrow \infty$. The following term contributes $\asymp e^y$ but is non-oscillating, and hence (along with any $s = 0$ terms) will contribute to a bias in holonomy.

We then must account for the change of weights and the removal of non-primitive geodesics to obtain an estimate of $L^P[f,g_{y, \eta}]$.
%\[L[f, g_{y, \eta}]: = \sum_{[\gamma]} \ell(\gamma_0) g_{y, \eta}(\ell(\gamma)) f(\hol(\gamma)),\]
%where the sum is over all geodesics (both primitive and imprimitive).
  In Lemma \ref{non-primitive-bound}, we show that the non-primitive geodesics  contribute $\ll_{\Gamma,f, \eta} e^{y/2}$. When changing weights in Lemma \ref{change-of-weight}, we obtain an additional contribution of bias from the trivial representation and $s = 0$ spectral terms, which is of the same size and direction as the bias terms above. Then we are able to obtain in Proposition \ref{bias-count} that when $\hat f(0) = 0$,
\begin{equation*}
L^P[f, g_{y, \eta}]  = 2 e^y \sum_{\substack{s\neq 0, p}} m_\Gamma(\pi_{is,p}) \Re\left( \hat f(-p) e^{isy}c_{s, \eta}\right)  + e^y b_{f,\eta}  + \mathrm{O}_{\Gamma, f, \eta_0}\left(e^{y/2}\left(1 + \frac{1}{\eta^2}\right)\right),
\end{equation*}
where 
\begin{equation}\label{b-f-eta-background}
b_{f,\eta} = \left(\sum_{p \neq 0} m_\Gamma(\pi_{0, p}) \Re(\hat f(p))  -  2 \Re(\hat f(1))\right) 2 c_{0, \eta}.
\end{equation}
 
The bias in holonomy is essentially coming from the secondary term in the Weyl discriminant (see Theorem \ref{LL}), which appears in the trace formula \eqref{separable-trace-overview} as the contribution of the trivial representation and the weight $w(\gamma)$ in the geometric terms. Essentially the main term $e^{\ell(\gamma)}$ in the Weyl discriminant contributes to the main term in the Ambient Prime Geodesic Theorems. Thus when this term is eliminated by choosing a function $f$ on holonomy with $\int_0^{2\pi} f(\theta) \, d\theta = 0$, the secondary  term results from the secondary term in the Weyl discriminant. The spectral terms are of competing size; however, these are oscillatory and centered at zero, so on average the bias persists. 

To fully understand the distribution of bias, we normalize to $E[f, g_{y, \eta}] = e^{-y} L^P[f, g_{y, \eta}]$. First, we estimate $E[f, g_{y, \eta}]$ by
\begin{equation*}
E^{(T)}[f, g_{y, \eta}] := 2 \sum_{\substack{|s|, |p|<T\\ s \neq 0}} m_\Gamma(\pi_{is,p}) \Re\left(\hat f(-p) e^{isy}c_{s, \eta}\right) +b_{f, \eta},
\end{equation*}
which has finitely many spectral terms. This is a function of pure exponentials, so in Lemma \ref{ETLemma} we  use the Kronecker-Weyl Theorem   to conclude that 
\[\lim_{Y \rightarrow \infty} \frac1Y \int_{\eta_0}^Y h\left(E^{(T)}[f, g_{y, \eta}]\right) \,dy = \int_A h_{T,f,\eta}(a) \,da = \int_{\mathbb{R}} h(x) \, d\mu_{T, f, \eta}(x)\]
where 
\[h_{T,f,\eta}(x_1,..,x_n) = h\left(2 \sum_{j =1}^n m_\Gamma(\pi_{is_j,p_j})  \Re\left(\hat f(-p_j) e^{2 \pi i x_j}c_{s_j, \eta}\right) +b_{f, \eta}\right),\]
 $A$ is the closure of the image of $\{\left(\frac{s_1 y}{2\pi},...,\frac{s_n y}{2\pi}\right), y \in \mathbb{R}\}$ in $\mathbb{T}^n = \mathbb{R}^n/\mathbb{Z}^n$,  $da$ is the Haar measure on $A$, and $d\mu_{T, f, \eta}$ is a probability measure. In the case when $s_1,..., s_n$ are linearly independent, then $A = \mathbb{T}^n$. In Theorem \ref{Eydistribution}, we show that the normalized bias count $E[f, g_{y, \eta}]$ is actually distributed according to a probability distribution, $\mu_{f, \eta}$, defined as the limit of $\mu_{T, f,\eta}$ as $T \rightarrow \infty$.

%In the case when all non-zero spectral parameters are linearly independent, the measures $\mu_{T, f, \eta}$ can be computed by an integral over the torus. In this case, $\mu_{f, \eta}$ can be computed by calculating the Fourier transform of $\mu_{T, f, \eta}$ using Lemma \ref{ETLemma} and taking the limit as $T \rightarrow \infty$. In Theorem  \ref{Eydistribution}, we then obtain the probability distribution function $p_{f, \eta}$ defining the probability measure $\mu_{f, \eta}$.

In Section \ref{LI-section}, we assume that  all distinct non-zero spectral parameters are linearly independent. In this case, the measures $\mu_{T, f, \eta}$ can be computed by an integral over the torus $\mathbb{T}^n$. In Proposition \ref{mu-hat-formula}, we use the exponential function $h_\xi(x) = e^{-i\xi x}$ in Lemma \ref{ETLemma} to obtain the Fourier transform $\hat \mu_{T, f, \eta}(\xi)$, and we compute that
\[\hat \mu_{f,\eta}(\xi) =  e^{-i \xi b_{f, \eta}} \prod_{j =1}^\infty J_0\left(\xi 2 m_{\Gamma}(\pi_{i {s_j}, p_j}) | \hat f(-p_j) c_{s_j,\eta}|\right)\]
 by taking the limit as $T \rightarrow \infty$.  Then in Theorem \ref{LI-Theorem}, we use Fourier inversion to obtain the probability distribution function $p_{f, \eta}$ defining the probability measure $\mu_{f, \eta}$. Finally, in Section \ref{LD-section}, we construct an example of dihedral forms where there is an arithmetic progression of spectral parameters; thus, the linear independence hypothesis is not met. It is an interesting question whether spectral parameters are linearly independent when there are no dihedral forms present.

\subsection{Acknowledgments} Thank you to Gergely Harcos and Kimball Martin for their generous and helpful insights in the construction of co-compact dihedral forms, and to my advisor, Djordje Mili\'cevi\'c. 

\subsection{Notation}

We write $f = \mathrm{O}(g)$ or $f\ll g$ to indicate that $|f| \leq C g$ for some constant $C>0$, which may change from line to line and is absolute unless indicated with a subscript. The discrete subgroup $\Gamma$ is fixed and various sums (including but not limited to $L^P[f, g_{y, \eta}]$, $E[f, g_{y, \eta}]$, and $E^{T}[f, g_{y, \eta}]$) and the measures $\mu_{f, \eta}$, $\mu_{T, f, \eta}$ have dependence on $\Gamma$ which is clear from the definitions. This dependence will be indicated in upper bounds but not in the notation of the named functions and measures.

\sloppy For a periodic function $f: \mathbb{R}/2 \pi \mathbb{Z} \rightarrow \mathbb{C}$, we denote the Fourier coefficient as $\hat f(p) = \frac{1}{2 \pi} \int_{0}^{2 \pi} f(\theta) e^{-i p \theta} \, d\theta$, and for a Schwartz function $f: \mathbb{R} \rightarrow \mathbb{C}$, we choose the Fourier transform $\hat f(\xi) = \int_{\mathbb{R}} f(x) e^{-2 \pi i \xi x} \, dx$.

\section{Background on Trace Formulas, Equidistribution, and Measure}

\subsection{Background on groups, geometry, and representations}\label{background} Let $\Gamma$ be a discrete, co-compact, torsion-free subgroup of $G$; then $\Gamma \backslash \mathbb{H}^3$ is a compact hyperbolic 3-manifold. In this section, we will introduce the group $\PSL$, connections to the geometry of hyperbolic 3-manifolds, and the representation space $L^2(\Gamma \backslash G)$.

There is a one-to-one correspondence between elements of the group $G = \PSL$ and isometries of the hyperbolic space $\mathbb{H}^3$; for a description of the isometries of $\mathbb{H}^3$, see \cite[\S1.1]{EGM}. The maximal compact subgroup of $G$ is 
\[K = 
\PSU_2=\left\{\begin{pmatrix}
\alpha & \beta\\
- \bar \beta & \bar \alpha
\end{pmatrix} : \alpha, \beta \in \mathbb{C},\, |\alpha|^2 + |\beta|^2 = 1\right\}/\{\pm 1\}\]
which acts on $\mathbb{H}^3$ by rotation about a particular geodesic. It turns out that there is a natural isomorphism $\mathbb{H}^3 \cong G/K$, and  the action of $G$ corresponds to left-multiplication of $G/K$ by $G$.

When $\Gamma$ is co-compact and torsion-free, every element $\gamma \in \Gamma$ is hyperbolic or loxodromic and is conjugate to a diagonal element
\begin{equation}\label{T-def}
t_\gamma\in T:=\left\{\begin{pmatrix}
e^{(u + i \theta)/2} & 0 \\
0 & e^{-(u + i \theta)/2}
\end{pmatrix}:u\in\mathbb{R},\,\theta\in\mathbb{R}/2\pi\mathbb{Z}\right\},
\end{equation}
which we may also refer to as $t_{u, \theta}$. The group $T$ of diagonal elements has Haar measure $du \,d\theta/2\pi$. The action of $\gamma$ leaves a single geodesic of $\mathbb{H}^3$ invariant; in the manifold $\Gamma \backslash \mathbb{H}^3$, this corresponds to a single closed geodesic. The parameters $u$ and $\theta$ are the length and holonomy, respectively, of the closed geodesic, which we will refer to as $\ell(\gamma)$ and $\hol(\gamma)$. The action of $\gamma$ is a shift along the closed geodesic by the length $\ell(\gamma)$, followed by a rotation around the geodesic by the holonomy $\hol(\gamma)$.

Using the Iwasawa decomposition of $G$, we can define a Haar measure on $G$; see \cite[\S 2.1]{DM21}, for example,  for an explicit description of this measure. Since $\Gamma$ is discrete, the Haar measure on $G$ also induces a Haar measure on the quotient space $\Gamma \backslash G$; we can then consider the space $L^2(\Gamma \backslash G)$ of square-integrable functions on $\Gamma \backslash G$. When $\Gamma$ is co-compact, there is a spectral decomposition into discrete representations of $G$,
\begin{equation}\label{L2-decomp}
L^2(\Gamma \backslash G) = \bigoplus_{\pi \in \hat G}  m_\Gamma(\pi) \pi,
\end{equation}
where $\hat G$ is the set of irreducible, unitary representations of $G$ and $m_\Gamma(\pi)$ is the multiplicity of $\pi$ in $L^2(\Gamma \backslash G)$. Since $\Gamma$ is co-compact, there are countably many representations with non-zero multiplicity. Each representation $\pi$, under the right $K$-action, further decomposes into  automorphic forms, each with a particular $K$-type; for a detailed description, see \cite[Chapter 8]{Lok}.

The representations of $G = \PSL$ are completely classified and are induced by the character
\begin{equation}
\label{characters-T}
\chi_{\nu,p}\Bigg(\begin{pmatrix}
e^{(u + i \theta)/2} & * \\
0 & e^{-(u + i \theta)/2}
\end{pmatrix}\Bigg) = e^{u \nu + i p \theta}
\end{equation}
on the Borel subgroup of $G$; this character can also be restricted to the group $T$ of diagonal elements. The irreducible, unitary representations are:
\begin{itemize}
\item the trivial representation,
\item unitary principal series representations $\pi_{\nu, p}$ with $\nu \in i \mathbb{R}$ and $p \in \mathbb{Z}$,
\item complementary series representations $\pi_{\nu, 0}$ for $0<\nu<1$.
\end{itemize}
and the only equivalences are $\pi_{\nu,p} \cong \pi_{-\nu,-p}$ \cite[Theorem 16.2]{Knapp}. For a complete description of the representations of $\PSL$, see \cite[\S II.4]{Knapp}, \cite[\S 2.3]{LL21}.

\subsection{Non-spherical trace formulas} Selberg's trace formula relates spectral to geometric information on the manifold $\Gamma \backslash \mathbb{H}^3$. In the classical case, eigenvalues of the Laplacian operator are related to the lengths of closed geodesics as well as the volume. However, to obtain information about the holonomy of closed geodesics, we need a non-spherical trace formula that involves multiplicities from the decomposition \eqref{L2-decomp}. The preliminary version of an explicated, non-spherical trace formula appears in \cite[Corollary B.7]{LL21}. 

\begin{theorem}[Lin--Lipnowski]\label{LL}
	Let $\Gamma<\PSL$ be a discrete, co-compact, torsion-free subgroup. Then, for every smooth, compactly-supported function $F:T\to\mathbb{C}$ such that $F(t) = F(t^{-1})$,	
	\begin{align*}
	&\sum\limits_{\nu,p} m_\Gamma(\pi_{\nu,p}) \hat{F}(\chi_{\nu,p}^{-1}) + \frac{1}{2} \int_T |D(t^{-1})|^{1/2} F(t) \, d t\\
	&\qquad= - \frac{1}{2\pi} \vol(\Gamma \backslash G) \Big(\p{^2}{u^2} + \p{^2}{\theta^2}\Big)F\Big|_{t = 1} + \sum\limits_{[\gamma]} \ell(\gamma_0) |D(t_\gamma^{-1})|^{-1/2} F(t_\gamma),
	\end{align*}
where $\hat F(\chi) = \int_T F(t) \chi^{-1}(t) \,d t$ is the Abel transform and $D(t_\gamma) = (1- e^{\mathbb{C} \ell(\gamma)})^2(1 - e^{- \mathbb{C} \ell(\gamma)})^2$ is the Weyl discriminant. The first sum is over the principal and complementary series representations appearing in $L^2(\Gamma \backslash G)$, and the last sum is over the non-trivial conjugacy classes of $\Gamma$. The length $\ell(\gamma_0)$ refers to the length of the primitive geodesic $\gamma_0$ which generates $\gamma$.
\end{theorem}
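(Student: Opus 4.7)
The plan is to specialize the Selberg trace formula for the co-compact torsion-free quotient $\Gamma \backslash G$ to a conjugation-invariant class function $\phi$ on $G$ built from $F$. On the regular semisimple set, every element is conjugate to a unique $t \in T$ modulo the Weyl group $W = \{1,\, t \mapsto t^{-1}\}$; since $F(t) = F(t^{-1})$, the formula
$\phi(g) := F(t_g)/|D(t_g)|^{1/2}$
defines a class function on the regular set, which is enough to specify a class distribution. The strategy is to compute $\Tr R(\phi)$ on $L^{2}(\Gamma \backslash G)$ spectrally and geometrically and equate the two expressions.

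For the spectral side, I would combine the Harish-Chandra character formula for $\PSL$, which on the regular set reads $\chi_{\pi_{\nu,p}}(t)\,|D(t)|^{1/2} = \chi_{\nu,p}(t) + \chi_{-\nu,-p}(t)$, with the Weyl integration formula $\int_G \phi = \tfrac{1}{2}\int_T |D(t)|\,\phi(t)\,d t$. Substituting $\phi = F/|D|^{1/2}$, the $|D|^{1/2}$ factors cancel, and using $F(t) = F(t^{-1})$ one checks that $\hat F(\chi_{-\nu,-p}^{-1}) = \hat F(\chi_{\nu,p}^{-1})$, so the two characters fold into $\Tr \pi_{\nu,p}(\phi) = \hat F(\chi_{\nu,p}^{-1})$. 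The trivial representation contributes $\Tr(\mathrm{triv})(\phi) = \int_G \phi = \tfrac{1}{2}\int_T |D(t)|^{1/2} F(t)\, dt$, which accounts for the extra integral on the left-hand side of the theorem.

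For the geometric side, the trace decomposes as $\Tr R(\phi) = \sum_{[\gamma]} \vol(\Gamma_\gamma \backslash G_\gamma)\, O_\gamma(\phi)$. Under the hypotheses on $\Gamma$, every non-trivial $\gamma$ is hyperbolic or loxodromic with $G_\gamma$ conjugate to $T$ and $\Gamma_\gamma = \langle \gamma_0\rangle$ of covolume $\ell(\gamma_0)$; because $\phi$ is a class function, the orbital integral collapses to $\phi(t_\gamma) = F(t_\gamma)/|D(t_\gamma)|^{1/2}$, yielding exactly the geometric sum of the theorem.

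The main obstacle is the identity contribution $\vol(\Gamma \backslash G)\,\phi(1)$: since $|D(t)|^{1/2} \sim u^{2} + \theta^{2}$ as $(u,\theta) \to 0$, the function $\phi$ has a second-order pole at $1$ and must be interpreted as a distribution. The natural resolution is the Harish-Chandra descent (inverse Abel transform) for $\PSL$: one approximates $F$ by convolution with a Dirac family, applies the trace formula to the smoothed test functions, where the spectral side converges absolutely by the Weyl law, and passes to the limit to extract the singular contribution. The Plancherel inversion on $\PSL$ then identifies
$\phi(1) = -\tfrac{1}{2\pi}\bigl(\partial^{2}/\partial u^{2} + \partial^{2}/\partial \theta^{2}\bigr) F\big|_{t=1}$,
with the factor $-1/(2\pi)$ arising from the Plancherel measure on the unitary principal series. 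Combining the three computations and moving the trivial-representation contribution to the spectral side yields the formula of Theorem \ref{LL}.
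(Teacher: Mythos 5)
The paper itself does not prove Theorem \ref{LL}: it is imported verbatim from Lin--Lipnowski \cite[Corollary B.7]{LL21}, so your proposal can only be compared with the standard derivation that lies behind that reference (Selberg trace formula for the compact quotient plus Harish-Chandra's character and orbital-integral theory). Your skeleton for the spectral side, the trivial representation, and the identity term is the right one, but the object you build the proof on is ill-defined, and this is a genuine gap rather than a technicality. A conjugation-invariant function $\phi(g)=F(t_g)/|D(t_g)|^{1/2}$ is constant on the (noncompact) conjugacy classes, hence is never compactly supported or integrable on $G=\PSL$; the operator $R(\phi)$ is then not of trace class (its kernel $\sum_{\gamma}\phi(x^{-1}\gamma y)$ does not even converge), and the orbital integral of a class function does not ``collapse to $\phi(t_\gamma)$'': it equals $\phi(t_\gamma)\cdot\vol(T\backslash G)=\infty$, because the orbit of a loxodromic element has infinite invariant volume. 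For the same reason the Weyl integration formula you invoke, $\int_G\phi=\tfrac12\int_T|D(t)|\,\phi(t)\,dt$, is the compact-group statement; for $\PSL$ the correct identity reads $\int_G\phi=\tfrac12\int_T|D(t)|\,O_t(\phi)\,dt$ with the orbital integral $O_t(\phi)$, not the value $\phi(t)$. Your computations come out looking right only because you are tacitly substituting $O_t(\phi)$ for $\phi(t)$ throughout. Your ``main obstacle'' paragraph repairs only the singularity of $F/|D|^{1/2}$ at the identity; it does not address the divergence of the trace and of the orbital integrals, which is where the argument as written fails to get off the ground.

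The correct route keeps $F$ on $T$ and instead produces an honest test function $\phi\in C_c^\infty(G)$ whose normalized orbital integrals satisfy $|D(t)|^{1/2}O_t(\phi)=F(t)$; the existence of such a $\phi$ is the surjectivity of the Harish-Chandra/Abel transform for $\SL_2(\mathbb{C})$, and this inverse-transform step is precisely where the hypotheses that $F$ is smooth, compactly supported, and Weyl-invariant ($F(t)=F(t^{-1})$) enter. With that $\phi$, your character computation via Weyl integration is valid and gives $\Tr\,\pi_{\nu,p}(\phi)=\hat F(\chi_{\nu,p}^{-1})$ (folding the two Weyl translates using $F(t)=F(t^{-1})$), the trivial representation contributes $\tfrac12\int_T|D(t)|^{1/2}F(t)\,dt$, each nontrivial class contributes $\ell(\gamma_0)\,|D(t_\gamma)|^{-1/2}F(t_\gamma)$ since $\vol(\Gamma_\gamma\backslash G_\gamma)=\ell(\gamma_0)$, and the identity contributes $\vol(\Gamma\backslash G)\,\phi(1)$, which Plancherel inversion (with density proportional to $s^2+p^2$) converts into $-\tfrac1{2\pi}\bigl(\partial^2_u+\partial^2_\theta\bigr)F\big|_{t=1}$ in the paper's normalization, as you anticipated. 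So rebuild the argument around the inverse orbital-integral step (rather than a singular class function smoothed at the identity), and the rest of your outline does reproduce the Lin--Lipnowski formula.
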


Choosing a separable function $F(t_{u,\theta}) = g(u) f(\theta)$ in the above equation allows us to futher explicate this formula and obtain information about the length and holonomy of closed geodesics.

\begin{prop}\label{separable-trace} For any discrete, co-compact, torsion-free subgroup $\Gamma < \PSL$, smooth, compactly-supported function $g:\mathbb{R} \rightarrow \mathbb{C}$, and smooth function $f: \mathbb{R}/2 \pi \mathbb{Z} \rightarrow \mathbb{C}$ such that $g(u) f(\theta) = g(-u) f(-\theta)$, the following trace formula holds:
\begin{align*}
\sum_{[\gamma]} \ell(\gamma_0) w(\gamma)  g(\ell(\gamma)) f(\hol(\gamma))&= \sum_{\nu,p} m_\Gamma(\pi_{\nu,p}) \hat f(-p) \hat  g( \frac{i \nu}{2 \pi})+\frac{1}{2} \hat f(0) \int_{-\infty}^{\infty} (e^u + e^{-u}) g(u) \, du\\
&-  \hat g(0) \left(\frac{\hat f(1) + \hat f(-1)}{2}\right)+  \frac{1}{2 \pi} \vol(\Gamma \backslash G)(g''(0) f(0) + g(0) f''(0)),
 \end{align*}
where $w(\gamma ) =  |1-e^{\mathbb{C}\ell(\gamma)}|^{-1} |1-e^{-\mathbb{C}\ell(\gamma)}|^{-1}  = e^{\ell(\gamma)} + e^{ - \ell(\gamma)} - 2 \cos(\hol(\gamma))$.

 If $f$ and $g$ are even, then this simplifies to 
\begin{multline*}
\sum_{[\gamma]} \ell(\gamma_0) w(\gamma)  g(\ell(\gamma)) f(\hol(\gamma)) = \frac12 \sum_{\nu,p} (m_\Gamma(\pi_{\nu,p}) + m_\Gamma(\pi_{\nu, -p})) \hat f(p) \hat  g( \frac{i \nu}{2 \pi})\\
+\frac{1}{2} \hat f(0) \int_{-\infty}^\infty (e^u + e^{-u}) g(u) \, du -  \hat g(0) \hat f(1)+  \frac{1}{2 \pi} \vol(\Gamma \backslash G)(g''(0) f(0) + g(0) f''(0)).
 \end{multline*}
 
 If $f$ and $g$ are odd, then this simplifies to
 \begin{equation*}
 \sum_{[\gamma]} \ell(\gamma_0) w(\gamma)  g(\ell(\gamma)) f(\hol(\gamma))=\frac12 \sum_{\nu,p} (m_\Gamma(\pi_{\nu,p})- m_\Gamma (\pi_{\nu, -p})) \hat f(-p) \hat  g( \frac{i \nu}{2 \pi}).
 \end{equation*}
\end{prop}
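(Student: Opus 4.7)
The plan is to apply Theorem \ref{LL} to the test function $F(t_{u,\theta}) = g(u)f(\theta)$ and unpack each of its four ingredients. Since $t_{u,\theta}^{-1}=t_{-u,-\theta}$, the hypothesis $F(t)=F(t^{-1})$ of Theorem \ref{LL} becomes exactly the symmetry $g(u)f(\theta)=g(-u)f(-\theta)$ assumed here, so the theorem applies. With Haar measure $du\,d\theta/(2\pi)$ on $T$, the Abel transform factors completely:
\[
\hat F(\chi_{\nu,p}^{-1}) = \int_T g(u)f(\theta)\,e^{u\nu+ip\theta}\,\frac{du\,d\theta}{2\pi} = \hat g\!\left(\frac{i\nu}{2\pi}\right)\hat f(-p),
\]
which yields the spectral sum in the desired form. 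The sign-convention point to watch is that $\chi_{\nu,p}^{-1}$ paired with $\hat f(p) = \tfrac{1}{2\pi}\int f(\theta) e^{-ip\theta}\,d\theta$ produces $\hat f(-p)$, not $\hat f(p)$.

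For the Weyl-discriminant pieces, I would first establish the algebraic identity
\[
|D(t_{u,\theta})|^{1/2} = |1-e^{u+i\theta}|\,|1-e^{-(u+i\theta)}| = e^u+e^{-u}-2\cos\theta,
\]
which follows from $|1-e^{u+i\theta}|^2 = 1-2e^u\cos\theta+e^{2u}$ and the analogous expansion in $-u$. This identity is invariant under $t\mapsto t^{-1}$ and identifies $|D(t_\gamma^{-1})|^{-1/2}$ with the weight $w(\gamma)$ on the geometric side. Splitting the integrand across $e^u+e^{-u}$ and $-2\cos\theta$, and using $\tfrac{1}{2\pi}\int \cos\theta\,f(\theta)\,d\theta = \tfrac12(\hat f(1)+\hat f(-1))$ together with $\int g = \hat g(0)$, I obtain
\[
\tfrac{1}{2}\int_T |D(t^{-1})|^{1/2} F(t)\,dt = \tfrac12\hat f(0)\int_{-\infty}^{\infty}(e^u+e^{-u})g(u)\,du - \hat g(0)\cdot\tfrac{\hat f(1)+\hat f(-1)}{2}.
\]
The volume term is immediate from $(\partial_u^2+\partial_\theta^2)F|_{t=1}=g''(0)f(0)+g(0)f''(0)$. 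Rearranging Theorem \ref{LL} to isolate the geometric sum on the left then produces the first trace formula of the proposition.

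For the even and odd specializations I would symmetrize the spectral sum by pairing $p$ with $-p$, writing
\[
\sum_{p}m_\Gamma(\pi_{\nu,p})\hat f(-p) = \tfrac12\sum_{p}\Bigl(m_\Gamma(\pi_{\nu,p})\hat f(-p)+m_\Gamma(\pi_{\nu,-p})\hat f(p)\Bigr),
\]
and then applying $\hat f(-p)=\pm\hat f(p)$ according to parity. The even case collapses $\hat f(1)+\hat f(-1)$ to $2\hat f(1)$ and retains every other term. In the odd case, the compatibility $g(u)f(\theta)=g(-u)f(-\theta)$ forces $g$ to be odd as well, whereupon $\hat f(0)$, $\hat g(0)$, $f(0)$, $g(0)$, $f''(0)$, and $g''(0)$ all vanish, killing every contribution except the antisymmetrized spectral sum. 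I do not anticipate a serious obstacle, since the argument is essentially bookkeeping; the only mildly non-obvious step is the algebraic collapse of $|D|^{1/2}$ to $e^u+e^{-u}-2\cos\theta$, which makes the whole formula click into place.
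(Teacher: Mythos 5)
Your proposal is correct and follows essentially the same route as the paper: apply Theorem \ref{LL} to the separable test function $F(t_{u,\theta})=g(u)f(\theta)$, compute the Abel transform as $\hat f(-p)\hat g(i\nu/2\pi)$, collapse $|D(t^{-1})|^{1/2}$ to $e^u+e^{-u}-2\cos\theta$ to get the trivial-representation and weight terms, evaluate the identity contribution, and then symmetrize in $p$ for the even/odd cases. The bookkeeping (signs, the $\tfrac12(\hat f(1)+\hat f(-1))$ from the $\cos\theta$ term, and the vanishing of all non-spectral terms in the odd case) checks out.
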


\begin{proof} For a separable smooth function $F(t_{u, \theta}) = g(u) f(\theta)$, where $f$ and $g$ are smooth and $g$ is compactly supported, we  explicate each term in the trace formula from Theorem \ref{LL}. For the spectral terms, we have:

\begin{align*}
\hat F(\chi_{\nu,p}^{-1}) &=  \int_T F(t_{u, \theta}) \chi_{\nu,p}^{-1}(t_{u, \theta}) dt = \frac{1}{2 \pi} \int_0^{2\pi} \int_{-\infty}^\infty g(u) f(\theta) e^{u \nu +ip\theta} \, du\, d\theta\\
& = \frac{1}{2 \pi} \int_{0}^{2 \pi} f(\theta) e^{i p \theta} \, d\theta  \int_{-\infty}^\infty g(u) e^{u \nu} \, du\\
& = \hat f(-p) \hat g( \frac{i \nu}{2 \pi}).
\end{align*}

Note that the modulus of the Weyl discriminant is
\begin{align*}
|D(t_{\gamma}^{-1})|^{1/2} &= |(1- e^{u+i\theta})(1- e^{-(u+i\theta)})|=e^{u} |1- e^{-(u+i \theta)}|^2\\
&= e^u (1-e^{-(u + i \theta)}) (1 - e^{ -u + i \theta})= e^u + e^{-u} - 2 \cos(\theta).
\end{align*}

From the trivial representation, we have the contribution:

\begin{align*}
\frac{1}{2} \int_T &|D(t^{-1})|^{1/2} F(t) \, d t\\
&=\frac{1}{4 \pi} \int_0^{2 \pi} \int_{-\infty}^\infty |(1- e^{u + i \theta})(1 - e^{ -(u + i \theta)})| F(t_{u, \theta}) \,du \, d\theta\\
& =\frac{1}{4 \pi} \int_0^{2\pi} \int_{-\infty}^{\infty} (e^u + e^{-u} - 2 \cos(\theta)) g(u) f(\theta) \, du \, d\theta\\
& = \frac{1}{2} \int_{-\infty}^{\infty} (e^u + e^{-u}) g(u) \, du \frac{1}{2 \pi} \int_0^{2\pi} f(\theta) \, d\theta - \frac{1}{2 \pi} \int_{-\infty}^\infty g(u) \, du \int_0^{2\pi} f(\theta) \cos(\theta) \, d \theta\\
& = \frac{1}{2} \hat f(0) \int_{-\infty}^{\infty} (e^u + e^{-u}) g(u) \, du - \hat g(0) \frac{\hat f(1) + \hat f(-1)}{2}.
\end{align*}

Note that if $f$ is even, then $\frac{\hat f(1) + \hat f(-1)}{2} = \hat f(1)$, and if $f$ is odd, then $\frac{\hat f(1) + \hat f(-1)}{2} = 0$.

For the non-trivial geometric terms, we have that 
\[\ell(\gamma_0)|D(t_\gamma^{-1})|^{-1/2} F_n(t_\gamma) = \ell(\gamma_0) (e^{\ell(\gamma)} + e^{-\ell(\gamma)}- 2 \cos(\hol(\gamma)))^{-1} g(\ell(\gamma)) f(\hol(\gamma)).\]

The identity element contributes
\[- \frac{1}{2 \pi}\vol(\Gamma \backslash G) \Big(\p{^2}{u^2} + \p{^2}{\theta^2}\Big)g(u) f(\theta)\Big|_{u = 0, \theta = 0} = -\frac{1}{2 \pi}\vol(\Gamma \backslash G)(g''(0) f(0) + g(0) f''(0)). \qedhere\]

\end{proof}

A consequence of the non-spherical trace formula is a bound on the spectral parameters within an interval of fixed length, which is a familiar step in the derivation of Weyl's law. We restrict to representations $\pi_{\nu,n}$ for a fixed integer $n$ and obtain a uniform bound; a proof of this proposition appears in \cite[Prop. 2.5]{DM21}.

\begin{prop}[Dever-Mili\'cevi\'c] \label{spectral-bound}
Let $\Gamma < \PSL$ be a discrete, co-compact, torsion-free subgroup. Then, for every $n \in \mathbb{Z}$ and $R \in \mathbb{R}$, the multiplicities $m_\Gamma(\pi_{\nu,n})$ of representations $\pi_{\nu, n}$ in $L^2(\Gamma \backslash G)$ are bounded by
\[\sum_{R-1\leq \nu \leq R+1} m_\Gamma(\pi_{\nu,n})\ll \vol(\Gamma \backslash G) \cdot (R^2 + n^2) + \mathrm{O}_\Gamma(1).\]
\end{prop}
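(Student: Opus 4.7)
The proof plan follows the standard pre-trace / Weyl's-law strategy: apply the non-spherical trace formula of Proposition \ref{separable-trace} to a smooth, even, real-valued test function $F(t_{u,\theta}) = g(u) f(\theta)$ constructed so that every weight on the spectral side is non-negative and the weight at spectral parameter $(\nu, p) = (is, n)$ is bounded below by an absolute constant for $s \in [R-1, R+1]$, and then control the geometric side.

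I would construct the test function as follows. Fix once and for all an even smooth bump $h_0 \in C_c^\infty(\mathbb{R})$ with $\hat h_0(0) > 0$; set $h(u) := h_0(u)\cos(Ru)$ and $g := h * \tilde h$ with $\tilde h(u) := h(-u)$. Then $g$ is smooth, even, compactly supported, and $\hat g(\xi) = \hat h(\xi)^2 \geq 0$ on both the real and imaginary axes (on the latter because $\hat h(i\eta) = \int h(u)e^{2\pi u\eta}\,du \in \mathbb{R}$). From $\hat h(\xi) = \tfrac12\bigl(\hat h_0(\xi - R/(2\pi)) + \hat h_0(\xi + R/(2\pi))\bigr)$ one checks $\hat g(-s/(2\pi)) \geq c_0$ uniformly for $s \in [R-1, R+1]$, with $c_0 > 0$ absolute. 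For $f$, fix an even $\beta : \mathbb{Z} \to \mathbb{R}_{\geq 0}$ supported on $\{-1, 0, 1\}$ with $\beta(0) > 0$ and set $\hat f(p) := \beta(p-n) + \beta(p+n)$, so that $\hat f \geq 0$, $\hat f$ is even, $\hat f(-n) \geq \beta(0)$, and $f$ is a real even trigonometric polynomial with $O(1)$ coefficients. Evenness of $f$ and $g$ satisfies the symmetry condition of Proposition \ref{separable-trace}.

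Applying the trace formula, the non-negativity of every spectral term together with the uniform positivity of $\hat f(-n)\hat g(-s/(2\pi))$ for $s \in [R-1,R+1]$ yields
\[ c_0\, \beta(0) \sum_{s \in [R-1, R+1]} m_\Gamma(\pi_{is,n}) \;\leq\; \sum_{\nu, p} m_\Gamma(\pi_{\nu, p})\, \hat f(-p)\, \hat g(i\nu/(2\pi)), \]
and by Proposition \ref{separable-trace} the right side equals the geometric side minus the trivial-representation and identity contributions. The non-trivial geometric sum is $\mathrm{O}_\Gamma(1)$, since compact support of $g$ leaves only finitely many geodesics and $w(\gamma)$ is bounded by positivity of the systole on the compact quotient; the trivial-representation contributions $\tfrac12 \hat f(0)\int(e^u+e^{-u})g\,du$ and $\tfrac12\hat g(0)(\hat f(1)+\hat f(-1))$ are $O(1)$, since $\hat f(0), \hat f(\pm 1)$ vanish for $|n|$ sufficiently large and $\hat g(0) = \hat h(0)^2 = O(1)$; the identity contribution $-\tfrac{1}{2\pi}\vol(\Gamma\backslash G)(g''(0) f(0) + g(0) f''(0))$ is $\ll \vol(\Gamma\backslash G)(R^2 + n^2)$, since direct differentiation gives $|g''(0)| \ll R^2 + 1$ (the dominant contribution being two derivatives of $\cos(Ru)$), $|f''(0)| \leq \sum_p p^2\hat f(p) \ll n^2$, and $f(0), g(0) = O(1)$. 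The technical crux is securing the uniformity of $c_0$ in $R$, which the cosine modulation provides by placing the peak of $\hat h$ precisely at $\xi = \pm R/(2\pi)$; the $R^2 + n^2$ shape of the bound is intrinsic and reflects the order of the Casimir operator acting on $\pi_{\nu,p}$.
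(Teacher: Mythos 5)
Your argument is correct in substance and is exactly the standard Weyl-law/test-function positivity argument; note that this paper does not reprove the proposition but imports it from \cite[Prop.~2.5]{DM21}, and your route (apply Proposition \ref{separable-trace} to $F=g\cdot f$ with nonnegative spectral weights concentrated at frequency $R$ and $K$-type $n$, then read off the bound from the identity term $\vol(\Gamma\backslash G)(g''(0)f(0)+g(0)f''(0))\ll \vol(\Gamma\backslash G)(R^2+n^2+1)$ while the compactly supported geometric sum and the trivial-representation terms are $\mathrm{O}_\Gamma(1)$) is the same strategy as in that reference. One detail to tighten: the uniform lower bound $\hat g(-s/(2\pi))\geq c_0$ on $s\in[R-1,R+1]$ does not follow from $\hat h_0(0)>0$ alone, since (i) $\hat h_0$ could vanish somewhere in the fixed window $[-\tfrac{1}{2\pi},\tfrac{1}{2\pi}]$, and (ii) the far-shifted copy $\hat h_0(\xi-R/(2\pi))$ could be negative and partially cancel the near copy inside $\hat h(\xi)=\tfrac12\bigl(\hat h_0(\xi-R/(2\pi))+\hat h_0(\xi+R/(2\pi))\bigr)$; both are repaired by taking $h_0=\phi\ast\phi$ with $\phi$ a fixed even bump, suitably dilated, so that $\hat h_0=\hat\phi^2\geq 0$ everywhere and $\hat h_0\geq c>0$ on a fixed neighborhood of $0$, after which $\hat h(\xi)\geq\tfrac12\hat h_0(\xi+R/(2\pi))\geq \tfrac{c}{2}$ on the window. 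Also, for $|n|\leq 2$ the trivial-representation terms do not vanish, but they are still $\mathrm{O}(1)$ because $g$ has fixed compact support and bounded sup-norm, so $\int(e^u+e^{-u})g(u)\,du=\mathrm{O}(1)$; with these two small repairs the proof is complete.
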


This upper bound on the spectral parameters in fixed-length intervals allows us to use the non-spherical trace formula to obtain a bias count on the geometric side, while bounding the contribution from the spectral side.

\subsection{Kronecker-Weyl theorem} A critical observation in the proof of the distribution of 
\begin{equation*}
E^{(T)}[f, g_{y, \eta}] := 2 \sum_{\substack{|s|, |p|<T\\ s \neq 0}} m_\Gamma(\pi_{is,p}) \Re\left(\hat f(p) e^{isy}c_{s, \eta}\right) +b_{f, \eta}
\end{equation*}
is that this is a continuous function of a finite number of exponential functions $e^{isy}$, where $s$ is a non-zero spectral parameter of $L^2(\Gamma \backslash G)$. We can then apply the Kronecker-Weyl theorem to determine the distribution of this sum as $y \rightarrow \infty$.

We state the Kronecker-Weyl theorem in its full generality, which was written up formally by \cite{Devin} and based on \cite{Humphries}.

\begin{theorem}[Kronecker-Weyl]\label{Kronecker-Weyl}  Let $t_1,..., t_n$ be real numbers, and let $H$ be the closure of $\allowbreak \{ (e^{2 \pi i t_1 y}, ..., e^{2 \pi i t_n y}):  y \in \mathbb{R}\}$ in $\mathbb{T}^n$.
	
	1) $H$ is a $r$-dimensional subtorus of $\mathbb{T}^n$, where $r$ is dimension of the span of $t_1, ..., t_n$ over $\mathbb{Q}$.
	
	2) For any continuous function $h: \mathbb{T}^n \rightarrow \mathbb{C}$, we have that 
	\[\lim_{Y \rightarrow \infty} \frac{1}{Y} \int_{0}^Y h(e^{2 \pi i t_1 y},..., e^{2 \pi i t_n y}) \, dy = \int_H h(z) \, d \mu_H(z).\]
	where $\mu_H$ is the Haar measure on $H$. 
\end{theorem}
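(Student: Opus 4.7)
The plan is to prove Part 1 through Pontryagin duality and then derive Part 2 from Weyl's equidistribution criterion, extending the identity from characters to all continuous test functions via Stone--Weierstrass. For Part 1, I would note that the orbit $\mathcal{O} = \{(e^{2\pi i t_1 y}, \ldots, e^{2\pi i t_n y}) : y \in \mathbb{R}\}$ is the image of a continuous homomorphism $\mathbb{R} \to \mathbb{T}^n$, so $\mathcal{O}$ is both connected and a subgroup; its closure $H$ is therefore a closed, connected subgroup of $\mathbb{T}^n$, hence a subtorus. To compute $\dim H$, I would identify its annihilator $L = H^\perp \subset \mathbb{Z}^n \cong \widehat{\mathbb{T}^n}$: a character $\chi_{\mathbf{m}}(z_1, \ldots, z_n) = z_1^{m_1}\cdots z_n^{m_n}$ is trivial on $H$ if and only if it is trivial on the dense orbit $\mathcal{O}$, i.e.\ $e^{2\pi i y \tau_{\mathbf{m}}} = 1$ for all $y \in \mathbb{R}$ where $\tau_{\mathbf{m}} = \sum_j m_j t_j$, which happens iff $\tau_{\mathbf{m}} = 0$. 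Thus $L = \{\mathbf{m} \in \mathbb{Z}^n : \sum_j m_j t_j = 0\}$ has $\mathbb{Z}$-rank $n - r$ where $r = \dim_{\mathbb{Q}} \mathrm{span}(t_1, \ldots, t_n)$, and Pontryagin duality gives $\dim H = n - \mathrm{rk}(L) = r$.

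For Part 2, I would first verify the identity on individual characters $\chi_{\mathbf{m}}$. When $\mathbf{m} \in L$, both the time average and the space integral equal $1$. Otherwise $\tau_{\mathbf{m}} \neq 0$ and
\[\frac{1}{Y}\int_0^Y e^{2\pi i \tau_{\mathbf{m}} y} \, dy = \frac{e^{2\pi i \tau_{\mathbf{m}} Y} - 1}{2\pi i \tau_{\mathbf{m}} Y} \longrightarrow 0\quad (Y \to \infty),\]
while $\int_H \chi_{\mathbf{m}} \, d\mu_H = 0$ by orthogonality of characters on the compact abelian group $H$ (since $\chi_{\mathbf{m}}$ restricts non-trivially to $H$). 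By Stone--Weierstrass, finite $\mathbb{C}$-linear combinations of characters are uniformly dense in $C(\mathbb{T}^n)$, so a standard $\varepsilon/3$ approximation argument (write $h = P + (h - P)$ with $\|h - P\|_\infty < \varepsilon$, apply the character identity to $P$, and bound the two error terms uniformly) promotes the identity to arbitrary continuous $h$.

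The main obstacle is the structural step in Part 1: identifying $H$ as an honest subtorus and extracting its dimension ultimately requires either the structure theorem for closed subgroups of compact Lie groups or the Pontryagin-duality route above. A self-contained alternative, if one wishes to avoid abstract machinery, is to fix a $\mathbb{Q}$-basis $\{t_{i_1}, \ldots, t_{i_r}\}$ of $\mathrm{span}_{\mathbb{Q}}(t_1, \ldots, t_n)$, express each $t_j$ as a rational linear combination thereof, and realize $H$ explicitly as the image of an $\mathbb{R}$-linear map $\mathbb{R}^r \to \mathbb{R}^n$ composed with reduction modulo $\mathbb{Z}^n$; this presentation makes both the torus structure on $H$ and its Haar measure $\mu_H$ concrete, and the time-averaging argument for Part 2 reduces to the corresponding classical statement on $\mathbb{T}^r$.
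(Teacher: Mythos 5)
Your argument is correct, and there is nothing to compare it against inside the paper: the paper states this Kronecker--Weyl theorem as a background result and refers to the write-ups of Devin and Humphries rather than proving it. Your proof is essentially the standard one given in those sources: part 1 via the observation that the orbit is a connected subgroup, so its closure $H$ is a closed connected subgroup of $\mathbb{T}^n$ (hence a subtorus by the closed-subgroup/structure theory or, equivalently, by duality), with $\dim H = n - \mathrm{rk}(H^{\perp})$ and $H^{\perp} = \{\mathbf{m} \in \mathbb{Z}^n : \sum_j m_j t_j = 0\}$ of rank $n-r$; part 2 by checking the identity on characters $\chi_{\mathbf{m}}$ (time average $\to 0$ and $\int_H \chi_{\mathbf{m}}\, d\mu_H = 0$ when $\sum_j m_j t_j \neq 0$, both sides $1$ otherwise) and then passing to all of $C(\mathbb{T}^n)$ by Stone--Weierstrass with a uniform $\varepsilon$-approximation. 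The only implicit convention worth stating is that $\mu_H$ is the Haar \emph{probability} measure on $H$, which is what makes the trivial-character case and the approximation step come out correctly; with that noted, I see no gaps.
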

Often, the Kronecker-Weyl theorem is stated under the assumption that $t_1,..., t_n$ are linearly independent over $\mathbb{Q}$. In this case, $H = \mathbb{T}^n$ and $d\mu_H$ is the Haar measure $dx_1 \cdots \, dx_n$ on the torus $[0,1]^n$. We can think of $(e^{2 \pi i t_1 y}, ..., e^{2 \pi i t_n y})$ as being ``randomly'' distributed throughout $\mathbb{T}^n$. Note that the integral on the right hand side has no dependence on $t_1,..., t_n$; sampling $(e^{2 \pi i t_1 y}, ... ,e^{2 \pi i t_n y})$ with a continuous function $h$ is (in the limit) equivalent to sampling $h$ on the torus. 

In the general statement, $H$ is a sub-torus of $\mathbb{T}^n$, and we can think of $(e^{2 \pi i t_1 y}, ... , e^{2 \pi i t_n y})$ as being ``randomly'' distributed throughout its closure $H$. Then rather than integrating along this dense, one-dimensional subset, we can instead integrate over the sub-torus $H$ with the Haar measure $\mu_H$, significantly simplifying the integral.

\subsection{L\'evy's continuity theorem} We will also use a stronger version of L\'evy's continuity theorem, proven, for example, in \cite[Section 26, Corollary 1]{Bill}, which allows us to conclude, under certain conditions, that the limit of a sequence of measures is also a measure. In particular, we require that the Fourier transforms of the measure (referred to as characteristic functions in probability theory) converge to a function which is continuous at $0$.

\begin{theorem}[L\'evy's continuity theorem]\label{Levy} Suppose that for measures $\mu_n$, $\lim_{n \rightarrow \infty} \hat \mu_n (\xi) = \phi(\xi)$ for each $\xi$ and the limit function $\phi$ is continuous at $0$. Then there exists a measure $\mu$ such that $\mu_n$ converges in distribution to $\mu$, and $\hat \mu(\xi) = \phi(\xi)$.
	
\end{theorem}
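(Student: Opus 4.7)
The plan is to follow the classical Fourier-analytic route: deduce tightness of $\{\mu_n\}$ from the assumed continuity of $\phi$ at $0$, extract weak subsequential limits via Prokhorov's theorem, then use injectivity of the Fourier transform on probability measures to show all such limits coincide and equal the measure $\mu$ whose characteristic function is $\phi$.

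First I would establish the key truncation estimate: for any probability measure $\mu$ on $\mathbb{R}$ and any $T>0$,
\[
\mu\bigl(\{x : |x| > 2/T\}\bigr) \;\leq\; \frac{1}{T}\int_{-T}^{T}\bigl(1 - \hat\mu(\xi)\bigr)\,d\xi.
\]
This follows from Fubini after writing $1 - \hat\mu(\xi) = \int_{\mathbb{R}}(1 - e^{-i\xi x})\,d\mu(x)$ and using that $\frac{1}{T}\int_{-T}^{T}(1 - e^{-i\xi x})\,d\xi = 2\bigl(1 - \frac{\sin(Tx)}{Tx}\bigr)$ is at least a positive constant on $|x|>2/T$. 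Applying this to each $\mu_n$, the hypothesis that $\hat\mu_n(\xi)\to\phi(\xi)$ pointwise with $|\hat\mu_n|\leq 1$ lets me pass to the limit under the integral by dominated convergence, giving
\[
\limsup_{n\to\infty}\mu_n\bigl(\{|x|>2/T\}\bigr)\;\leq\; \frac{1}{T}\int_{-T}^{T}(1-\phi(\xi))\,d\xi.
\]
Since $\phi$ is continuous at $0$ and $\phi(0) = \lim\hat\mu_n(0) = 1$, the right-hand side can be made arbitrarily small by choosing $T$ small, so given $\epsilon>0$ there is a compact set $[-2/T, 2/T]$ which all but finitely many $\mu_n$ assign mass at least $1-\epsilon$, and the remaining finitely many are individually tight. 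This yields tightness of the full sequence.

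Next I would invoke Prokhorov's theorem (or the Helly selection principle on $\mathbb{R}$), which says any tight sequence of probability measures has a weakly convergent subsequence $\mu_{n_k}\Rightarrow \mu$ for some probability measure $\mu$. Since $e^{-i\xi x}$ is a bounded continuous function of $x$, weak convergence gives $\hat\mu(\xi) = \lim_k \hat\mu_{n_k}(\xi) = \phi(\xi)$. By the Fourier uniqueness theorem for finite Borel measures on $\mathbb{R}$, $\mu$ is determined by $\phi$; hence every weakly convergent subsequence of $\{\mu_n\}$ has the same limit $\mu$. A standard subsequence-of-subsequence argument then upgrades this to $\mu_n\Rightarrow\mu$.

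The main obstacle, and the only nontrivial analytic point, is the truncation inequality together with the interchange of limit and integral needed to convert continuity of $\phi$ at $0$ into tightness; once tightness is in hand, the rest is soft functional-analytic machinery. Everything else — Prokhorov's theorem and Fourier uniqueness — is invoked as a black box, which is why citing \cite[Section 26, Corollary 1]{Bill} in place of a full proof is appropriate in this expository context.
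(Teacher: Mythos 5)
Your proof is correct and is exactly the classical argument — truncation inequality to get tightness from continuity of $\phi$ at $0$, Prokhorov/Helly for subsequential weak limits, and Fourier uniqueness plus a subsequence-of-subsequences step — which is precisely the proof in the reference the paper cites for this theorem rather than reproving it. The only point worth flagging is that your argument tacitly takes the $\mu_n$ to be probability measures (so that $\hat\mu_n(0)=1$), which matches both the cited formulation and the paper's application to the measures $\mu_{T,f,\eta}$.
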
 

%Check that "characteristic function" matches Fourier transform. Yes - $\phi(t) := \int e^{itx} \mu(dx)$

We will use this theorem to conclude that the limit of a particular sequence of measures is in fact a measure. We also obtain the additional information that $\mu_n$ converges in law (or, equivalently, in distribution) to $\mu$. We provide  clarification on the meaning of this definition with the following theorem from  \cite[Section 25, Theorem 25.8]{Bill}.

\begin{theorem}The following conditions are equivalent:
\begin{enumerate}
	\item $\mu_n$ converges in law to $\mu$,
	\item $\lim_{n \rightarrow \infty} \int f\, d \mu_n  = \int f d \,\mu$ for every bounded, continuous, real function $f$,
	\item $\lim_{n \rightarrow \infty} \mu_n(A) \rightarrow \mu(A)$ for every Borel set $A$ with $\mu(\partial A) = 0$.
\end{enumerate}
	
\end{theorem}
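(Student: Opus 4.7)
The plan is to recognize the three conditions as the classical Portmanteau equivalences for weak convergence of probability measures on $\mathbb{R}$. Since convergence in law (1) is customarily defined by condition (2) (integration against bounded continuous test functions), the substantive content is the equivalence $(2) \Leftrightarrow (3)$, which I would establish by first approximating indicators of closed sets from above by bounded Lipschitz functions, and then converting continuous-function convergence into continuity-set convergence via a sandwich argument.

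For the direction $(2) \Rightarrow (3)$, I first pass through the closed-set version: for any closed $F \subseteq \mathbb{R}$, the bounded Lipschitz functions $f_\epsilon(x) = \max(0, 1 - \epsilon^{-1} \mathrm{dist}(x, F))$ satisfy $\chi_F \leq f_\epsilon \leq \chi_{F^{(\epsilon)}}$, where $F^{(\epsilon)}$ denotes the open $\epsilon$-neighborhood. Condition (2) then yields
\[ \limsup_n \mu_n(F) \leq \limsup_n \int f_\epsilon \, d\mu_n = \int f_\epsilon \, d\mu \leq \mu(F^{(\epsilon)}), \]
and $\mu(F^{(\epsilon)}) \downarrow \mu(F)$ as $\epsilon \to 0$ by continuity of $\mu$ from above on $F^{(1/n)} \downarrow F$. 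Taking complements gives the dual statement $\liminf_n \mu_n(G) \geq \mu(G)$ for every open $G$. Applied to a continuity set $A$ with $\mu(\partial A) = 0$, monotonicity of $\mu_n$ gives $\mu_n(\mathrm{int}(A)) \leq \mu_n(A) \leq \mu_n(\bar A)$, and the identity $\mu(\mathrm{int}(A)) = \mu(\bar A) = \mu(A)$ collapses both $\liminf \mu_n(A)$ and $\limsup \mu_n(A)$ to $\mu(A)$.

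For the direction $(3) \Rightarrow (2)$, I would write a bounded continuous $f$ with $|f| \leq M$ as $f = f^+ - f^-$ and invoke the layer-cake representation
\[ \int f^+ \, d\mu_n = \int_0^M \mu_n(\{f > t\}) \, dt, \]
and similarly for $f^-$. The open set $\{f > t\}$ has boundary contained in the level set $\{f = t\}$, and since these level sets are pairwise disjoint, at most countably many of them can carry positive $\mu$-mass. For each of the remaining (co-countable, hence Lebesgue-full-measure) values of $t$, the set $\{f > t\}$ is a continuity set of $\mu$, so (3) gives $\mu_n(\{f > t\}) \to \mu(\{f > t\})$. Bounded convergence in $t$ then transfers this pointwise convergence to the layer-cake integral, yielding $\int f \, d\mu_n \to \int f \, d\mu$.

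The main technical point is the measure-theoretic bookkeeping in $(2) \Rightarrow (3)$, specifically the continuity $\mu(F^{(\epsilon)}) \downarrow \mu(F)$ as $\epsilon \to 0$, which rests on countable additivity for the decreasing family of open neighborhoods with intersection $F$. The layer-cake step in $(3) \Rightarrow (2)$ is essentially routine once one observes that only countably many level sets can be charged by $\mu$, and hence the exceptional set of $t$ is Lebesgue-null; this is the one place where the separability/structure of $\mathbb{R}$ enters, but only trivially.
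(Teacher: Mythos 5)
This theorem is imported by the paper directly from Billingsley \cite[Section 25, Theorem 25.8]{Bill} solely to clarify what ``converges in law'' means after the application of L\'evy's continuity theorem; the paper gives no proof of its own, so there is no internal argument to compare yours against. Your write-up is a correct, standard proof of the Portmanteau equivalences: the Lipschitz envelopes $f_\epsilon$ of indicators of closed sets give $\limsup_n \mu_n(F) \le \mu(F^{(\epsilon)})$ and hence $\le \mu(F)$ by monotone continuity along $F^{(1/k)} \downarrow F$, complementation (using total mass one) yields the open-set inequality, and the sandwich $\mathrm{int}(A) \subseteq A \subseteq \bar{A}$ together with $\mu(\partial A)=0$ settles $(2)\Rightarrow(3)$; for $(3)\Rightarrow(2)$ the layer-cake identity plus the observation that only countably many level sets $\{f=t\}$ can carry $\mu$-mass, followed by bounded convergence on $[0,M]$, is exactly right. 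The one point to flag is your treatment of $(1)\Leftrightarrow(2)$ as purely definitional: in the reference the paper cites (\emph{Probability and Measure}), convergence in law is defined via distribution functions (convergence at continuity points of the limit CDF), so under that convention you still owe a short bridge --- from $(3)$, half-lines $(-\infty,x]$ with $\mu(\{x\})=0$ are continuity sets, which gives CDF convergence, and conversely CDF convergence yields $(2)$ by a direct approximation or by Skorokhod representation, which is how Billingsley's own proof runs. If instead you adopt the convention that convergence in law \emph{is} condition $(2)$, your proof is complete as stated, and for the paper's purposes (probability measures arising as limits of the $\mu_{T,f,\eta}$) either convention serves equally well.
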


\section{Count of Bias in Holonomy}

Let $\psi: \mathbb{R} \rightarrow \mathbb{R}$ be a fixed, smooth, even, non-negative function compactly supported on $[-1,1]$ such that the integral $\|\psi\|_1 =1$. For $\eta>0$, we define
\begin{equation}\label{psi-eta-def}
\psi_\eta(t) := \frac{1}{\eta}\psi\left(\frac{t}{\eta}\right),
\end{equation}
which is compactly supported on $[-\eta,\eta]$ with integral $1$.

We define the smooth cutoff functions as the convolutions
\begin{equation}\label{g-y-eta-def}
g_{y, \eta} = \psi_\eta \star \chi_{[-y,y]}, \quad h_{y, \eta} = \psi_\eta \star \left( \chi_{[-y,y]} \cdot \sgn\right)
\end{equation}
to approximate a length cutoff at $y>0$; note that these functions are equal on $[\eta, \infty)$.

For the holonomy parameter, we use a smooth function $f: \mathbb{R}/2 \pi \mathbb{Z} \rightarrow \mathbb{C}$ with the property $\hat f(0) = 0$. In this case, $f$ averages to $0$, and the main term in \eqref{smooth-APGT} disappears. Alternately, we could think of subtracting the average value of a function on holonomy so as to sample the secondary term. Thus we define the sum
\begin{equation}\label{bias-count-def}
L^P[f, g_{y, \eta}] := \sump_{[\gamma]} \ell(\gamma) g_{y, \eta}(\ell(\gamma) )f\left(\hol(\gamma)\right)
\end{equation}
which captures the bias in holonomy when sampled with the smooth function $f$. As an example, consider $f(\theta) = \cos(\theta)$; in this case, $L^P[f, g_{y, \eta}]$ measures the preference of holonomy towards  the left side of the circle (in a smoothed sense).

To estimate $L^P[f, g_{y, \eta}]$, we start with a similar function 
\begin{equation}\label{L-tilde-def}
\tilde L[f, g_{y, \eta}]:= \sum_{[\gamma]} \ell(\gamma_0) w(\gamma)\left(e^{\ell(\gamma)} + e^{-\ell(\gamma)}\right) g_{y, \eta}(\gamma),
\end{equation}
where the sum is over both primitive and imprimitive geodesics,  $\gamma_0$ is the primitive geodesic generating $\gamma$, and $w(\gamma)$ is defined as in Proposition \ref{separable-trace}. This can be estimated by the trace formula using the sampling functions $\tilde g_{y, \eta}(x) = (e^x + e^{-x}) g_{y, \eta}(x)$ and $\tilde h_{y, \eta}(x) = (e^x + e^{-x}) h_{y, \eta}(x)$. The exponential growth in these functions will account for  the exponentially decaying weight $w(\gamma)$ in the trace formula. The estimate will contain constants 
\begin{equation}\label{c-s-eta-def}
c_{s, \eta} = \int_{-\infty}^\infty \psi_\eta(t) \frac{e^{t(1+is)}}{1+is} \, dt
\end{equation}
which depend on the spectral parameter $s$ and the smoothing parameter $\eta$. We now remark on the Schwartz decay of $c_{s, \eta}$.

\begin{remark}\label{cs-decay} Fixing $\eta$, we have that $c_{s, \eta} = \frac{\hat j_\eta(-s/2 \pi)}{1 + is}$ where $j_\eta(t)=\psi_\eta(t)e^t$. This results in the Schwartz decay of $c_{s, \eta}$ (for a fixed $\eta>0$).  If we make the $\eta$ dependence explicit, then we have that
	\[c_{s, \eta} = \frac{1}{\eta(1 + is)} \int_{-\infty}^\infty  \psi(t/\eta)e^t e^{ist} \, dt = \frac{1}{1+is} \int_{-\infty}^\infty  \psi(t) e^{\eta t} e^{i\eta s t} \, dt= \frac{\hat \psi(\frac{\eta(i-s)}{2 \pi})}{1+is}.\]
	Since $\psi$ is smooth and compactly supported, we can then use the relationship $\hat \psi(\xi)  = \frac{\widehat{\psi^{(k)}}(\xi)}{(2 \pi i \xi)^k}$ and the Schwartz decay of $\widehat{\psi^{(k)}}$ to calculate that for $0<\eta<\eta_0$,
	\[c_{s, \eta} = \frac{\hat \psi(\frac{\eta(i-s)}{2 \pi})}{1+is} \ll_{k} \widehat{\psi^{(k)}}(\frac{\eta(i-s)}{2\pi})\frac{1}{(|s|+1)(|\eta s|^k+1)} \ll_{k, \eta_0} \frac{1}{(|s|+1)(|\eta s|^k+1)} \]
	since $ \widehat{\psi^{(k)}}$ is bounded in a vertical strip.
\end{remark}

\begin{prop}\label{geom-trace-est} For any discrete, co-compact, torsion-free subgroup $\Gamma < \PSL$, $y> \eta_0 \geq \eta>0$, and any smooth function $f : \mathbb{R}/2 \pi \mathbb{Z} \rightarrow \mathbb{R}$ such that $\hat f(0) = 0$,
\begin{equation*}
\tilde L[f, g_{y, \eta}] = 2 e^y \sum_{s,p} m_\Gamma(\pi_{is,p}) \Re\big( \hat f(-p) e^{isy} c_{s, \eta}\big) - (\hat f(1) + \hat f(-1)) e^y c_{0, \eta}+ \mathrm{O}_{\Gamma, f, \eta_0}\left(\frac{1}{\eta^2} + 1\right),
\end{equation*}
where the sum is over principal series representations of $L^2(\Gamma \backslash G)$, $c_{s, \eta}$ is defined as in \eqref{c-s-eta-def}, and $\tilde L[f,g_{y, \eta}]$ is defined as in \eqref{L-tilde-def}.
\end{prop}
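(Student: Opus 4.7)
The plan is to apply the separable trace formula (Proposition \ref{separable-trace}) to test functions built from the exponentially amplified cutoffs $\tilde g_{y,\eta}(u)=(e^u+e^{-u})g_{y,\eta}(u)$ and $\tilde h_{y,\eta}(u)=(e^u+e^{-u})h_{y,\eta}(u)$, then to exploit the hypothesis $\hat f(0)=0$ to eliminate the leading ambient main term along with all complementary series contributions, and finally to control every remaining piece using spectral bounds and Schwartz decay.

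I begin by decomposing $f=\feven+\fodd$. Since $g_{y,\eta}$ is even while $h_{y,\eta}$ is odd, the separable test functions $F_1(t_{u,\theta})=\tilde g_{y,\eta}(u)\feven(\theta)$ and $F_2(t_{u,\theta})=\tilde h_{y,\eta}(u)\fodd(\theta)$ each satisfy $F(t_{u,\theta})=F(t_{-u,-\theta})$, so Proposition \ref{separable-trace} applies to both. Their geometric sides sum to $\tilde L[f,g_{y,\eta}]$ up to the discrepancy where $g_{y,\eta}$ and $h_{y,\eta}$ disagree, namely $|u|<\eta$; on the compact quotient this affects only $O_{\Gamma,\eta_0}(1)$ short geodesics, contributing $O_{\Gamma,f,\eta_0}(1)$ in total.

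Next I evaluate the spectral and identity terms. A direct convolution computation using $g_{y,\eta}=\psi_\eta\star\chi_{[-y,y]}$ yields, at principal series parameters $\nu=is$,
\[
\hat{\tilde g}_{y,\eta}\!\left(\tfrac{i\nu}{2\pi}\right)=2e^y\Re(e^{isy}c_{s,\eta})+O(e^{-y}|c_{s,\eta}|),\qquad \hat{\tilde h}_{y,\eta}\!\left(\tfrac{i\nu}{2\pi}\right)=2ie^y\Im(e^{isy}c_{s,\eta})-4i\,\Im(c_{s,\eta})+O(e^{-y}|c_{s,\eta}|).
\]
Combining the even and odd contributions via $\hat\feven(-p)+\hat\fodd(-p)=\hat f(-p)$ and using the reality of $f$ (so $\hat f(p)=\overline{\hat f(-p)}$), the leading pieces collapse precisely into $2e^y\sum_{s,p}m_\Gamma(\pi_{is,p})\Re(\hat f(-p)e^{isy}c_{s,\eta})$. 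Because every complementary series representation is $\pi_{\nu,0}$, the hypothesis $\hat f(0)=0$ annihilates their entire contribution as well as the $\tfrac12\hat\feven(0)\int(e^u+e^{-u})\tilde g_{y,\eta}$ term from the trivial representation. What remains of the trivial representation is $-\hat{\tilde g}_{y,\eta}(0)(\hat\feven(1)+\hat\feven(-1))/2$; evaluating $\hat{\tilde g}_{y,\eta}(0)=2e^yc_{0,\eta}(1-e^{-2y})$ and $\hat\feven(1)=(\hat f(1)+\hat f(-1))/2$ produces the claimed $-(\hat f(1)+\hat f(-1))e^yc_{0,\eta}$ with an exponentially small remainder. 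Finally $\tilde g_{y,\eta}(0)=2$ and $\tilde g_{y,\eta}''(0)=2$ (valid once $y>\eta$ makes $g_{y,\eta}''(0)=0$), so the identity/volume term is $O_{\Gamma,f}(1)$, and the odd analogue vanishes since $\tilde h_{y,\eta}$ is odd and $\fodd(0)=0$.

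The main obstacle I expect is controlling the residual spectral sum, especially the non-decaying piece $-4i\,\Im(c_{s,\eta})$ from $\hat{\tilde h}_{y,\eta}$. Here I combine the Weyl-type bound $\sum_{R\le|s|\le R+1}m_\Gamma(\pi_{is,p})\ll_\Gamma R^2+p^2+1$ from Proposition \ref{spectral-bound} with the Schwartz decay $|c_{s,\eta}|\ll_{k,\eta_0}(1+|s|)^{-1}(1+|\eta s|^k)^{-1}$ from Remark \ref{cs-decay}. Splitting the $s$-sum at the natural scale $|s|\asymp\eta^{-1}$, the low range $|s|\le\eta^{-1}$ contributes $\ll\sum_{R\le\eta^{-1}}(R+p^2R^{-1})\ll\eta^{-2}+p^2\log(\eta^{-1})$, while the tail $|s|>\eta^{-1}$, with $k$ chosen large enough, gives $O(\eta^{-2}+p^2)$. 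Summing against the Schwartz decay of $\hat f(p)$ absorbs these, together with all $e^{-y}|c_{s,\eta}|$ subleading terms and the volume/short-geodesic errors, into the stated bound $O_{\Gamma,f,\eta_0}(\eta^{-2}+1)$.
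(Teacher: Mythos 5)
Your proposal is correct and follows essentially the same route as the paper: decompose $f=\feven+\fodd$, apply the even/odd cases of Proposition \ref{separable-trace} to $\tilde g_{y,\eta}\feven$ and $\tilde h_{y,\eta}\fodd$ (with an $\mathrm{O}_{\Gamma,f,\eta_0}(1)$ error from the short geodesics where $g_{y,\eta}\neq h_{y,\eta}$), compute the Abel transforms to extract $2e^y\Re(\hat f(-p)e^{isy}c_{s,\eta})$ and the trivial-representation term $-(\hat f(1)+\hat f(-1))(e^y-e^{-y})c_{0,\eta}$, and bound the residual spectral sum by combining Proposition \ref{spectral-bound} with the decay of $c_{s,\eta}$ and $\hat f(p)$, splitting at $|s|\asymp 1/\eta$. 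Your explicit isolation of the $-4i\,\Im(c_{s,\eta})$ piece of $\hat{\tilde h}_{y,\eta}$ is a slightly sharper bookkeeping of what the paper absorbs into $\mathrm{O}(|c_{s,\eta}|)$, but the argument is the same.
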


\begin{proof} First, we note that $f = \feven + \fodd$ where $\feven(x) = \frac{f(x) + f(-x)}{2}$ and $\fodd(x) = \frac{f(x) -f(-x)}{2}$. Then we decompose $\tilde L[f, g_{y, \eta}]$ into
 \begin{align*}
\tilde L[f, g_{y, \eta}] &= \sum_{[\gamma]} \ell(\gamma_0) w(\gamma) \tilde g_{y, \eta}(\ell(\gamma))\feven(\hol(\gamma))+ \sum_{[\gamma]}\ell(\gamma_0) w(\gamma) \tilde h_{y, \eta}(\ell(\gamma))  \fodd(\hol(\gamma))+ \mathrm{O}_{\Gamma, f, \eta_0}(1)\\
&= \tilde L_{\text{even}}[f, g_{y, \eta}]  + \tilde L_{\text{odd}}[f, g_{y, \eta}] + \mathrm{O}_{\Gamma, f, \eta_0}(1).
\end{align*}
	
To estimate $\tilde L_{\text{even}}[f, g_{y, \eta}]$, we will use the function $\tilde g_{y, \eta}(u) \feven(\theta)$, where $\tilde g_{y, \eta}(u) = (e^u + e^{-u}) g_{y, \eta}(u)$, in the preliminary trace formula from Proposition \ref{separable-trace}. In the geometric term, we have
\[\tilde L_{\text{even}}[f,g_{y, \eta}]  := \sum_{[\gamma]} \ell(\gamma_0) g_{y, \eta}(\ell(\gamma)) (e^{\ell(\gamma)} + e^{- \ell(\gamma)}) w(\gamma) f_{\text{even}}(\hol( \gamma)),\]
where $w(\gamma) = |1- e^{\mathbb{C}\ell(\gamma)}|^{-1} |1- e^{-\mathbb{C}\ell(\gamma)}|^{-1}$.

By the trace formula, this is equal to:
\begin{align*}
\tilde L_{\text{even}}[f,g_{y, \eta}] &= \frac12 \sum_{\nu,p} (m_\Gamma(\pi_{\nu,p}) + m_\Gamma(\pi_{\nu, -p})) \hat f_{\text{even}}(p) \hat{\tilde  g}_{y, \eta}( \frac{i \nu}{2 \pi})+\frac{1}{2} \hat{f}_{\text{even}}(0) \int_{-\infty}^\infty (e^u + e^{-u}) \tilde g(u) \, du\\
 &- \hat{\tilde g}_{y, \eta}(0) \hat f_{\text{even}}(1)+  \frac{1}{2 \pi} \vol(\Gamma \backslash G)(\tilde g_{y, \eta}''(0) f(0) + \tilde g_{y, \eta}(0) f_{\text{even}}''(0)),
 \end{align*}
where each spectral term is $\gtildehat(i \nu/2 \pi) = \int_{\mathbb{R}} g_{y, \eta}(x) (e^x + e^{-x}) e^{\nu x} \, dx$. Since $\hat f_{\text{even}}(0) = \hat f(0)= 0$, the contributions of the complementary series representations and trivial representation will vanish. The final term is $\mathrm{O}_{\Gamma, f}(1)$.

The remaining spectral terms correspond to principal series representations $\pi_{is, p}$ for $p \neq 0$ and $s \in \mathbb{R}$. We now explicate these terms.
\begin{align*}
\gtildehat(s/2 \pi) &= \int_{-\infty}^\infty g_{y, \eta}(x) (e^x + e^{-x}) e^{isx} \,dx\\
& = \int_{-\infty}^\infty \Big(\int_{-\infty}^\infty \psi_\eta(t) \chi_{[-y,y]}(x-t) \,dt) (e^{x(1 + is)} + e^{x(-1+is)}\Big) \, dx\\
& = \int_{-\infty}^\infty \psi_\eta(t)\Big( \int_{-y +t}^{y +t} e^{x(1 + is)} + e^{x(-1+is)} \,dx \Big)\, dt\\
& = \int \psi_\eta(t) \left( \frac{e^{(y+t)(1+is)} - e^{(-y+t)(1+is)}}{1+is}  + \frac{e^{(y+t)(-1+is)} - e^{(-y+t)(-1+is)}}{-1+is} \right) \, dt\\
& = (e^{y(1+is)} - e^{-y(1+is)}) \left( \int_{-\infty}^\infty \psi_\eta(t) \frac{e^{t(1+is)}}{1+is} \, dt\right) \\
&\quad \quad \quad \quad \quad \quad+ (e^{-y(-1+is)} - e^{y(-1+is)}) \left( \int_{-\infty}^\infty  \psi_\eta(t) \frac{e^{t(1-is)}}{1-is} \, dt \right)\\
& = e^{y}(e^{iys} c_{s, \eta} + e^{-iys}c_{-s, \eta})  - e^{-y}(e^{-iys} c_{s, \eta} + e^{iys} c_{-s, \eta})\\
&=2 e^y \Re(e^{isy}c_{s, \eta}) - 2 e^{-y} \Re(e^{-isy} c_{s, \eta})
\end{align*}
where $s \in \mathbb{R}$ and $c_{-s, \eta} = \overline{ c_{s, \eta}}$.

Similarly, for $s = 0$, we have
\[\gtildehat(0) = 2e^y \int \psi_\eta(t) e^t \, dt -2 e^{-y} \int \psi_\eta(t) e^t \, dt = 2 (e^y-e^{-y}) c_{0, \eta}.\]

We also compute that $\tilde g_{y, \eta}''(0) = \tilde g_{y, \eta}(0) = 2$.

For the odd part, we use the odd trace formula from Proposition \ref{separable-trace} with the function $\tilde h_{y, \eta}(u) \fodd(\theta)$, where $\tilde h_{y, \eta}(u) = (e^u + e^{-u}) h_{y, \eta}(u)$ and $h_{y, \eta}$ is defined in \eqref{g-y-eta-def}, to obtain
\begin{align*}
\tilde L_{\text{odd}}[f, g_{y, \eta}] &= \sum_{[\gamma]} \ell(\gamma_0) w(\gamma)  \tilde h_{y, \eta}(\ell(\gamma)) \fodd (\hol(\gamma))\\
&=\frac12 \sum_{\nu,p} (m_\Gamma(\pi_{\nu,p})- m_\Gamma (\pi_{\nu, -p})) \hat f_{\text{odd}}(-p) \hat{\tilde h}_{y, \eta}( \frac{i \nu}{2 \pi}).
\end{align*}

Note that $\hat f_{\text{odd}}(0) = 0$ and so we are left with principal series representations where $\nu = is$. We calculate that
\begin{align*}
\hat{\tilde h}(-s/2 \pi) &= \int_{-\infty}^\infty h_{y, \eta}(x) (e^x + e^{-x}) e^{isx} \,dx\\
& = \int_{-\infty}^\infty \Big(\int_{-\infty}^\infty \psi_\eta(t) (\chi_{[0,y]}(x-t) - \chi_{[-y,0]}(x-t)\,dt) (e^{x(1 + is)} + e^{x(-1+is)}\Big) \, dx\\
& = \int_{-\infty}^\infty \psi_\eta(t)\Big( \int_{t}^{y +t} e^{x(1 + is)} + e^{x(-1+is)} \,dx -\int_{-y+t}^{t} e^{x(1 + is)} + e^{x(-1+is)} \,dx \Big)\, dt\\
& = \int \psi_\eta(t) \Big( \frac{e^{(y+t)(1+is)} - e^{(-y+t)(1+is)}}{1+is}  + \frac{e^{(y+t)(-1+is)} - e^{(-y+t)(-1+is)}}{-1+is} \Big) \, dt\\
& =(e^{y(1+is)} + e^{-y(1+is)} - 2) c_{s, \eta} - (e^{y(-1+is)} + e^{-y(-1+is)} - 2) c_{-s, \eta}\\
& =2 i e^y \Im(e^{isy} c_{s, \eta}) + \mathrm{O} (|c_{s, \eta}|).
\end{align*}
%\[\hat {\tilde h}_{y, \eta}(\frac{-s}{2\pi}) = 2 i e^y \Im(e^{isy} c_{s, \eta}) + \mathrm{O}_{\Gamma, \eta_0, N}(\frac{1}{(s+1)^N}).\]
Then noting that $\hat f_{\text{even}}(p) = \Re(\hat f(-p)) $ and $\hat f_{\text{odd}}(-p)= i \Im(\hat f(-p))$, we observe that
\[\Re(\hat f(-p)) \Re(e^{isy}c_{s,\eta}) - \Im (\hat f(-p)) \Im (e^{isy} c_{s, \eta}) = \Re( \hat f(-p) e^{isy} c_{s, \eta}),\]
which leads to the estimate 
\[\tilde L[f, g_{y, \eta}] = e^y \sum_{s,p} m_\Gamma(\pi_{is,p})  \Re\big(\hat f(-p) e^{isy} c_{s, \eta}\big)  + \mathrm{O}\bigg( \sum_{s,p} m_\Gamma(\pi_{is,p})) |\hat f(-p)| |c_{s, \eta}|\bigg) + \mathrm{O}_{\Gamma, f, \eta_0}(1).\]
%where the error term is bounded using the bound on multiplicities from Proposition \ref{spectral-bound}, the Schwartz decay of $c_{s, \eta}$ for a fixed $\eta$, and the Schwartz decay of $\hat f(p)$.

To bound the error term, we need  the Schwartz decay of $\hat f(p)$ and the bound on $c_{s, \eta}$  from Remark \ref{cs-decay}. In particular, there exists $n$, $m$ such that $\hat f(p) \ll_{f,n} \frac{1}{|p|^n}$ and $|c_{s,\eta}| \ll_{\eta_0,m} \frac{1}{(|s|+1)(|\eta s|^m + 1)}$. Then, breaking the spectral parameters into length $1$ intervals and  using the  bound on multiplicities from Proposition \ref{spectral-bound},
\begin{align*}
\sum_{s,p} m_\Gamma(\pi_{is,p})) |\hat f(-p)| |c_{s, \eta}| & \ll_{f,\eta_0, n,m} \sum_{s,p \neq 0} m_{\Gamma}(\pi_{is,p}) \frac{1}{|p|^n} \frac{1}{(|s|+1)(|\eta s|^m +1)}\\
& \ll_{ f, \eta_0, n,m} \sum_{p \neq 0} \frac{1}{|p|^n} \sum_{R \in  \mathbb{Z}} \frac{1}{(|R|+1)(|\eta R|^m+1)}\sum_{R \leq |s| < R+1} m_\Gamma(\pi_{is,p})\\
& \ll_{\Gamma, f, \eta_0, n,m} \sum_{p \neq 0} \frac{1}{|p|^n} \sum_{R \in \mathbb{Z}} \frac{R^2 + p^2}{(|R|+1)(|\eta R|^m+1)} .
\end{align*}
 Then we obtain that the error is
 \begin{align*}
 \ll_{\Gamma, f, \eta_0, n,m}& \sum_{p \neq 0} \frac{1}{|p|^n} \bigg(\sum_{0\leq |R| <1} R^2+  \sum_{1 \leq |R|<1/\eta} |R|  + \sum_{|R| \geq 1/\eta} \frac{1}{\eta^m |R|^{m-1}} \bigg)\\
+& \sum_{p \neq 0} \frac{1}{|p|^{n-2}} \bigg( \sum_{0 \leq |R|<1} 1+  \sum_{1 \leq |R|<1/\eta} \frac{1}{|R|}  +  \sum_{|R| \geq 1/\eta} \frac{1}{|\eta R|^m}\bigg).
 \end{align*}
 Then choosing, say, $n = 4$ and $m = 3$ yields an error term  $\ll_{\Gamma, f, \eta_0} \frac{1}{\eta^2}+1$.
\end{proof}

Next, we obtain an estimate on the contribution from the change of weights from the count obtained by the trace formula in Proposition \ref{geom-trace-est} to the desired weight in \eqref{bias-count-def}.

\begin{lemma}\label{change-of-weight} For any co-compact, torsion-free discrete subgroup $\Gamma < \PSL$, $0<\eta \leq  \eta_0 <y$, $g_{y, \eta}$ defined as in \eqref{g-y-eta-def}, and $f: \mathbb{R}/2 \pi \mathbb{Z} \rightarrow \mathbb{R}$ smooth such that $\hat f(0) = 0$,
\begin{multline*}
\sum_{[\gamma]} \ell(\gamma_0)\left(1 - \frac{e^{\ell(\gamma)} + e^{-\ell(\gamma)}}{e^{\ell(\gamma)} + e^{-\ell(\gamma)}- 2 \cos(\hol \gamma)}\right) g_{y, \eta}(\ell(\gamma)) f(\hol \gamma) \\
= -\big( \hat f(1)+ \hat f(-1)\big) (e^y - e^{-y}) c_{0, \eta} + \mathrm{O}_{\Gamma, f, \eta_0}\left(y + \frac{1}{\eta^2} +1\right).
\end{multline*}
\end{lemma}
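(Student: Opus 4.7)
The crucial algebraic simplification is
\[
1 - \frac{e^{\ell(\gamma)}+e^{-\ell(\gamma)}}{e^{\ell(\gamma)}+e^{-\ell(\gamma)} - 2\cos(\hol(\gamma))} = -2\cos(\hol(\gamma))\,w(\gamma),
\]
which will rewrite the left-hand side of the lemma as $-2\sum_{[\gamma]}\ell(\gamma_0)w(\gamma)g_{y,\eta}(\ell(\gamma))\cos(\hol(\gamma))f(\hol(\gamma))$. This is exactly the geometric side of Proposition~\ref{separable-trace} with length test function $g_{y,\eta}$ and holonomy test function $\phi(\theta) := -2\cos(\theta)f(\theta)$, so the plan is to invoke the separable trace formula directly.

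To meet the parity requirement $g(u)\phi(\theta) = g(-u)\phi(-\theta)$, I will split $f = f_{\text{even}} + f_{\text{odd}}$ and apply the even trace formula to $(g_{y,\eta}, \phi_{\text{even}})$ and the odd trace formula to $(h_{y,\eta}, \phi_{\text{odd}})$. Swapping $g_{y,\eta}$ for $h_{y,\eta}$ in the odd piece is justified because the two cutoffs agree on $[\eta,\infty)$ and only $O_{\Gamma, \eta_0}(1)$ conjugacy classes have $\ell(\gamma) \leq \eta$ (on which $\ell(\gamma_0)w(\gamma) \ll_\Gamma 1$), so this substitution costs only $O_{\Gamma, f, \eta_0}(1)$.

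The stated main term will emerge from the trivial-representation contribution of the even formula. Using that $\psi$ is even gives $\int (e^u+e^{-u})g_{y,\eta}(u)\,du = 2(e^y - e^{-y})c_{0,\eta}$, while $\hat\phi_{\text{even}}(0) = -(\hat f(1) + \hat f(-1))$, so $\tfrac{1}{2}\hat\phi_{\text{even}}(0)\int(e^u+e^{-u})g_{y,\eta}(u)\,du$ produces exactly $-(\hat f(1)+\hat f(-1))(e^y - e^{-y})c_{0,\eta}$. The other trivial-side term $-\hat g_{y,\eta}(0)\hat\phi_{\text{even}}(1)$ will contribute $2y\hat f_{\text{even}}(2) = O_f(y)$, since $\hat g_{y,\eta}(0) = 2y$ and $\hat\phi_{\text{even}}(1) = -\hat f_{\text{even}}(2)$ using $\hat f(0) = 0$; the identity term is $O_{\Gamma, f}(1)$ because $g_{y,\eta}(0) = 1$ and $g_{y,\eta}''(0) = 0$ whenever $y > \eta$.

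The remaining work is bounding the spectral sums. For principal series at $s = 0$ in the even formula, $\hat g_{y,\eta}(0) = 2y$, and Proposition~\ref{spectral-bound} combined with the Schwartz decay of $\hat\phi_{\text{even}}$ will yield an absolutely convergent sum of total size $O_{\Gamma, f}(y)$. For $s \neq 0$, I will use $|\hat g_{y,\eta}(is/(2\pi))| \ll |s|^{-1}|\hat\psi(\eta s/(2\pi))|$ together with the same dyadic-interval bookkeeping as in the proof of Proposition~\ref{geom-trace-est} to obtain $O_{\Gamma, f, \eta_0}(\eta^{-2}+1)$. The odd spectral sum is analogous but produces no $y$-term since $\hat h_{y,\eta}(0) = 0$, and the complementary-series $p=0$ summand in the odd formula drops out because $\hat\phi_{\text{odd}}(0) = 0$. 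The main obstacle will be orchestrating these spectral estimates so that the $y$-growth is generated solely by the $s=0$ principal series and the $\hat g_{y,\eta}(0)$-weighted trivial-side term, while the $\eta^{-2}$ dependence is confined to the nonzero-$s$ tail—mirroring the strategy of Proposition~\ref{geom-trace-est} but adapted to the test function $g_{y,\eta}$ in place of $\tilde g_{y,\eta}$, which eliminates the $e^y$ factor from the leading spectral terms and shifts what was the main term there into the present error.
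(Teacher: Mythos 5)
Your proposal is essentially the paper's own proof: the same identity $1-\tfrac{e^{\ell}+e^{-\ell}}{e^{\ell}+e^{-\ell}-2\cos\theta}=-2\cos(\theta)w(\gamma)$, the same even/odd splitting with the even trace formula applied to $(g_{y,\eta},\cos\theta f_{\mathrm{even}})$ and the odd one to $(h_{y,\eta},\cos\theta f_{\mathrm{odd}})$ at a cost of $\mathrm{O}_{\Gamma,f,\eta_0}(1)$ for the short geodesics, the main term extracted from the trivial-representation integral $\int(e^u+e^{-u})g_{y,\eta}\,du=2(e^y-e^{-y})c_{0,\eta}$, and the same Proposition \ref{spectral-bound} plus Schwartz-decay bookkeeping for the spectral sums. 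The one small adjustment: for $0<|s|\le 1$ you should bound the spectral weight by $|\hat g_{y,\eta}(-s/2\pi)|\ll |\hat\psi(\eta s/2\pi)|\min(y,1/|s|)\le y$ (as the paper does via $\sin(sy)/s\ll\min(y,1/s)$) rather than by $1/|s|$ alone, since no lower bound on nonzero $|s|$ uniform in $p$ is available; these terms then simply join the $\mathrm{O}_{\Gamma,f}(y)$ contribution and the rest of your accounting goes through as in the paper.
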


\begin{proof} First, we calculate that $1 - \frac{e^{\ell(\gamma)} + e^{-\ell(\gamma)}}{e^{\ell(\gamma)} + e^{-\ell(\gamma)} - 2 \cos(\hol(\gamma))} = -2 \cos(\hol(\gamma)) w(\gamma)$. Thus we have 
\begin{align*}
\sum_{[\gamma]} \ell(\gamma_0)&\left(1 - \frac{e^{\ell(\gamma)} + e^{-\ell(\gamma)}}{e^{\ell(\gamma)} + e^{-\ell(\gamma)}- 2 \cos(\hol \gamma)}\right) g_{y, \eta}(\ell(\gamma)) f(\hol \gamma)\\
 &= -2 \sum \ell(\gamma_0) w(\gamma) g_{y, \eta}(\ell(\gamma)) \cos(\hol \gamma) f(\hol \gamma).
\end{align*}
We again use the decomposition $f(\theta) = \feven(\theta) +\fodd(\theta)$ to achieve
\begin{align*}
-2 \sum \ell(\gamma_0)& w(\gamma) g_{y, \eta}(\ell(\gamma)) \cos(\hol \gamma) f(\hol \gamma)\\
 &= -2 \sum \ell(\gamma_0) w(\gamma) g_{y, \eta}(\ell(\gamma)) \cos(\hol \gamma) \feven(\hol \gamma) \\
&-2 \sum \ell(\gamma_0) w(\gamma) h_{y, \eta}(\ell(\gamma)) \cos(\hol \gamma) \fodd(\hol \gamma)+ \mathrm{O}_{\Gamma, f,\eta_0}(1),
\end{align*}
where the first and second sums on the right hand side can be estimated by the even and odd trace formulas, respectively, of Proposition \ref{separable-trace}, and the error term accounts for geodesics with length less than $\eta_0$. For the first, we use the even function $\tilde f_{\text{even}} (\theta)= \cos(\theta) \feven (\theta)$ on the holonomy, and the even function $g_{y, \eta}$ on the length. Then the trace formula gives us the equality:
\begin{align*}
-2 \sum \ell(\gamma_0) &w(\gamma) g_{y, \eta}(\ell(\gamma)) \cos(\hol \gamma) f_{\text{even}} (\hol \gamma)\\
&=- \sum_{\nu,p} \left(m_\Gamma(\pi_{\nu,p})+ m_\Gamma(\pi_{\nu, -p})\right) \hat{\tilde f}_{\text{even}} (p) \hat g_{y, \eta}(\frac{i \nu}{2 \pi}) - \hat{\tilde f}_{\text{even}} (0) \int_{-\infty}^\infty (e^u + e^{-u}) g_{y, \eta}(u) \, du\\
&\quad+ 2 \hat g_{y, \eta}(0) \hat{\tilde f}_{\text{even}} (1) -\frac{1}{\pi} \vol(\Gamma \backslash G)\big(g_{y, \eta}''(0) \tilde f_{\text{even}} (0) +  g_{y, \eta}(0) \tilde f_{\text{even}} ''(0)\big).
\end{align*}

Similarly, we use the odd function $h_{y, \eta}$ defined in \eqref{g-y-eta-def} on the length and the odd function $\tilde f_{\text{odd}}(\theta) = \cos(\theta) \fodd(\theta)$ on the holonomy in the odd trace formula from Proposition \ref{separable-trace} to obtain
\begin{equation*}
-2 \sum \ell(\gamma_0) w(\gamma) h_{y, \eta}(\ell(\gamma)) \cos(\hol \gamma) \fodd(\hol \gamma)= - \sum_{\nu,p} (m_\Gamma(\pi_{\nu,p})- m_\Gamma (\pi_{\nu, -p})) \hat{\tilde f}_{\text{odd}}(-p) \hat{\tilde h}_{y, \eta}( \frac{i \nu}{2 \pi}). 
\end{equation*}

Note that since $\hat f(0) = 0$, we are left with principal series representations where $\nu = is$. Since $\feven$ and $\fodd$ are smooth functions, their Fourier transforms satisfy the Schwartz decay
\begin{equation}\label{hat-tilde-f-bound}
\hat{\tilde f}_{\text{even}}(p)  \ll_{f, n} \frac{1}{1 + |p|^n}, \quad \hat{\tilde f}_{\text{odd}}(p)  \ll_{f, n} \frac{1}{1 + |p|^n}.
\end{equation}

For the spectral terms, we use that 
\[\hat g_{y, \eta}(\frac{-s }{2 \pi}) = 2 \hat \psi(\frac{-\eta s}{2\pi}) \frac{\sin(sy)}{s}, \]
\[\hat h_{y, \eta}(\frac{-s}{2\pi}) = 2 \hat \psi(\frac{-\eta s}{2\pi}) \frac{\cos(sy)-1}{is}.\]

Then, together with the Schwartz bound $|\hat \psi(-\eta s/2 \pi)| \ll_m 1/(1 + \eta|s|)^m$ with, for example, $m=3$, the bound on $\hat{\tilde f}$ from \eqref{hat-tilde-f-bound}, and the bound $\frac{\sin(sy)}{s} \ll \min(y, \frac{1}{s})$, the contribution of the spectral terms is
\begin{align*}
 &\sum_{s,p} \big(m_\Gamma(\pi_{is,p})+ m_\Gamma(\pi_{is, -p})\big) \hat{\tilde f}_{\text{even}} (p) \hat g_{y, \eta}(\frac{-s}{2 \pi}) \\
 &= (\sum_{\substack{0 \leq |s| \leq 1 \\ p \in \mathbb{Z}}} + \sum_{\substack{1< |s| <1/\eta \\ p \in \mathbb{Z}}} + \sum_{\substack{1/\eta<|s| \\ p \in \mathbb{Z}}}) \big(m_\Gamma(\pi_{is,p})+ m_\Gamma(\pi_{is, -p})\big) \hat{\tilde f}_{\text{even}} (p) 2 \hat \psi(\frac{-\eta s}{2\pi}) \frac{\sin(sy)}{s}\\
 & \ll_{f,n,m} y \sum_{\substack{  0 \leq |s| \leq 1 \\p \neq 0}} m_\Gamma(\pi_{is, p}) \frac{1}{ |p|^n} +  \sum_{\substack{1< |s| <1/\eta \\ p \neq 0}} m_\Gamma(\pi_{is, p}) \frac{1}{|p|^n} \frac{1}{s} + \sum_{\substack{1/\eta<|s| \\ p \neq 0}} m_\Gamma(\pi_{is, p}) \frac{1}{|p|^n} \frac{1}{\eta^m |s|^{m+1}}.
 \end{align*}
 Similarly, noting that $\frac{\cos(sy)-1}{is}\ll \min(s y^2, \frac{1}{s}) \ll \min(y, \frac{1}{s})$, we obtain identical bounds for the odd spectral terms.

Then using the spectral bound from Proposition \ref{spectral-bound} on the multiplicities and breaking the sum into length $1$ intervals, we calculate that the above is
\begin{align*}
& \ll_{\Gamma, f, n, m} y \sum_{\substack{0\leq |s| <1 \\ p \neq 0}}  m_\Gamma(\pi_{is, p}) \frac{1}{|p|^n}  + \sum_{p \neq 0} \sum_{1 \leq k < 1/\eta} \frac{1}{k} \sum_{k \leq |s| < k+1} m_\Gamma(\pi_{is, p}) \frac{1}{|p|^n}\\
 &\quad+ \frac{1}{\eta^m}\sum_{p \neq 0} \sum_{k> 1/\eta} \frac{1}{k^{m+1}} \sum_{k \leq |s| \leq k +1} m_\Gamma(\pi_{is, p})\frac{1}{ |p|^n} \\
& \ll_{\Gamma,f, n, m} y \sum_{p \neq 0}\frac{1 + p^2}{|p|^n}  +\sum_{p \neq 0} \sum_{1 < k < 1/\eta} \frac{1}{k} \frac{k^2 + p^2}{|p|^n} +  \frac{1}{\eta^m} \sum_{p\neq 0}\sum_{k> 1/\eta} \frac{1}{k^{m+1}} \frac{k^2 + p^2}{|p|^n} 
\end{align*}
Choosing, say, $n = 4$ and $m = 3$, the spectral terms contribute $\ll_{\Gamma,f} y + \frac{1}{\eta^2} + 1$.

Then, we have that the contribution of the trivial representation is 
\begin{align*}
- \hat f_{\text{even}} (1) \int_{-\infty}^\infty (e^u + e^{-u}) g_{y, \eta}(u) \, du &= - \hat f_{\text{even}} (1) \int_{-\infty}^\infty (e^u + e^{-u}) \left( \int_{- \infty}^\infty \chi_{[-y, y]}(u-x) \psi_\eta(x) \, dx \right) \, du\\
&= - \hat f_{\text{even}} (1)\int_{-\infty}^\infty \psi_\eta(x) \left(\int_{x-y}^{x+y} e^u + e^{-u} \, du\right ) \, dx\\
&= - 2  \hat f_{\text{even}} (1) (e^y - e^{-y}) \int_{-\infty}^\infty \psi_\eta(x) e^x \, dx\\
&= -2 \hat f_{\text{even}} (1) (e^y - e^{-y}) c_{0, \eta}.
\end{align*}

Finally, the remaining terms are $\ll_{ \Gamma,f}y + \eta$.
\end{proof}
Next, we obtain an upper bound that will be useful for bounding the contribution of non-primitive geodesics in Proposition \ref{bias-count}.

\begin{lemma}\label{non-primitive-bound} For a fixed $k \geq 2$ and $0< \eta< \eta_0$, discrete, co-compact, torsion-free $\Gamma <\PSL$, $g_{y, \eta}$ defined as in \eqref{g-y-eta-def}, and $f: \mathbb{R}/2 \pi \mathbb{Z} \rightarrow \mathbb{R}$ smooth with $\hat f(0) =0$,
\[\sump_{[\gamma]} \ell(\gamma) g_{y, \eta}(k \ell(\gamma)) f(k \hol(\gamma)) \ll_{\Gamma, f, k,\eta_0} e^{y/k} \left(1 + \frac{1}{\eta^2}\right),\]
where the sum is over primitive geodesics.
\end{lemma}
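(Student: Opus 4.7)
The plan is to reduce to Proposition \ref{geom-trace-est} at rescaled parameters. Set $F(\theta) := f(k\theta)$ and $G(u) := g_{y,\eta}(ku)$; a change of variables in the convolution defining $g_{y,\eta}$ gives $G = g_{y/k,\,\eta/k}$, and a short Fourier computation yields $\hat F(p)=\hat f(p/k)$ if $k\mid p$ (and $0$ otherwise), so in particular $\hat F(0)=0$ and I may apply Proposition \ref{geom-trace-est} to the pair $(F,G)$ at the rescaled cutoff $y/k$ and smoothing $\eta/k$. Taking absolute values in its statement, and repeating the Schwartz-decay bookkeeping from its proof with $\eta$ replaced by $\eta/k$ and with the Schwartz decay of $\hat F$ inherited from $\hat f$ (up to $k$-power losses), together with Remark \ref{cs-decay} and Proposition \ref{spectral-bound}, I obtain
\[|\tilde L[F,G]| \ll_{\Gamma,f,k,\eta_0} e^{y/k}\big(1+\tfrac{1}{\eta^2}\big).\]

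Next I relate the target sum $T := \sump_{[\gamma_0]}\ell(\gamma_0)G(\ell(\gamma_0))F(\hol(\gamma_0))$ to $\tilde L[F,G]$. On primitive $\gamma_0$, the weight ratio is
\[w(\gamma_0)\big(e^{\ell(\gamma_0)}+e^{-\ell(\gamma_0)}\big) = 1 + \frac{2\cos(\hol(\gamma_0))}{e^{\ell(\gamma_0)}+e^{-\ell(\gamma_0)}-2\cos(\hol(\gamma_0))},\]
which is $O_\Gamma(1)$ uniformly in $\gamma_0$ and $1+O(e^{-\ell(\gamma_0)})$ once $\ell(\gamma_0) \geq 2$. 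Since $\Gamma$ is discrete, co-compact, and torsion-free, there are only $O_\Gamma(1)$ primitive classes with $\ell(\gamma_0)<2$, so the primitive part of $\tilde L[F,G]$ satisfies
\[\tilde L[F,G]\big|_{\mathrm{prim}} = T + O_{\Gamma,f}\bigg(1 + \sump_{\ell(\gamma_0)\leq y/k + \eta_0}\ell(\gamma_0)\,e^{-\ell(\gamma_0)}\bigg) = T + O_{\Gamma,f}(e^{y/k}),\]
by the Prime Geodesic Theorem bound $\sump_{\ell\leq T}\ell\cdot e^{-\ell}\ll\int_0^{T}e^{\ell}\,d\ell\ll e^{T}$. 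For the non-primitive part of $\tilde L[F,G]$, indexed by $j\geq 2$ with $\gamma=\gamma_0^j$, the support of $g_{y,\eta}(jk\,\cdot\,)$ forces $\ell(\gamma_0)\leq(y+\eta)/(jk)$, and the weight $w(\gamma_0^j)(e^{j\ell(\gamma_0)}+e^{-j\ell(\gamma_0)})$ is $O_\Gamma(1)$ by the same boundedness argument; thus the $j$-th tail is $\ll_{\Gamma,f}\sump_{\ell(\gamma_0)\leq(y+\eta)/(jk)}\ell(\gamma_0)\ll e^{2(y+\eta)/(jk)}$ by the Prime Geodesic Theorem. Summing over $j\geq 2$ is dominated by $j=2$, giving total non-primitive contribution $\ll_{\eta_0} e^{y/k}$.

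Combining, $T = \tilde L[F,G] + O_{\Gamma,f,\eta_0}(e^{y/k})$, which together with the first displayed bound yields $|T|\ll_{\Gamma,f,k,\eta_0} e^{y/k}(1+1/\eta^2)$ as required. The main obstacle is the bookkeeping in the rescaled Proposition \ref{geom-trace-est}: one needs to re-examine its Schwartz-decay argument with $\eta\mapsto\eta/k$ (and to bound both $\sum_{s,p} m_\Gamma(\pi_{is,p})|\hat F(-p)||c_{s,\eta/k}|$ and the analogous absolute sum arising from taking absolute values in its main estimate) to confirm that the overall dependence on $\eta$ is still $1+1/\eta^2$ rather than a higher negative power; this is routine with the same exponent choices $n=4$, $m=3$ used in that proof, with the resulting powers of $k$ entering the implicit constant (which is permitted since the implicit constant in our statement may depend on $k$).
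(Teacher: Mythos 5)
Your proposal is correct and takes essentially the same route as the paper: rescale to $f_k(\theta)=f(k\theta)$ and $g_{y,\eta}(k\,\cdot)=g_{y/k,\eta/k}$, apply Proposition \ref{geom-trace-est} (with the $\eta\mapsto\eta/k$ losses absorbed into the $k$-dependent constant, using Remark \ref{cs-decay} and Proposition \ref{spectral-bound}), and control both the weight change $w(\gamma)(e^{\ell(\gamma)}+e^{-\ell(\gamma)})=1+\mathrm{O}_\Gamma(e^{-\ell(\gamma)})$ and the non-primitive classes via the Prime Geodesic Theorem. The only difference is the cosmetic ordering — you pass from $\tilde L[F,G]$ back to the primitive sum, whereas the paper adds non-primitive classes and changes weights before invoking the trace-formula estimate.
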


\begin{proof}
We will estimate this sum using the trace formula. First we add in non-primitive geodesics, noting that for  $k \geq 2$,
\begin{align*}
\sum^{n.p.}_{[\gamma]} \ell(\gamma_0) g_{y, \eta} (k \ell(\gamma)) f(k \hol(\gamma)) &= \sum_{n \geq 2} \sump_{[\gamma]} \ell(\gamma) g_{y, \eta}(k n \ell(\gamma)) f(k n \hol(\gamma))\\
&\ll_\Gamma \sum_{n \geq 2}\quad \sump_{\ell(\gamma) \leq (y+\eta)/(nk)} \ell(\gamma)\\
& \ll_\Gamma \sum_{n \geq 2} \frac{y+\eta}{nk} \sump_{\ell(\gamma) \leq (y+\eta)/(nk)} 1 \\
& \ll_\Gamma \sum_{n \geq 2} e^{ 2(y+\eta)/(nk)} \ll_\Gamma e^{(y+\eta)/k}
\end{align*}
 by the Prime Geodesic Theorem. Then, we have that
\begin{align*}
\sump_{[\gamma]} \ell(\gamma) g_{y, \eta}(k \ell(\gamma)) f(k \hol(\gamma)) &= \sum_{[\gamma]} \ell(\gamma_0) g_{y, \eta}(k \ell(\gamma)) f(k \hol(\gamma)) + \mathrm{O}_\Gamma( e^{(y+\eta)/k})\\
& =\sum_{[\gamma]} \ell(\gamma_0) g_{y/k, \eta/k}( \ell(\gamma)) f(k \hol(\gamma)) + \mathrm{O}_\Gamma( e^{(y+\eta)/k}).
\end{align*}
Using that $w(\gamma)(e^{\ell(\gamma)} + e^{-\ell(\gamma)})= 1 + \mathrm{O}_\Gamma(e^{-\ell(\gamma)})$, we have that the first term equals
\[\sum_{[\gamma]} \ell(\gamma_0) w(\gamma) \tilde g_{y/k, \eta/k}(\ell(\gamma)) f(k \hol(\gamma)) + \mathrm{O}_\Gamma\bigg( \sum_{\ell(\gamma) \leq (y+\eta)/k} \ell(\gamma) e^{ - \ell(\gamma)}\bigg), \]
and the error term is $\ll_\Gamma e^{(y+\eta)/k}$ by the Prime Geodesic Theorem. We denote $f_k(\theta) = f(k \theta)$ and note that if $\hat f(0) = 0$, then $\hat f_k(0) = 0$. Then by Proposition \ref{geom-trace-est}, 
\begin{align*}
\sum_{[\gamma]}& \ell(\gamma_0) w(\gamma) \tilde g_{y/k, \eta/k}(\ell(\gamma)) f(k \hol(\gamma)) \\
&= 2 e^{y/k} \sum_{s, p} m_{\Gamma}(\pi_{is,p}) \Re( \hat f_k(-p)e^{isy} c_{s, \eta}) - (\hat f_k(1) + \hat f_k(-1)) e^{y/k} c_{0, \eta} + \mathrm{O}_{\Gamma, f, k, \eta_0}\left(\frac{1}{\eta^2} + 1\right) \\
& \ll_{\Gamma,k, \eta_0} e^{y/k}\bigg(\sum_{s,p} m_\Gamma(\pi_{is,p})  
|\hat f_k(-p)| |c_{s, \eta}| + |\hat f_k(1) + \hat f_k(-1)||c_{0, \eta}| \bigg) + \mathrm{O}_{\Gamma, f, k, \eta_0}\left(\frac{1}{\eta^2} + 1\right).
\end{align*}

Remark \ref{cs-decay} gives the bound $|c_{s, \eta}| \ll_{n,\eta_0} \frac{1}{(|s|+1)(|\eta s|^n+1)} $, and Schwartz decay of Fourier coefficients gives $\hat f_k(p) \ll_{m,f,k} \frac{1}{1+ |p|^m}$. Using the spectral bound from Proposition \ref{spectral-bound} on multiplicities,
\begin{align*}
&\sum_{s,p} m_\Gamma(\pi_{is,p}) \hat f_k(-p) |c_{s, \eta}| = \sum_{p} \bigg(\sum_{|s|<1/\eta} + \sum_{|s|\geq 1/\eta}\bigg)m_\Gamma(\pi_{is,p}) \hat f_k(-p) |c_{s, \eta}|\\
&\ll_{\Gamma,f,k, n,m, \eta_0}  \sum_{p \neq 0} \frac{1}{|p|^m}\sum_{0 \leq |s|< \frac{1}{\eta}} m_\Gamma(\pi_{is, p} ) \frac{1}{|s|+1}  +  \sum_{p \neq 0} \frac{1}{|p|^m}\sum_{|s|\geq \frac{1}{\eta}} m_\Gamma(\pi_{is, p} )\frac{1}{|s|^{n+1}\eta^n}\\
& \ll_{\Gamma,f,k, n,m, \eta_0} \sum_{p \neq 0} \frac{1}{|p|^m} \sum_{0 \leq R<\frac{1}{\eta}}  \frac{R^2 + p^2}{R+1} + \frac{1}{\eta^n} \sum_{p \neq 0} \frac{1}{|p|^m} \sum_{R\geq\frac{1}{\eta}}  \frac{R^2 + p^2}{R^{n+1}}
\end{align*}
where the inner sums are over integers. Choosing, say, $n = 3$ and $m=4$ results in  $\ll_{\Gamma,f, k, \eta_0} \frac{1}{\eta^2}+1$. 
\end{proof}

Finally, we are able to obtain an estimate for the bias count $L^P[f, g_{y, \eta}]$.

\begin{prop}\label{bias-count} Let $\Gamma < \PSL$ be a discrete, co-compact, torsion-free subgroup, $g_{y, \eta}$ be defined as in \eqref{g-y-eta-def}, $0<\eta \leq \eta_0< y$, and $f: \mathbb{R}/2 \pi \mathbb{Z} \rightarrow \mathbb{R}$ be a smooth function with $\hat f(0)=0$. Then
\begin{align*}
&L^P[f, g_{y, \eta}] := \sump_{[\gamma]} \ell(\gamma) g_{y, \eta}(\ell(\gamma))f(\hol(\gamma))\\
&= 2 e^y \sum_{s,p} m_\Gamma(\pi_{is,p})  \Re\left( \hat f(-p)e^{isy}c_{s, \eta}\right) - 2 e^y (\hat f(1) + \hat f(-1)) c_{0, \eta} + \mathrm{O}_{\Gamma, f, \eta_0}\left(e^{y/2}\left(1 + \frac{1}{\eta^2}\right)\right)\\
& = 2 e^y \sum_{\substack{s\neq 0, p}} m_\Gamma(\pi_{is,p}) \Re\left( \hat f(-p) e^{isy}c_{s, \eta}\right)  + e^y b_{f,\eta}  + \mathrm{O}_{\Gamma, f, \eta_0}\left(e^{y/2}\left(1 + \frac{1}{\eta^2}\right)\right)
\end{align*}
where $b_{f,\eta}$ is defined as in \eqref{b-f-eta-background}, $c_{s, \eta}$ is defined as in \eqref{c-s-eta-def}, and the sum is over the principal series representations $\pi_{is, p}$ in $L^2(\Gamma \backslash G)$.

\end{prop}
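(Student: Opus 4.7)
The plan is to combine Proposition~\ref{geom-trace-est}, Lemma~\ref{change-of-weight}, and Lemma~\ref{non-primitive-bound}, reducing $L^P[f, g_{y,\eta}]$ to the sum $\tilde L[f, g_{y,\eta}]$ whose spectral expansion is already in hand. The reduction proceeds through successive substitutions that account first for the difference between primitive and all conjugacy classes, and then for the change of weight from $\ell(\gamma_0) w(\gamma)(e^{\ell(\gamma)} + e^{-\ell(\gamma)})$ (built into $\tilde L$) to $\ell(\gamma_0)$ (built into $L^P$ after trimming non-primitives).

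First, I would pass from primitive to all conjugacy classes by reparametrizing non-primitives as $\gamma = \gamma_0^n$ with $\gamma_0$ primitive and $n \geq 2$, obtaining
\begin{equation*}
\sum_{[\gamma]} \ell(\gamma_0) g_{y,\eta}(\ell(\gamma)) f(\hol(\gamma)) = L^P[f, g_{y,\eta}] + \sum_{n \geq 2} \sump_{[\gamma]} \ell(\gamma) g_{y,\eta}(n \ell(\gamma)) f(n \hol(\gamma)).
\end{equation*}
Lemma~\ref{non-primitive-bound} applied with $k = n$ bounds the $n$-th term by $\ll e^{y/n}(1 + \eta^{-2})$, and the geometric tail is dominated by $n = 2$, contributing at most $\mathrm{O}_{\Gamma, f, \eta_0}(e^{y/2}(1 + \eta^{-2}))$ overall. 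I would then swap the weight $1$ for $w(\gamma)(e^{\ell(\gamma)} + e^{-\ell(\gamma)})$ via the identity $1 - w(\gamma)(e^{\ell(\gamma)} + e^{-\ell(\gamma)}) = -2\cos(\hol(\gamma)) w(\gamma)$, which exhibits $\sum_{[\gamma]} \ell(\gamma_0) g_{y,\eta}(\ell(\gamma)) f(\hol(\gamma))$ as $\tilde L[f, g_{y,\eta}]$ plus the change-of-weight remainder evaluated in Lemma~\ref{change-of-weight} as $-(\hat f(1) + \hat f(-1))(e^y - e^{-y}) c_{0,\eta} + \mathrm{O}(y + \eta^{-2})$.

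Substituting the spectral expansion of $\tilde L[f, g_{y,\eta}]$ from Proposition~\ref{geom-trace-est}, whose trivial-representation contribution is $-(\hat f(1)+\hat f(-1)) e^y c_{0,\eta}$, the two trivial-type contributions combine (up to a negligible $e^{-y}$ remainder) to $-2 e^y (\hat f(1) + \hat f(-1)) c_{0,\eta}$, yielding the first displayed formula. To obtain the second form, I would isolate the $s = 0$ spectral terms: since $c_{0,\eta}$ is real, they contribute $2 e^y c_{0,\eta} \sum_p m_\Gamma(\pi_{0,p}) \Re(\hat f(-p))$, the $p = 0$ term vanishes by the hypothesis $\hat f(0) = 0$, and invoking $\overline{\hat f(p)} = \hat f(-p)$ for real $f$ together with $\hat f(1) + \hat f(-1) = 2\Re\hat f(1)$, these combine with $-2 e^y (\hat f(1) + \hat f(-1)) c_{0,\eta}$ to produce exactly $e^y b_{f,\eta}$ with $b_{f,\eta}$ as in \eqref{b-f-eta-background}.

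There is no substantive new obstacle beyond the three preceding ingredients; the only delicate point is careful bookkeeping of real parts, signs, and factors of two, so that the three trivial-type contributions---the $s = 0$ spectral terms, the trivial representation from Proposition~\ref{geom-trace-est}, and the change-of-weight correction from Lemma~\ref{change-of-weight}---assemble with the correct coefficients into the bias constant rather than cancelling or doubling.
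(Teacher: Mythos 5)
Your overall route is exactly the paper's: write the all-classes, primitive-length-weighted sum as $L^P$ plus the non-primitive contribution, swap the weight via $1 - w(\gamma)(e^{\ell(\gamma)}+e^{-\ell(\gamma)}) = -2\cos(\hol(\gamma))w(\gamma)$ and Lemma \ref{change-of-weight}, insert the spectral expansion of $\tilde L[f,g_{y,\eta}]$ from Proposition \ref{geom-trace-est}, and then combine the two trivial-type contributions with the $s=0$ spectral terms into $e^y b_{f,\eta}$; your bookkeeping of real parts and factors of two in that last step is correct and matches \eqref{b-f-eta-background}.

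There is, however, one genuine gap: the treatment of the non-primitive sum $\sum_{n\ge 2}\sump_{[\gamma]}\ell(\gamma)g_{y,\eta}(n\ell(\gamma))f(n\hol(\gamma))$. Lemma \ref{non-primitive-bound} gives the bound $\ll_{\Gamma,f,k,\eta_0} e^{y/k}(1+\eta^{-2})$ with an implied constant that depends on $k$ (indeed its proof uses the Schwartz decay of $\hat f_k$, whose constants grow with $k$), and the number of values of $n$ that contribute grows linearly in $y$. So you cannot conclude ``the tail is dominated by $n=2$, contributing $\mathrm{O}_{\Gamma,f,\eta_0}(e^{y/2}(1+\eta^{-2}))$ overall'' directly from the lemma as stated: summing $k$-dependent constants over $2\le n\ll_\Gamma y$ is not justified without controlling their growth. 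The paper avoids this by invoking Lemma \ref{non-primitive-bound} only for $k=2,3$ and, for $k>3$, using the trivial Prime Geodesic Theorem bound $\sump_{\ell(\gamma)\le (y+\eta)/k}\ell(\gamma)\ll_\Gamma e^{2(y+\eta)/k}$, which is uniform in $k$ and already $\le e^{(y+\eta)/2}$ there, so the sum over $k>3$ is harmless. Your argument is repaired either by adopting that splitting or by reworking Lemma \ref{non-primitive-bound} to make the $k$-dependence of its constant explicit (it is at worst polynomial), but as written the uniform-in-$n$ summation is unsupported.
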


\begin{proof} To estimate $L^P[f, g_{y, \eta}]$ with the trace formula, we will also need to account for the addition of non-primitive geodesics and the removal of weights. Formally, this is accounted for by the equation
\[L^P[f, g_{y, \eta}] = (L^P[f, g_{y, \eta}] -  L[f,g_{y, \eta}]) + (L[f,g_{y, \eta}] - \tilde L[f, g_{y, \eta}]) + \tilde L[f,g_{y, \eta}],\]
where $\tilde L[f, g_{y, \eta}]$ is defined in \eqref{L-tilde-def} and $L[f, g_{y, \eta}]= \sum_{[\gamma]} \ell(\gamma) g_{y, \eta}(\ell(\gamma)) f(\hol(\gamma))$.

The contribution of the non-primitive geodesics is
\begin{align*}
- \sum_{[\gamma] \text{ n.p.}}  \ell(\gamma) g_{y, \eta}(\ell(\gamma)) f(\hol(\gamma))  &= - \sump_{[\gamma]} \sum_{k \geq 2} \ell(\gamma) g_{y, \eta}(\ell(\gamma^k)) f(\hol(\gamma^k))\\
&= -  \sum_{k \geq 2} \sump_{[\gamma]}\ell(\gamma) g_{y, \eta}(k \ell(\gamma)) f(k \hol(\gamma)).
\end{align*}
The $k = 2$ and $3$ terms are $\ll_{\Gamma,f, \eta_0} e^{y/2} (1 + \frac{1}{\eta^2})$ by Lemma \ref{non-primitive-bound}. For $k>3$, we use the Prime Geodesic Theorem to obtain the bound
\[\sump_{[\gamma]}\ell(\gamma) g_{y, \eta}(k \ell(\gamma)) f(k \hol(\gamma)) \ll \|f\|_\infty  \sump_{\ell(\gamma) \leq (y + \eta)/k} \ell(\gamma) \ll_{\Gamma} \|f\|_\infty e^{2(y+\eta)/k}\]
which is uniform in $k$. Summing over all $k>3$ yields $\ll_{\Gamma,f} e^{ (y+\eta)/2}$.

By Lemma \ref{change-of-weight}, the contribution from the change of weights is
\[L[f,g_{y, \eta}] - \tilde L[f, g_{y, \eta}] =-(\hat f(1) + \hat f(-1)) (e^y - e^{-y}) c_{0, \eta} + \mathrm{O}_{\Gamma, f, \eta_0}\Big(y + \frac{1}{\eta^2}\Big).\]
Proposition \ref{geom-trace-est} gives us the trace formula estimate for $\tilde L[f,g_{y, \eta}]$, which completes the proof.
\end{proof}

\begin{remark} In the case of $f(\theta) = \cos(n \theta)$, we then have that $\hat f(n) = \hat f(-n)= \frac{1}{2}$ and all other Fourier coefficients vanish. In that case, Proposition \ref{bias-count} simplifies to 
\[L^P[f_n, g_{y, \eta}] = e^y \frac{1}{2} \sum_{\nu \neq 0} (m_\Gamma(\pi_{\nu,n}) + m_\Gamma(\pi_{\nu, -n})) \Re(e^{isy}c_{s, \eta}) + e^y b_{n,\eta}  + \mathrm{O}_{\Gamma, \eta_0}\bigg(e^{y/2}\Big(1 + \frac{1}{\eta^2}\Big)\bigg) \]
where $b_{n,\eta} = (\big( \# (\nu, p) = (0,n) \big)- 2 \delta_1(n)) c_{0, \eta}$ is the bias constant. If there are no zero spectral parameters, then a bias only exists for $n =1$, in which case there is a bias in the negative direction.

In general, there is no bias exactly when $\sum_{p \neq 0} m_\Gamma(\pi_{0, p})\Re(\hat f(p)) = 2 \Re(\hat f(1))$; for example, when $m_\Gamma(\pi_{0, 1}) = 2$ and $m_\Gamma(\pi_{0, p}) = 0$ for all $|p|>1$. For positive $f$, we would typically expect the bias constant $b_{f, \eta}$ to be negative, although a large number of zero spectral parameters could potentially lead to a positive bias constant.

\end{remark}

\section{Distribution of Bias in Holonomy}\label{Bias-Distribution-Section}

In the last section, we established a smoothed count of the bias,
\[L^P[f,g_{ y, \eta}] = \sump_{[\gamma]} \ell(\gamma) g_{y, \eta}(\ell(\gamma)) f(\hol(\gamma)).\]
In Proposition \ref{bias-count}, we were able to obtain an asymptotic formula, in which the main term is a rapidly convergent  linear combination of terms of size $\asymp e^y$ as $y \rightarrow \infty$. In this section, we define the normalized count of bias by dividing by $e^y$:
\begin{equation}\label{E-f-g-def}
\begin{aligned}
E[f, g_{y, \eta}] &:= e^{-y} L^P[f,g_{y,\eta}]\\
&= 2 \sum_{\substack{s \neq 0\\ p\neq 0}} m_\Gamma(\pi_{is,p})  \Re\left(\hat f(-p) e^{isy}c_{s, \eta}\right)  + b_{f, \eta} + \mathrm{O}_{\Gamma, f, \eta_0}\left(e^{-y/2}\left(1 + \frac{1}{\eta^2}\right)\right),
\end{aligned}
\end{equation}
where $b_{f,\eta} = \big(\sum_{p \neq 0} m_\Gamma(\pi_{0, p}) \Re(\hat f(p))  -  2 \Re(\hat f(1))\big) 2 c_{0, \eta}$ and $0<\eta\leq \eta_0$. Note that we have the  bound
\begin{equation}\label{E-unif-bound}
E[f, g_{y, \eta}] \ll \sum_{\substack{s \neq 0\\ p\neq 0}} m_\Gamma(\pi_{is,p})  |\hat f(-p)| |c_{s, \eta}|  + |b_{f, \eta}| + \mathrm{O}_{\Gamma, f, \eta}(1)
\end{equation}
which is uniform in $y$, so $E[f, g_{y, \eta}]$ is compactly supported.

Ultimately, we will show that the values of $E[f, g_{y,\eta}]$ converge  to a probability distribution as $y \rightarrow \infty$. When the spectral parameters of $L^2(\Gamma \backslash G)$ are linearly independent, we will be able to establish an explicit distribution and show that this is symmetric about $b_{f,\eta}$. For any set of spectral parameters, we can establish that the average value of $E[f,g_{y, \eta}]$ is $b_{f, \eta}$, although the distribution may be asymmetric.

To estimate $E[f,g_{y, \eta}]$, we  start with the finite spectral cutoff
\begin{equation}\label{ETdef}
E^{(T)}[f, g_{y, \eta}] := 2 \sum_{\substack{|s|, |p|<T\\ s \neq 0}} m_\Gamma(\pi_{is,p}) \Re\left(\hat f(-p) e^{isy}c_{s, \eta}\right) +b_{f, \eta},
\end{equation}
noting that we are assuming $\hat f(0)= 0$.  We also note that due to the spectral bound from Prop. \ref{spectral-bound} only finitely many terms are non-zero; we index the spectral parameters $s_1$,..., $s_n$. In addition, each of these sums is compactly supported in a fixed interval since
\begin{equation}\label{ET-bound}
|E^{(T)}[f, g_{y, \eta}]| \leq 2 \sum_{s, p} m_{\Gamma}(\pi_{is,p}) |\hat f(-p)| |c_{s, \eta}| + |b_{f, \eta}|.
\end{equation}

We then use the Kronecker-Weyl theorem  to show that $E^{(T)}[f,g_{y, \eta}]$ is distributed according to a probability distribution, $\mu_{T, f, \eta}$. Given a continuous function $h: \mathbb{R}\rightarrow \mathbb{C}$, we define the function
\[h_{T,f,\eta}(x_1,..,x_n) = h\left(2 \sum_{j =1}^n m_\Gamma(\pi_{is_j,p_j})  \Re\left(\hat f(-p_j) e^{2 \pi i x_j}c_{s_j, \eta}\right) +b_{f, \eta}\right)\]
on $\mathbb{R}^n/\mathbb{Z}^n = \mathbb{T}^n$.

\begin{lemma}\label{ETLemma} Let $\Gamma < \PSL$ be a discrete, co-compact, torsion-free subgroup, and let $h:\mathbb{R} \rightarrow \mathbb{C}$ be continuous.  Suppose that $L^2(\Gamma \backslash G)$ has non-zero spectral parameters $s_1, ..., s_n$ with $|s_j|, |p_j|<T$. For a fixed $\eta>0$, as $y \rightarrow \infty$, $E^{(T)}[f, g_{y, \eta}]$ has the probability distribution
\[\lim_{Y \rightarrow \infty} \frac1Y \int_{\eta_0}^Y h(E^{(T)}[f, g_{y, \eta}])\, dy = \int_A h_{T,f,\eta}(a) \,da = \int_{\mathbb{R}} h(x) \,d\mu_{T, f, \eta}(x)\]
where $h_{T,f,\eta}$ is as above and $A$ is the closure of $\{(\frac{s_1 y}{2\pi},...,\frac{s_n y}{2\pi}), y \in \mathbb{R}\}$ in $\mathbb{T}^n$, which is a subtorus of dimension $r$. $E^{(T)}[f,g_{y, \eta}]$ is defined as in \eqref{ETdef}, $b_{f,\eta}$ as in \eqref{b-f-eta-background}, and $c_{s, \eta}$  as in \eqref{c-s-eta-def}.
\end{lemma}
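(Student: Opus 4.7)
The plan is to recognize $E^{(T)}[f, g_{y,\eta}]$ as a continuous trigonometric polynomial in $y$ with finitely many frequencies, and to apply the Kronecker-Weyl theorem verbatim. By Proposition \ref{spectral-bound}, only finitely many pairs $(s,p)$ with $|s|, |p| < T$ contribute a non-zero term to the sum defining $E^{(T)}[f, g_{y, \eta}]$ in \eqref{ETdef}; label these as $(s_1, p_1), \ldots, (s_n, p_n)$. Define the map $F: \mathbb{T}^n \to \mathbb{R}$ by
\[
F(x_1, \ldots, x_n) = 2 \sum_{j=1}^n m_\Gamma(\pi_{i s_j, p_j}) \Re\bigl(\hat f(-p_j)\, e^{2\pi i x_j}\, c_{s_j, \eta}\bigr) + b_{f, \eta},
\]
which is well-defined on $\mathbb{T}^n = \mathbb{R}^n/\mathbb{Z}^n$ by $\mathbb{Z}$-periodicity of $e^{2\pi i x_j}$. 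The uniform bound \eqref{ET-bound} shows that $F$ has image in a fixed compact interval, so $h_{T,f,\eta} := h \circ F$ is continuous and bounded on $\mathbb{T}^n$. By construction, $h_{T,f,\eta}\bigl(\tfrac{s_1 y}{2\pi}, \ldots, \tfrac{s_n y}{2\pi}\bigr) = h\bigl(E^{(T)}[f, g_{y, \eta}]\bigr)$ for every $y$.

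Next, I would apply Theorem \ref{Kronecker-Weyl} with $t_j = s_j/(2\pi)$ and the continuous function $h_{T,f,\eta}$ on $\mathbb{T}^n$, using the standard identification of $\mathbb{R}^n/\mathbb{Z}^n$ with the group of $n$-tuples of unit complex numbers via $x \mapsto e^{2\pi i x}$. This yields
\[
\lim_{Y \to \infty} \frac{1}{Y} \int_0^Y h_{T,f,\eta}\!\left(\tfrac{s_1 y}{2\pi}, \ldots, \tfrac{s_n y}{2\pi}\right) dy = \int_A h_{T,f,\eta}(a) \, da,
\]
where $A$ is the closure prescribed in the lemma, an $r$-dimensional subtorus with $r = \dim_\mathbb{Q} \mathrm{span}(s_1, \ldots, s_n)$, and $da$ is the normalized Haar measure on $A$. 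Replacing the lower limit $0$ by $\eta_0$ changes the left-hand side by at most $\eta_0 \|h_{T,f,\eta}\|_\infty / Y$, which vanishes as $Y \to \infty$, so the stated integral from $\eta_0$ equals the same limit.

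Finally, I would define the probability measure $\mu_{T, f, \eta}$ as the pushforward under $\Phi: A \to \mathbb{R}$, where $\Phi$ is the restriction to $A$ of the map $(a_1, \ldots, a_n) \mapsto 2 \sum_{j=1}^n m_\Gamma(\pi_{i s_j, p_j}) \Re(\hat f(-p_j) e^{2\pi i a_j} c_{s_j, \eta}) + b_{f, \eta}$. Since the Haar measure on $A$ has total mass one, $\mu_{T, f, \eta}$ is a Borel probability measure on $\mathbb{R}$, and the pushforward change-of-variables formula gives $\int_A h_{T,f,\eta}(a) \, da = \int_\mathbb{R} h(x) \, d\mu_{T, f, \eta}(x)$, which is the second equality of the lemma. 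There is no serious analytic obstacle here; the argument is essentially a translation of Kronecker-Weyl into the notation of the lemma. The only items requiring verification are continuity of $h \circ F$ on the compact torus (ensured by \eqref{ET-bound}) and the negligibility of the initial segment $[0, \eta_0]$, both of which are routine.
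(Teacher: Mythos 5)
Your proposal is correct and follows essentially the same route as the paper: express $h(E^{(T)}[f,g_{y,\eta}])$ as a continuous function $h_{T,f,\eta}$ evaluated at $(\tfrac{s_1y}{2\pi},\dots,\tfrac{s_ny}{2\pi})$, apply the Kronecker--Weyl theorem, and realize $\mu_{T,f,\eta}$ as the pushforward of the Haar measure on $A$ under the map $w_{T,f,\eta}$. Your explicit check that shifting the lower integration limit from $0$ to $\eta_0$ is negligible is a small detail the paper leaves implicit, but otherwise the arguments coincide.
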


\begin{proof} First we observe that
\[h(E^{(T)}[f,g_{y, \eta}]) = h_{T,f, \eta}\left(\frac{s_1 y}{2 \pi}, ..., \frac{s_n y}{2 \pi}\right),\]
where $h_{T,f, \eta}$ is defined above. Note that $h_{T,f, \eta}$ is a continuous function on the torus $\mathbb{T}^n$.

 Then by the Kronecker-Weyl Theorem,
\[\lim_{Y \rightarrow \infty} \frac1Y \int_{\eta_0}^Y h(E^{(T)}[f, g_{y, \eta}])\, dy = \int_A h_{T,f,\eta}(a) \,da,\]
where $A$ is the subtorus of $\mathbb{T}^n$ defined above and $da = d\mu_A(a)$, where $\mu_A$ is the Haar measure on $A$.

Recall that 
 \[h_{T,f,\eta}(x_1,..,x_n) = h\left(2 \sum_{j =1}^n m_\Gamma(\pi_{is_j,p_j})  \Re\left(\hat f(-p) e^{2 \pi i x_j}c_{s_j, \eta}\right) +b_{f, \eta}\right).\]
 Note that $w_{T, f, \eta}(x_1,..., x_n) := 2 \sum_{j =1}^n m_\Gamma(\pi_{is_j,p_j})  \Re(\hat f(-p) e^{2 \pi i x_j}c_{s_j, \eta}) +b_{f, \eta}$ is a real-valued function on $A$ and so, by a standard change of variables formula (see, for example \cite[Theorem 16.12]{Bill})), there exists a measure $\mu_{T, f, \eta}$ on $\mathbb{R}$ defined by 
 \[\mu_{T, f, \eta}(B) = \mu_A\left(w_{T, f, \eta}^{-1}(B)\right) \text{ for all } B \subseteq \mathbb{R} \text{ measurable},\]
 so that 
\[ \int_A h_{T,f, \eta}(a) \, da  = \int_{\mathbb{R}} h(x) \, d\mu_{T, f, \eta}(x).\]
\end{proof}

\begin{cor}\label{ET-mean} For any  $T>0$, $\eta>0$, $\mu_{T, f, \eta}$, and $\Gamma$ as in Lemma~\ref{ETLemma},

1) The distribution $\mu_{T, f, \eta}$ has mean $b_{f, \eta}$.

2) The distribution $\mu_{T, f,  \eta}$ is compactly supported on the interval 
\begin{multline*}
[b_{f, \eta} - 2 \sum_{|s|, |p|<T} m_\Gamma(\pi_{is,p})  |\hat f(-p)| |c_{s, \eta}|, b_{f, \eta} + 2 \sum_{|s|, |p|<T} m_\Gamma(\pi_{is,p})  |\hat f(-p)| |c_{s, \eta}|]\\
 \subset [b_{f, \eta} - 2\sum_{s, p} m_\Gamma(\pi_{is,p})  |\hat f(-p)| |c_{s, \eta}|,b_{f, \eta} + 2 \sum_{s,p} m_\Gamma(\pi_{is,p}) |\hat f(-p)| |c_{s, \eta}|].
\end{multline*}

\end{cor}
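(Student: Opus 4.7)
The plan is to deduce both statements from Lemma~\ref{ETLemma} combined with character orthogonality on the subtorus $A \subseteq \mathbb{T}^n$. Setting
\begin{equation*}
w_{T, f, \eta}(x_1, \ldots, x_n) := 2 \sum_{j=1}^n m_\Gamma(\pi_{is_j, p_j}) \Re\bigl(\hat f(-p_j) e^{2 \pi i x_j} c_{s_j, \eta}\bigr) + b_{f, \eta},
\end{equation*}
the change of variables in the proof of Lemma~\ref{ETLemma} already identifies $\mu_{T, f, \eta}$ with the pushforward of $\mu_A$ under $w_{T, f, \eta}$, so both parts reduce to statements about this pushforward.

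For part (1), I would apply Lemma~\ref{ETLemma} with $h(x) = x$: since $E^{(T)}[f, g_{y, \eta}]$ is bounded uniformly in $y$ by \eqref{ET-bound}, the composition $h_{T, f, \eta} = w_{T, f, \eta}$ is continuous on the compact torus $\mathbb{T}^n$ and the lemma produces $\int_{\mathbb{R}} x \, d\mu_{T, f, \eta}(x) = \int_A w_{T, f, \eta}(a) \, da$. The key step is to show that each coordinate character $(x_1, \ldots, x_n) \mapsto e^{\pm 2 \pi i x_j}$ restricts non-trivially to $A$ whenever $s_j \neq 0$: by Pontryagin duality, a character $e^{2 \pi i (k_1 x_1 + \cdots + k_n x_n)}$ is trivial on $A$ if and only if $k_1 s_1 + \cdots + k_n s_n = 0$, as one sees by pulling back along $y \mapsto (s_1 y/(2 \pi), \ldots, s_n y/(2 \pi))$. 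Since the index set for $E^{(T)}$ excludes $s_j = 0$, orthogonality of characters on $A$ annihilates each oscillating summand in $w_{T, f, \eta}$, leaving only $\int_A b_{f, \eta} \, da = b_{f, \eta}$.

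For part (2), the support claim is immediate from the triangle inequality: for every $(x_1, \ldots, x_n) \in A$,
\begin{equation*}
|w_{T, f, \eta}(x_1, \ldots, x_n) - b_{f, \eta}| \leq 2 \sum_{j=1}^n m_\Gamma(\pi_{is_j, p_j}) |\hat f(-p_j)| |c_{s_j, \eta}| \leq 2 \sum_{|s|, |p| < T} m_\Gamma(\pi_{is, p}) |\hat f(-p)| |c_{s, \eta}|,
\end{equation*}
where the second bound accounts for the (non-negative) $s = 0$ terms absent from the index set of $E^{(T)}$. Since $\mu_{T, f, \eta}$ is the pushforward of $\mu_A$ under $w_{T, f, \eta}$, any Borel set disjoint from the image of $w_{T, f, \eta}$ has $\mu_{T, f, \eta}$-measure zero, so the support lies inside the first claimed interval; passing to the second, larger interval is immediate by extending the sum over all spectral $(s, p)$. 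The only step requiring genuine work is the character orthogonality in part (1), which rests on a standard description of the dual of a closed subgroup of $\mathbb{T}^n$, and I anticipate no essential obstacles beyond that.
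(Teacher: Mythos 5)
Your proposal is correct and follows essentially the same route as the paper: part (1) is obtained by applying Lemma~\ref{ETLemma} with $h(x)=x$, and part (2) by viewing $\mu_{T,f,\eta}$ as the pushforward of $\mu_A$ under $w_{T,f,\eta}$, whose values lie in the stated interval, with the larger interval following by positivity of the added terms. The only difference is that you make explicit the character-orthogonality step on the subtorus $A$ (correctly noting that each single-coordinate character is non-trivial on $A$ precisely because $s_j \neq 0$, so the argument survives linear dependence), a detail the paper's proof leaves implicit.
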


\begin{proof}
The mean of the distribution is obtained by considering the function $h(x) = x$ in Lemma \ref{ETLemma}. The compact support follows immediately from considering that the distribution is of \allowbreak$\sum_{j=1}^n m_\Gamma(\pi_{is_j,p_j}) \Re(\hat f(-p_j) e^{2 \pi i x_j}c_{s_j, \eta}) +b_{f, \eta}$ which lies in this interval.
\end{proof}

If the spectral parameters $s_{1}, ..., s_n$ are linearly independent, then we are able to obtain a more explicit distribution in Lemma \ref{ETLemma}.

\begin{cor}\label{ET-measure-LI} If $s_1$, ..., $s_n$ are linearly independent over $\mathbb{Q}$, then $A = \mathbb{T}^n$ and for a fixed $\eta>0$,
\begin{align*}
\lim_{Y \rightarrow \infty}& \frac1Y \int_{\eta_0}^Y h(E^{(T)}[f, g_{y, \eta}]) \,dy \\
&= \int_0^1\cdots\int_0^1   h\Big(2 \sum_{j=1}^n m_\Gamma(\pi_{is_j,p_j})  \Re(\hat f(-p_j) e^{2 \pi i x_j}c_{s_j, \eta}) +b_{f, \eta})\Big) \,dx_1\cdots dx_n.
\end{align*}
\end{cor}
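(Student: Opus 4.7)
The proof will be essentially a direct application of Lemma~\ref{ETLemma} combined with part (1) of the Kronecker-Weyl theorem (Theorem~\ref{Kronecker-Weyl}), specialized to the linearly independent case. The work is almost entirely bookkeeping once the setup from Lemma~\ref{ETLemma} is in place.

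First, I would verify that linear independence of $s_1, \ldots, s_n$ over $\mathbb{Q}$ is equivalent to linear independence of $s_1/(2\pi), \ldots, s_n/(2\pi)$ over $\mathbb{Q}$: any rational relation $\sum q_j (s_j/(2\pi)) = 0$ is equivalent to $\sum q_j s_j = 0$ after clearing the common factor $1/(2\pi)$. Hence by Theorem~\ref{Kronecker-Weyl}(1), the closure $A$ of $\{(s_1 y/(2\pi), \ldots, s_n y/(2\pi)) : y \in \mathbb{R}\}$ in $\mathbb{T}^n$ has dimension $r = n$, i.e.\ $A = \mathbb{T}^n$.

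Second, on $A = \mathbb{T}^n$ the Haar measure is simply the product Lebesgue measure, which under the standard fundamental domain identification $\mathbb{T}^n \cong [0,1]^n$ becomes $dx_1 \cdots dx_n$. Thus in the conclusion of Lemma~\ref{ETLemma} we may replace $\int_A h_{T,f,\eta}(a)\, da$ by an iterated integral over $[0,1]^n$, yielding the displayed formula with integrand $h_{T,f,\eta}(x_1, \ldots, x_n)$, which expands to precisely
\[
h\Bigl(2 \sum_{j=1}^n m_\Gamma(\pi_{is_j,p_j}) \Re\bigl(\hat f(-p_j) e^{2\pi i x_j} c_{s_j,\eta}\bigr) + b_{f,\eta}\Bigr).
\]

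There is no substantive obstacle here: Lemma~\ref{ETLemma} already did the work of applying the Kronecker-Weyl theorem in full generality and pushing the Haar measure on $A$ forward to the measure $\mu_{T,f,\eta}$ on $\mathbb{R}$. The only content of the corollary is the elementary observation that linear independence of the spectral parameters forces the ambient subtorus $A$ to coincide with the entire torus $\mathbb{T}^n$, at which point the Haar integral becomes the familiar Lebesgue integral over the unit cube. The statement is essentially an unpacking of the hypothesis in the cleanest possible case, and I would write it as such — a short paragraph invoking Lemma~\ref{ETLemma} and Theorem~\ref{Kronecker-Weyl}(1), with the identification of $d\mu_{\mathbb{T}^n}$ as $dx_1 \cdots dx_n$ made explicit.
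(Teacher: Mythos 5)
Your proposal is correct and matches the paper's (implicit) argument exactly: the corollary is treated as an immediate consequence of Lemma~\ref{ETLemma} together with the observation, already made in the discussion following Theorem~\ref{Kronecker-Weyl}, that linear independence of the $s_j$ over $\mathbb{Q}$ forces the closure $A$ to be the full torus $\mathbb{T}^n$ with Haar measure $dx_1\cdots dx_n$. Nothing further is needed.
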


Regardless of linear dependence, we can establish a probability distribution for the normalized bias count, $E[f, g_{y, \eta}]$.

\begin{theorem}\label{Eydistribution} For every $\eta>0$, as $y \rightarrow \infty$, $E[f,g_{y,\eta}]$ has a probability distribution $\mu_{f,\eta}$ such that
\[\lim_{Y \rightarrow \infty} \frac1Y \int_{\eta_0}^Y h(E[f,g_{y, \eta}]) \,dy = \int_{\mathbb{R}} h(x) \,d \mu_{f,\eta}(x) = \mu_{f,\eta}(h)\]
for all Lipschitz functions $h:\mathbb{R} \rightarrow \mathbb{C}$. Further, the distribution $\mu_{f,\eta}(h) = \mu_{T, f,\eta}(h) + \mathrm{O}_{\Gamma, f,\eta,N}\left(\frac{k_h}{T^N}\right)$ where $k_h$ is the Lipschitz constant of $h$ and $\mu_{T, f,\eta}(h)$ is as in Lemma \ref{ETLemma}.
\end{theorem}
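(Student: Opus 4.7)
The plan is to approximate $E[f,g_{y,\eta}]$ by the finite spectral cutoff $E^{(T)}[f,g_{y,\eta}]$ of \eqref{ETdef}, apply Lemma~\ref{ETLemma} for fixed $T$, and then pass to the limit $T \to \infty$ via L\'evy's continuity theorem. The key preliminary estimate I would establish is the spectral tail bound
\[
\sum_{\substack{s \neq 0,\, p \neq 0\\ \max(|s|,|p|) \geq T}} m_\Gamma(\pi_{is,p})\, |\hat f(-p)|\, |c_{s,\eta}| \ll_{\Gamma,f,\eta,N} T^{-N}
\]
for every $N$. This should follow from the Schwartz decay of $\hat f(p)$ in $p$, the Schwartz decay of $c_{s,\eta}$ in $|\eta s|$ recorded in Remark~\ref{cs-decay}, and the Weyl-type spectral bound of Proposition~\ref{spectral-bound}, via exactly the dyadic splitting already used in the error analysis of Proposition~\ref{geom-trace-est}. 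Combined with the $O_{\Gamma,f,\eta_0}(e^{-y/2}(1+\eta^{-2}))$ remainder in \eqref{E-f-g-def}, this yields
\[
|E[f,g_{y,\eta}] - E^{(T)}[f,g_{y,\eta}]| \ll_{\Gamma,f,\eta,N} T^{-N} + e^{-y/2}\Bigl(1+\tfrac{1}{\eta^2}\Bigr),
\]
uniformly for $y \geq \eta_0$.

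Next, for any Lipschitz $h:\mathbb{R}\to\mathbb{C}$ with constant $k_h$, the pointwise Lipschitz estimate combined with the convergence of $\int_{\eta_0}^\infty e^{-y/2}\,dy$ gives
\[
\left|\frac{1}{Y}\int_{\eta_0}^Y h(E[f,g_{y,\eta}])\,dy - \frac{1}{Y}\int_{\eta_0}^Y h(E^{(T)}[f,g_{y,\eta}])\,dy\right| \ll k_h\, O_{\Gamma,f,\eta,N}(T^{-N}) + O_{\Gamma,f,\eta}(k_h/Y).
\]
Sending $Y \to \infty$ and invoking Lemma~\ref{ETLemma} on the second integral,
\[
\limsup_{Y \to \infty}\left|\frac{1}{Y}\int_{\eta_0}^Y h(E[f,g_{y,\eta}])\,dy - \mu_{T,f,\eta}(h)\right| \ll_{\Gamma,f,\eta,N} k_h T^{-N}.
\]
This simultaneously shows that the $Y$-averages are Cauchy as $Y \to \infty$ and that $\mu_{T,f,\eta}(h)$ is Cauchy as $T \to \infty$, with a common limit I call $\Lambda(h)$; moreover $|\Lambda(h) - \mu_{T,f,\eta}(h)| \ll_{\Gamma,f,\eta,N} k_h T^{-N}$, which will become the advertised error estimate once $\Lambda$ is identified with integration against a probability measure.

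To realize $\Lambda$ as $h \mapsto \mu_{f,\eta}(h)$, I would apply L\'evy's continuity theorem (Theorem~\ref{Levy}) to the probability measures $\{\mu_{T,f,\eta}\}_T$, which are uniformly compactly supported in a single interval by Corollary~\ref{ET-mean} and the absolute convergence behind \eqref{ET-bound}. Taking $h_\xi(x) = e^{-i\xi x}$, which is Lipschitz with constant $|\xi|$, the previous paragraph gives $\hat\mu_{T,f,\eta}(\xi) \to \Lambda(h_\xi)$ for every $\xi$ at rate $|\xi|T^{-N}$. Since each $\hat\mu_{T,f,\eta}$ is continuous in $\xi$ and the convergence is uniform on compact sets of $\xi$, the limit $\xi \mapsto \Lambda(h_\xi)$ is continuous, in particular at $\xi = 0$. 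Theorem~\ref{Levy} then supplies a probability measure $\mu_{f,\eta}$ with characteristic function $\xi \mapsto \Lambda(h_\xi)$ and convergence $\mu_{T,f,\eta} \to \mu_{f,\eta}$ in distribution. Weak convergence together with the uniform compact support forces $\mu_{T,f,\eta}(h) \to \mu_{f,\eta}(h)$ for every bounded continuous $h$, and hence $\Lambda(h) = \mu_{f,\eta}(h)$ for every Lipschitz $h$.

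The main obstacle is the tail bound in the first step: extracting Schwartz decay in both $p$ and $|\eta s|$ simultaneously while controlling multiplicities through Proposition~\ref{spectral-bound} requires the same careful dyadic bookkeeping as in Proposition~\ref{geom-trace-est} and Lemma~\ref{change-of-weight}. Everything downstream, namely the Lipschitz transfer of time averages, the interchange of the $Y$- and $T$-limits, and the invocation of L\'evy's theorem, is then essentially routine assembly of tools already in place.
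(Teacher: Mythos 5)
Your proposal is correct and follows essentially the same route as the paper: truncate to $E^{(T)}$, transfer the time average through the Lipschitz bound using the Schwartz/Weyl-law tail estimate, invoke Lemma \ref{ETLemma}, squeeze to obtain the limit functional with error $\mathrm{O}(k_h/T^N)$, and upgrade it to a probability measure via L\'evy's continuity theorem. Your two small variations -- verifying continuity of the limiting characteristic function at $0$ via locally uniform convergence of $\hat\mu_{T,f,\eta}$ rather than the paper's direct uniform bound on $E[f,g_{y,\eta}]$, and identifying the limit functional with the measure via weak convergence plus uniform compact support rather than Fourier inversion -- are equivalent in substance.
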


\begin{proof}
First, let 
\begin{align*}
\epsilon(f, y, \eta, T) &:= E[f,g_{y, \eta}] - E^{(T)}[f,g_{y, \eta}] \\
&=  2 \sum_{|s|\text{or} |p|>T} m_\Gamma(\pi_{is,p})  \Re\left(\hat f(-p) e^{isy}c_{s, \eta}\right)  + \mathrm{O}_{\Gamma, f, \eta_0}\left(e^{-y/2}\left(1 + \frac{1}{\eta^2}\right)\right).
\end{align*}

 Since $h$ is Lipschitz,
\[\frac1Y \int_{\eta_0}^Y h(E[f,g_{y, \eta}]) \, dy = \frac1Y \int_{\eta_0}^Y h(E^{(T)}[f, g_{y, \eta}]) \,dy + \mathrm{O}\left(k_h  \frac1Y \int_{\eta_0}^Y |\epsilon(f, y, \eta, T)| \, dy\right).\]

For the error term, we have the upper bound
\begin{equation*}
k_f \frac1Y \int_{\eta_0}^Y |\epsilon(f,y,\eta,T)| \, dy  \ll_{\Gamma, f, \eta}  k_h \bigg(\sum_{|s|>T} \sum_p + \sum_s \sum_{p>T}\bigg) m_\Gamma(\pi_{is,p}) |\hat f(-p)| |c_{s, \eta}| +  \frac{k_h}{Y}.
\end{equation*}

For the spectral terms, we  use the Schwartz decay $\hat f(p) \ll_N \frac{1}{|p|^N +1}$ and $c_{s, \eta} \ll_N \frac{1}{|s|^N+1}$, along with the spectral bound from Prop. \ref{spectral-bound}, to obtain that the error term is
\begin{align*}
&\ll_{\Gamma, f, \eta, N} k_h \left(\sum_{n>T} \sum_p + \sum_n \sum_{p>T} \right) \sum_{\substack{n<|s|<n+1}} m_\Gamma(\pi_{is,p})   \frac{1}{|p|^N+1} \frac{1}{|s|^N+1}\\
&\ll_{\Gamma, f, \eta, N} k_h \left(\sum_{n>T} \sum_{p<T}  \frac{n^2+ p^2}{n^N}+ \sum_{n<T}\sum_{p>T}\frac{n^2 + p^2}{p^N} + \sum_{n>T} \sum_{p>T}\frac{n^2+p^2}{n^N p^N}\right)\\
&\ll_{\Gamma, f, \eta, N} \frac{k_h}{T^{N-4}}.
\end{align*}

Let $N' = N-4$. This gives us the bound
\[ \frac1Y \int_{\eta_0}^Y h(E[f,g_{y, \eta}]) \, dy = \frac1Y \int_{\eta_0}^{Y} h(E^{(T)}[f, g_{y, \eta}]) \, dy + \mathrm{O}_{\Gamma,f,\eta,N'} \left( k_h \left(\frac{1}{T^{N'}} + \frac{1}{Y}\right) \right).\]

By Lemma \ref{ETLemma}, we know that the limit of the right hand side  (with respect to $Y$) exists. Therefore,
\begin{align*}
&\lim_{Y \rightarrow \infty} \frac1Y \int_{\eta_0}^{Y} h(E^{(T)}[f, g_{y, \eta}]) \, dy - \mathrm{O}_{\Gamma,f,\eta,N'} \left(\frac{ k_h}{T^{N'}} \right)\\ 
&\leq \liminf \frac1Y \int_{\eta_0}^Y h(E[f,g_{y, \eta}]) \, dy\leq \limsup\frac1Y \int_{\eta_0}^Y h(E[f,g_{y, \eta}]) \, dy\\
&\leq \lim_{Y \rightarrow \infty} \frac1Y \int_{\eta_0}^{Y} h(E^{(T)}[f,g_{y, \eta}]) \, dy +\mathrm{O}_{\Gamma,f,\eta,N'} \left( \frac{ k_h}{T^{N'}}  \right). 
 \end{align*}
 By taking $T$ to be arbitrarily large, we can show that $\lim\limits_{Y \rightarrow \infty} \frac1Y \int_{\eta_0}^Y h(E[f, g_{y, \eta}]) \, dy$ exists  and that
 \begin{equation}\label{mu-limit-eq}
  \lim_{Y \rightarrow \infty} \frac1Y \int_{\eta_0}^Y h(E[f,g_{y, \eta}]) \, dy = \mu_{T,f, \eta}(h) + \mathrm{O}_{\Gamma,f, \eta, N'}\left(\frac{k_h}{T^{N'}}\right).
\end{equation}

We can then formally define
\[\mu_{f,\eta}(h) := \lim_{Y \rightarrow \infty} \frac1Y \int_{\eta_0}^Y h\left(E[f,g_{y, \eta}]\right) \, dy.\]
We are not yet claiming that $\mu_{f, \eta}$ is a measure (although this will be shown). By \eqref{mu-limit-eq},  $\mu_{f, \eta}$ satisfies
\begin{equation}\label{mu-T-convergence}
\mu_{f,\eta}(h) = \mu_{T,f, \eta}(h) + \mathrm{O}_{\Gamma,f, \eta, N'}\left(\frac{k_h}{T^{N'}}\right).
\end{equation}

Now we will show that $\mu_{f, \eta}$ is a measure using a strengthened version of L\'evy's continuity theorem (Theorem \ref{Levy}). Observe that
\begin{align*}
\phi_{f, \eta}(\xi) := \int_{\mathbb{R}} e^{-i \xi x} \mu_{f, \eta}(x)  = \mu_{f, \eta}(h_\xi) &= \mu_{T, f, \eta}(h_\xi) + \mathrm{O}_{\Gamma,f, \eta, N'}\left(\frac{k_{\xi}}{T^{N'}}\right)\\
& = \hat \mu_{T, f, \eta}(\xi) + \mathrm{O}_{\Gamma,f, \eta, N'}\left(\frac{k_{\xi}}{T^{N'}}\right)
\end{align*}
for $h_{\xi}(x) = e^{-i \xi x}$ and $k_{\xi}$ the Lipschitz constant of $h_{\xi}$. Then as $T\rightarrow \infty$, we have that
\[\lim_{T \rightarrow \infty } \hat \mu_{T, f, \eta} (\xi) = \phi_{f, \eta}(\xi)= \lim_{Y \rightarrow \infty} \frac1Y \int_{\eta_0}^Y e^{-i \xi E[f, g_{y, \eta}]} \, dy.\]
To show that this is continuous at $\xi = 0$, we will show that $\lim_{\xi \rightarrow 0} \phi_{f, \eta}(\xi) = \phi_{f, \eta}(0) = 1$. 

By the uniform bound \eqref{E-unif-bound}, we know that $|E[f, g_{y, \eta}] | \leq M_{f, \eta}$ for some $M_{ f, \eta}>0$
which is uniform in $y$. Then, we have that for $|\xi|< \frac{1}{M_{f, \eta}}$, 
\[e^{i\xi E[f, g_{y, \eta}]} - 1 \ll M_{f, \eta} |\xi|.\]
Thus, we establish the  bound
\[| \lim_{Y \rightarrow \infty} \frac1Y \int_{\eta_0}^Y \left(e^{ i \xi E[f, g_{y, \eta}]}-1\right) \, dy| \leq \lim_{Y \rightarrow \infty} \frac1Y \int_{\eta_0}^Y M_{f, \eta} |\xi| \, dy = M_{f, \eta} |\xi|\]
which converges to zero as $|\xi| \rightarrow 0$. This proves that $\phi_{f, \eta}(\xi)$ is continuous at $\xi = 0$.

Then by L\'evy Continuity Theorem, there exists a measure $\alpha_{f, \eta}$ such that $\hat \alpha_{f, \eta}  = \phi_{f, \eta}$.  But note that $\phi_{f, \eta}(\xi) = \hat \mu_{f, \eta}(\xi)$ by definition, and by  Fourier inversion, $\alpha_{f, \eta} = \mu_{f, \eta}$. Thus $\mu_{f, \eta}$ is a measure, which completes the proof.
\end{proof}

\begin{cor}\label{mu-mean} For any $\eta>0$, $\mu_{f, \eta}$, and $\Gamma$ as in Theorem~\ref{Eydistribution},
	
	1) The distribution $\mu_{f, \eta}$ has mean $b_{f, \eta}$ defined in \eqref{b-f-eta-background}.
	
	2) The distribution $\mu_{f,  \eta}$ is compactly supported on the interval 
	\begin{equation*}
	 [b_{f, \eta} - 2\sum_{s, p} m_\Gamma(\pi_{is,p})  |\hat f(-p)| |c_{s, \eta}| - C_{\Gamma, f, \eta}, b_{f, \eta} + 2 \sum_{s,p} m_\Gamma(\pi_{is,p}) |\hat f(-p)| |c_{s, \eta}| + C_{\Gamma, f, \eta}],
	\end{equation*}
	where $C_{\Gamma, f, \eta}$ is a constant which accounts for the error term in \eqref{E-unif-bound}.
\end{cor}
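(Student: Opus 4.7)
The plan is to deduce both parts from Theorem \ref{Eydistribution}, Corollary \ref{ET-mean}, and the uniform bound \eqref{E-unif-bound}.

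For part (1), I would apply Theorem \ref{Eydistribution} with the Lipschitz function $h(x) = x$ (Lipschitz constant $1$). The quantitative relation \eqref{mu-T-convergence} then gives
\[\mu_{f, \eta}(h) = \mu_{T, f, \eta}(h) + \mathrm{O}_{\Gamma, f, \eta, N'}\bigl(1/T^{N'}\bigr)\]
for every $T$, while Corollary \ref{ET-mean}(1) asserts $\mu_{T, f, \eta}(h) = b_{f, \eta}$ identically in $T$. Letting $T \to \infty$ yields $\mu_{f, \eta}(h) = b_{f, \eta}$, which is the claim. Equivalently, one may work directly from the defining limit in Theorem \ref{Eydistribution} and the expansion \eqref{E-f-g-def}: the oscillatory spectral contributions $e^{isy}$ with $s \neq 0$ average to zero over $y \in [\eta_0, Y]$ as $Y \to \infty$ (each gives $\mathrm{O}(1/(|s|Y))$), the error $\mathrm{O}(e^{-y/2}(1 + 1/\eta^2))$ vanishes in the average, and only the constant $b_{f, \eta}$ survives.

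For part (2), set
\[M := 2\sum_{s, p} m_\Gamma(\pi_{is, p}) |\hat f(-p)| |c_{s, \eta}| + C_{\Gamma, f, \eta},\]
where $C_{\Gamma, f, \eta}$ absorbs the implicit constant in \eqref{E-unif-bound} (equivalently, the $\mathrm{O}_{\Gamma, f, \eta_0}(e^{-y/2}(1 + 1/\eta^2))$ error of \eqref{E-f-g-def}, uniformly on $y \geq \eta_0$). Then $E[f, g_{y, \eta}] \in [b_{f, \eta} - M, b_{f, \eta} + M]$ for every $y \geq \eta_0$. Consequently, for any Lipschitz $h:\mathbb{R} \to \mathbb{C}$ supported outside this interval, $h(E[f, g_{y, \eta}]) \equiv 0$, so Theorem \ref{Eydistribution} gives $\mu_{f, \eta}(h) = 0$. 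Approximating indicators of open subsets of $\mathbb{R} \setminus [b_{f, \eta} - M, b_{f, \eta} + M]$ by Lipschitz bump functions (standard convolution with a mollifier) then forces $\supp(\mu_{f, \eta}) \subseteq [b_{f, \eta} - M, b_{f, \eta} + M]$, as claimed.

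Neither step presents a genuine obstacle. Part (1) is little more than passing to the $T\to\infty$ limit in the quantitative convergence $\mu_{T, f, \eta} \to \mu_{f, \eta}$ already extracted in the proof of Theorem \ref{Eydistribution}. Part (2) is a direct transfer of the $y$-uniform bound \eqref{E-unif-bound} on $E[f, g_{y, \eta}]$ to the limiting measure; the only mild care needed is in choosing the constant $C_{\Gamma, f, \eta}$ so that the bound holds \emph{uniformly} for all $y \geq \eta_0$ (not merely asymptotically), which is immediate from the explicit error term in \eqref{E-f-g-def}.
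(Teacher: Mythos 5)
Your proposal is correct and follows essentially the same route as the paper: the mean is obtained by taking $h(x)=x$ in Theorem \ref{Eydistribution} (equivalently via Corollary \ref{ET-mean} and the convergence \eqref{mu-T-convergence}), and the compact support comes from the $y$-uniform bound on $E[f,g_{y,\eta}]$ from \eqref{E-f-g-def}--\eqref{E-unif-bound}, with $C_{\Gamma,f,\eta}$ absorbing the error term. Your extra step of testing against Lipschitz functions supported off the interval merely makes explicit what the paper treats as immediate.
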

\begin{proof}
The mean value of $\mu_{f, \eta}$ is obtained by considering the function $h(x) = x$ in Theorem \ref{Eydistribution}. The compact support results from the fact that the distribution is of the function
\[E[f, g_{y, \eta}] = 2 \sum_{\substack{s \neq 0\\ p\neq 0}} m_\Gamma(\pi_{is,p})  \Re\left(\hat f(-p) e^{isy}c_{s, \eta}\right)  + b_{f, \eta} + \mathrm{O}_{\Gamma, f, \eta}(1),\]
 and the constant $C_{\Gamma, f, \eta}$ accounts for the error $\mathrm{O}_{\Gamma, f, \eta}(1)$ from \eqref{E-unif-bound}.
\end{proof}

\section{Linear Independence Hypothesis}\label{LI-section}

Recall that in Corollary \ref{ET-measure-LI}, we were able to explicate the distribution of $E^{(T)}[f,g_{y, \eta}]$ when all distinct spectral parameters are linearly independent. This will allow us to prove an explicit formula for the measure $\mu_{f, \eta}$ using its Fourier transform, $\hat \mu_{f, \eta}$. For distinct parameters $s$ and $p$ where $m_\Gamma(\pi_{is, p})$ is non-zero and $s,p \neq 0$, we now fix an indexing where
\[\max\{|s_1|, |p_1|\} \leq \max\{|s_2|, |p_2|\} \leq ... \]
Note that by the spectral bound from Proposition \ref{spectral-bound}, there are finitely many parameters with $\max\{|s|, |p|\} \leq T$, so such an indexing is possible; this can also be made consistent with the indexing for $E^T[f,g_{y, \eta}]$ in Section \ref{Bias-Distribution-Section}.

\begin{prop}\label{mu-hat-formula} Let $\Gamma < \PSL$ be a discrete, co-compact, torsion-free subgroup. Suppose that all distinct non-zero spectral parameters of $L^2(\Gamma \backslash G)$ are linearly independent. Then,
	\begin{equation}\label{hat-mu-equation}
\hat \mu_{f,\eta}(\xi) =  e^{-i \xi b_{f, \eta}} \prod_{j =1}^\infty J_0\left(\xi 2 m_{\Gamma}(\pi_{i {s_j}, p_j}) | \hat f(-p_j) c_{s_j,\eta}|\right)
	\end{equation}
	where the product is over the representations $\pi_{i s_j, p}$ in $L^2(\Gamma \backslash G)$ with multiplicities $m_\Gamma(\pi_{i {s_j}, p_j})$, $b_{f,\eta}$ is defined as in \eqref{b-f-eta-background}, $c_{s_j, \eta}$ is defined as in \eqref{c-s-eta-def}, and $J_0$ is a Bessel function of the first kind, which is real and even.
\end{prop}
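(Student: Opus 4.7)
The plan is to compute $\hat \mu_{f,\eta}(\xi)$ by passing to the limit $T \to \infty$ in the Fourier transform of the finite-spectrum measure $\mu_{T,f,\eta}$ from Lemma \ref{ETLemma}, exploiting the fully explicit toroidal integral that the linear independence hypothesis produces via Corollary \ref{ET-measure-LI}.

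First I would apply Theorem \ref{Eydistribution} to the test function $h_\xi(x) = e^{-i\xi x}$, which is Lipschitz with constant $|\xi|$. For each fixed $\xi$, this yields
\[\hat \mu_{f,\eta}(\xi) = \mu_{f,\eta}(h_\xi) = \lim_{T \to \infty} \mu_{T,f,\eta}(h_\xi) = \lim_{T \to \infty} \hat \mu_{T,f,\eta}(\xi),\]
so it suffices to compute the right-hand side. Assuming linear independence of distinct non-zero spectral parameters, Corollary \ref{ET-measure-LI} represents $\hat \mu_{T,f,\eta}(\xi)$ as an integral over the full torus $\mathbb{T}^n$. Pulling out the factor $e^{-i\xi b_{f,\eta}}$ and observing that the integrand decouples over $j$, I obtain
\[\hat \mu_{T,f,\eta}(\xi) = e^{-i\xi b_{f,\eta}} \prod_{j=1}^n \int_0^1 \exp\!\bigl(-i\xi \cdot 2 m_\Gamma(\pi_{is_j,p_j}) \Re(\hat f(-p_j) e^{2\pi i x_j} c_{s_j,\eta})\bigr)\, dx_j.\]

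For each single factor I would pass to polar form, writing $\hat f(-p_j) c_{s_j,\eta} = r_j e^{i\phi_j}$ with $r_j = |\hat f(-p_j) c_{s_j,\eta}|$, so that the exponent becomes $-i\xi \cdot 2 m_\Gamma(\pi_{is_j,p_j}) r_j \cos(2\pi x_j + \phi_j)$. Using the $1$-periodicity of $\cos(2\pi\,\cdot)$ to shift $x_j \mapsto x_j - \phi_j/(2\pi)$, together with the integral representation $J_0(z) = \frac{1}{2\pi}\int_0^{2\pi} e^{-iz\cos\theta}\,d\theta$ and the evenness $J_0(-z) = J_0(z)$, each factor collapses to $J_0\!\bigl(2\xi m_\Gamma(\pi_{is_j,p_j}) r_j\bigr)$, matching the product in \eqref{hat-mu-equation}.

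The last step is to justify passing to the limit $T \to \infty$ in this finite product. Since $J_0(z) = 1 + O(z^2)$ near $z=0$, the infinite product converges absolutely provided
\[\sum_{j=1}^\infty \bigl(m_\Gamma(\pi_{is_j,p_j}) |\hat f(-p_j)|\,|c_{s_j,\eta}|\bigr)^2 < \infty,\]
which follows from exactly the same combination of ingredients already used in the proof of Proposition \ref{bias-count}: the Schwartz decay of $\hat f(p)$ in $p$, the decay of $c_{s,\eta}$ in $s$ from Remark \ref{cs-decay}, and the polynomial spectral bound of Proposition \ref{spectral-bound}. I expect the main (mild) obstacle to be only this bookkeeping together with confirming uniform control in $T$; the algebraic core is simply the decoupling of a Haar integral over $\mathbb{T}^n$ into Bessel integrals, which is direct once Corollary \ref{ET-measure-LI} is in hand.
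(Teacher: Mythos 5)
Your proposal is correct and follows essentially the same route as the paper: apply Theorem \ref{Eydistribution} with $h_\xi(x)=e^{-i\xi x}$ to reduce to $\lim_{T\to\infty}\hat\mu_{T,f,\eta}(\xi)$, use Corollary \ref{ET-measure-LI} to write this as a torus integral, decouple it into one-dimensional factors, and evaluate each via the polar-form change of variables and the standard integral representation of $J_0$. The only cosmetic differences are that the paper reduces to the real cosine form of the Bessel integral (NIST 10.9.1) rather than the complex-exponential form, and it handles convergence of the infinite product by noting that the bound \eqref{mu-T-convergence} holds for arbitrary cutoffs, whereas you argue via $J_0(z)=1+\mathrm{O}(z^2)$ and summability of the coefficients, both of which rest on the same decay estimates.
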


\begin{proof}
	Let $\hat \mu_{f,\eta}(\xi) = \int_{\mathbb{R}} e^{-i \xi x} d \mu_{f,\eta}(x)$. By Theorem \ref{Eydistribution}, the measure $\mu_{f, \eta}$ is the limit of the measures $\mu_{T, f, \eta}$, and so 
	\[\hat \mu_{f,\eta}(\xi)  = \lim_{T \rightarrow \infty}  \int_{\mathbb{R}} e^{-i \xi x}\, d \mu_{T,f, \eta}(x).\]
	
	Then by Corollary \ref{ET-measure-LI}, this is equal to 
	\begin{align*}
	&\lim_{n \rightarrow \infty} \int_0^1... \int_0^1 e^{-i \xi(2 \sum_{j =1}^n m_\Gamma(\pi_{i{s_j},p_j}) \Re(\hat f(-p_j) e^{2 \pi i x_j}c_{{s_j}, \eta}) +b_{f, \eta}))}\, dx_1 ... dx_n\\
	&= e^{-i \xi b_{f, \eta}} \prod_{j = 1}^\infty \int_0^1 e^{-i \xi 2  m_\Gamma(\pi_{i{s_j},p_j})  \Re(\hat f(-p_j) c_{s_j, \eta} e^{2 \pi i x_j})} \, dx_j = e^{-i \xi b_{f, \eta}} \prod_{j = 1}^\infty \hat \mu_{f, \eta,j}(\xi),
	\end{align*}
	where $\hat \mu_{f,\eta,j}(\xi) =  \int_0^1 e^{-i \xi 2 m_\Gamma(\pi_{i{s_j},p_j}) \Re( \hat f(-p_j) c_{s_j, \eta} e^{2 \pi i x_j})} \, dx_j$. %(We note that this limit may contain additional values of $n$ -- rather than just the cutoff values for a particular $T$ -- but since this limit exists, it will also equal the limit of the subsequence as $T \rightarrow \infty$.)
	We note that the result from Corollary \ref{ET-measure-LI}, as well as the bound \eqref{mu-T-convergence}, apply to the sum up to an arbitrary $n$ (rather than a cutoff at a particular $T$), so this will converge.
	We compute that $\Re(\hat f(-p_j) c_{s_j, \eta} e^{2 \pi i x_j}) = |\hat f(-p_j) c_{{s_j}, \eta}| \cos(2 \pi x_j + \arg(\hat f(-p_j)) + \arg c_{{s_j},\eta})$. Then, using a change of variables we have
	\begin{align*}
	\hat\mu_{f,\eta,j}(\xi) &= \int_0^1 e^{-i \xi 2  m_\Gamma(\pi_{i{s_j},p_j})  |\hat f(-p_j) c_{{s_j}, \eta}| \cos( 2 \pi x_j + \arg(\hat f(-p_j)) + \arg c_{{s_j}, \eta})} \, d x_j\\
	&= \frac{1}{2\pi} \int_0^{2\pi} e^{-i \xi 2 m_\Gamma(\pi_{i{s_j},p_j}) |\hat f(-p_j) c_{{s_j}, \eta}| \cos(  x) } \, d x.
	\end{align*}
Then, using symmetry, we conclude that
\begin{align*}
\hat\mu_{f,\eta,j}(\xi) &=  \frac{1}{2\pi} \int_0^{2\pi} e^{-i \xi 2 m_\Gamma(\pi_{i{s_j},p_j}) |\hat f(-p_j) c_{{s_j}, \eta}| \cos(x)} dx \\
&= \frac{1}{\pi} \int_{0}^\pi \cos(\xi 2 m_\Gamma(\pi_{i{s_j},p_j})  |\hat f(-p_j) c_{{s_j}, \eta}| \cos(x)) dx \\
&= J_0\left(\xi 2 m_\Gamma(\pi_{i{s_j},p_j}) |\hat f(-p_j) c_{{s_j}, \eta}|\right),
\end{align*}
	where $J_0$ is a Bessel function of the first kind in the integral form given by \cite[Eq. 10.9.1]{NIST}.
\end{proof}

\begin{remark}\label{LI-relax-remark} The linear independence condition in Corollary \ref{ET-measure-LI}, Proposition \ref{mu-hat-formula}, and Theorem \ref{LI-Theorem} -- that all distinct spectral parameters are linearly independent -- is necessary when $\hat f(-p) \neq 0$ for all $p \neq 0$. However, when $\hat f(-p) = 0$ for a particular $p$, then the corresponding multiplicities disappear from \eqref{E-f-g-def}, and we need only the linear independence of distinct spectral parameters $s$ corresponding to $\pi_{is, p}$  in $L^2(\Gamma \backslash G)$ with $\hat f(p) \neq 0$. 

For example, consider $f(\theta) = \cos(\theta)$, which has $\hat f(1) =\hat f(-1) =\frac{1}{2}$ and all other Fourier coefficients vanish. Therefore, the multiplicities of $\pi_{is, p}$ for $p \neq 1, -1$ do not appear in the sum \eqref{E-f-g-def}, and thus we only need to consider the linear independence of distinct spectral parameters $s$ of $\pi_{is, 1} \simeq \pi_{-is, -1}$ in $L^2( \Gamma \backslash G)$.
\end{remark}

When the linear independence condition is met, we can then directly compute the measure $\mu_{f,\eta}$. 

\begin{theorem}\label{LI-Theorem} Let $\Gamma < \PSL$ be a discrete, co-compact, torsion-free subgroup. Suppose all distinct spectral parameters of $L^2(\Gamma \backslash G)$ are linearly independent. The distribution $\mu_{f,\eta}$ from Theorem \ref{Eydistribution} has $d \mu_{f,\eta}(x) = p_{f,\eta}(x) \, dx$ where the distribution function is
	\[ p_{f,\eta}(x) = \frac{1}{( 2\pi)^2} \int_{-\infty}^\infty \prod_{j = 1}^\infty  J_0\left(\xi 2 m_{\Gamma}(\pi_{i {s_j}, p_j}) | \hat f(-p_j) c_{s_j,\eta}|\right) \cos(\xi (x- b_{f, \eta})) \,d \xi\]
	where $J_0$ is a Bessel function of the first kind and $c_{s, \eta}$ are defined as in \eqref{c-s-eta-def}. The distribution $\mu_{f,\eta}$ is symmetric about $b_{f,\eta}$ (defined in \eqref{b-f-eta-background}), and a bias exists if and only if $b_{f, \eta} \neq 0$.
\end{theorem}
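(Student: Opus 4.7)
The plan is to recover the density $p_{f,\eta}$ by Fourier inversion from the formula for $\hat\mu_{f,\eta}$ proved in Proposition \ref{mu-hat-formula}, and then to read off the symmetry statement directly from the parity of the inverted integrand.

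First, I would verify that $\hat\mu_{f,\eta}$ is absolutely integrable on $\mathbb{R}$, so that classical Fourier inversion applies and produces a continuous probability density $p_{f,\eta}$. Since $|e^{-i\xi b_{f,\eta}}|=1$, this reduces to integrability of the infinite Bessel product. I would combine the uniform bound $|J_0(x)|\le 1$ with the asymptotic $|J_0(x)|\ll |x|^{-1/2}$ as $|x|\to\infty$: selecting finitely many indices $j_1,\dots,j_N$ for which $\alpha_{j_k}:=2m_\Gamma(\pi_{is_{j_k},p_{j_k}})|\hat f(-p_{j_k})c_{s_{j_k},\eta}|$ is nonzero, those factors jointly contribute $\ll_N |\xi|^{-N/2}$ for $|\xi|$ large, integrable already for $N\ge 3$, while the remaining factors are bounded by $1$. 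The Schwartz decay of $\hat f(-p)$ and of $c_{s,\eta}$ from Remark \ref{cs-decay}, together with the multiplicity bound of Proposition \ref{spectral-bound}, guarantees that the infinite product converges absolutely and uniformly on compacta to a continuous function.

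With integrability in hand, Fourier inversion applied to Proposition \ref{mu-hat-formula} gives
\[
p_{f,\eta}(x) \;=\; \frac{1}{2\pi}\int_{-\infty}^\infty e^{i\xi x}\hat\mu_{f,\eta}(\xi)\,d\xi \;=\; \frac{1}{2\pi}\int_{-\infty}^\infty e^{i\xi(x-b_{f,\eta})}\prod_{j=1}^\infty J_0\bigl(\xi\cdot 2m_\Gamma(\pi_{is_j,p_j})|\hat f(-p_j)c_{s_j,\eta}|\bigr)\,d\xi.
\]
Because $J_0$ is real-valued and even, each Bessel factor, and hence the product, is a real even function of $\xi$. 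Therefore the imaginary part of the integrand integrates to zero and $e^{i\xi(x-b_{f,\eta})}$ may be replaced by $\cos(\xi(x-b_{f,\eta}))$, producing the claimed integral representation for $p_{f,\eta}$.

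The symmetry about $b_{f,\eta}$ is then immediate from the parity identity $\cos(\xi(x-b_{f,\eta}))=\cos(\xi(b_{f,\eta}-x))$, which gives $p_{f,\eta}(2b_{f,\eta}-x)=p_{f,\eta}(x)$. Combined with Corollary \ref{mu-mean}, which identifies $b_{f,\eta}$ as the mean of $\mu_{f,\eta}$, this shows the distribution is symmetric about its mean $b_{f,\eta}$, whence a bias exists if and only if $b_{f,\eta}\neq 0$. The main obstacle in carrying out the above is the integrability check: the infinite Bessel product must be shown to decay fast enough in $|\xi|$ even though most factors are close to $1$, and the required polynomial decay is produced by extracting enough nontrivial Bessel factors, which is justified by the Schwartz decay in Remark \ref{cs-decay} and the spectral counting bound of Proposition \ref{spectral-bound}.
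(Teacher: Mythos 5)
Your proposal is correct and follows essentially the same route as the paper: Fourier inversion of the formula for $\hat\mu_{f,\eta}$ from Proposition \ref{mu-hat-formula}, with the realness and evenness of $J_0$ converting $e^{i\xi(x-b_{f,\eta})}$ into $\cos(\xi(x-b_{f,\eta}))$ and yielding the symmetry about $b_{f,\eta}$; your integrability check for the infinite Bessel product is an additional justification that the paper leaves implicit. Note only that standard inversion under the convention $\hat\mu(\xi)=\int e^{-i\xi x}\,d\mu(x)$ produces the constant $\tfrac{1}{2\pi}$, as in your display, rather than the $\tfrac{1}{(2\pi)^2}$ appearing in the stated formula, so that discrepancy lies in the paper's normalization rather than in your argument.
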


\begin{proof}
	The distribution function $p_{f, \eta}(x)$ can be recovered by taking the Fourier transform of $\hat \mu_{f,\eta}(\xi)$ and using Proposition \ref{mu-hat-formula}. This give us
	\begin{align*}
	p_{f,\eta}(x) &= \frac{1}{(2\pi)^2} \int_{-\infty}^{\infty} \hat \mu_{f,\eta}(\xi) e^{i \xi x} \,d \xi\\
	&=\frac{1}{(2\pi)^2} \int_{-\infty}^{\infty} e^{-i \xi b_{f, \eta}} \prod_{j = 1}^\infty  J_0\left(\xi 2 m_{\Gamma}(\pi_{i {s_j}, p_j}) | \hat f(-p_j) c_{s_j,\eta}|\right) e^{ i \xi x}\, d \xi \\
	&= \frac{1}{( 2\pi)^2} \int_{-\infty}^\infty \prod_{j = 1}^\infty J_0\left(\xi 2 m_{\Gamma}(\pi_{i {s_j}, p_j}) | \hat f(-p_j) c_{s_j,\eta}|\right) \cos\left(\xi (x- b_{f, \eta})\right)\, d \xi
	\end{align*}
	which is symmetric about $b_{f, \eta}$ since $J_0$ is real and even.
\end{proof}

\begin{remark} We remark that $p_{f, \eta}$ is non-negative. This is apparent since $\mu_{T, f, \eta}$ are non-negative,  $\lim_{T \rightarrow \infty} \mu_{T, f, \eta} = \mu_{f, \eta}$, and $d \mu_{f, \eta}= p_{f, \eta}(x)\, dx$. However, we can also observe non-negativity from this specific construction.  In Proposition \ref{mu-hat-formula}, we show that $\hat \mu_{f, \eta}$ can be written as the product \eqref{hat-mu-equation}. Then the Fourier inversion formula can also be constructed as the convolution
	\[p_{f, \eta}(x) = \left(\mu_{f, \eta, 1} \ast \mu_{f, \eta, 2} \ast  .... \right)(x- b_{f, \eta}),\]
where the factor $e^{-i \xi b_{f, \eta}}$ contributes to a shift by $b_{f, \eta}$.

Each $\hat \mu_{f, \eta, j}$ was defined as
\[\hat \mu_{f,\eta,j}(\xi) =  \int_0^1 e^{-i \xi 2 m_\Gamma(\pi_{i{s_j},p_j}) \Re( \hat f(-p_j) c_{s_j, \eta} e^{2 \pi i x})} \, dx.\]
This can also be constructed from the measure obtained by applying the Kronecker-Weyl Theorem to 
\[\lim_{X \rightarrow \infty} \frac{1}{X} \int_{0}^X h(2 m_\Gamma(\pi_{i{s_j},p_j}) \Re( \hat f(-p_j) c_{s_j, \eta} e^{2 \pi i x})) \, dx = \int_0^1 h(x) \,d \mu_{f, \eta, j}(x),\]
where the left hand side simplifies to $\int_0^1  h(2 m_\Gamma(\pi_{i{s_j},p_j}) \Re( \hat f(-p_j) c_{s_j, \eta} e^{2 \pi i x})) \, dx$ by periodicity. Choosing $h_{\xi}(x) = e^{-i \xi x}$ yields that $\hat \mu_{f, \eta, j}(\xi) = \int_0^1 e^{-i \xi x} \, d \mu_{f, \eta, j}(x)$ is in fact the Fourier transform of $\mu_{f, \eta, j}$. In addition, $\mu_{f, \eta, j}$ arises as the pushforward measure $\mu_{f, \eta, j}(B) =  |w_{f, \eta,j}^{-1}(B)|$ where $w_{f, \eta, j}(x)=2 m_\Gamma(\pi_{i{s_j},p_j}) \Re( \hat f(-p_j) c_{s_j, \eta} e^{2 \pi i x})$, and the Lebesgue measure  is non-negative.
\end{remark}

\section{Linear Dependence Example}\label{LD-section}

The linear independence hypothesis in Theorem \ref{LI-Theorem} is not satisfied when there is an arithmetic progression of spectral parameters, which is known to be true for certain classes of dihedral forms on arithmetic hyperbolic 3-manifolds. Although there are explicit descriptions of dihedral forms on non-compact hyperbolic surfaces (see \cite{Luo}, \cite{Huang}, for example), we were not able to find an explicit construction of dihedral forms on compact hyperbolic 3-manifolds in the literature. Thus, we present a general approach to the construction of dihedral forms on compact arithmetic hyperbolic 3-manifolds (which we refer to as co-compact dihedral forms) and provide an explicit example to show that dihedral representations $\pi_{\nu, p}$ can appear with  integral weight $p \neq 0$ which interferes with the bias distribution. In particular, even if we specialize to the function $f(\theta) = \cos(\theta)$ on the holonomy, which then only requires linear independence the spectral parameters of representations $\pi_{\nu, 1}$ in Theorem \ref{LI-Theorem} (see Remark \ref{LI-relax-remark}), we find an example where there is an arithmetic progression of spectral parameters. We construct these forms from Hecke characters whose angular frequencies lie in arithmetic progressions, which correspond to arithmetic progressions of spectral parameters.

\subsection{General construction of  co-compact dihedral forms}

Let $D$ be a division quaternion algebra over a number field $F$, $K$ a quadratic extension of $F$, $\chi$ an non-trivial Hecke character of $K$, and $\pi(\chi)$ the induced dihedral representation on $\GL_2(\mathbb{A}_F)$. If $\pi(\chi_v)$ is a discrete series representation at each place of ramification $v$ of $D$, then by the Jacquet-Langlands correspondence (see \cite{JL}, \cite[Theorem 10.5]{Gelbart}), $\pi(\chi)$ lifts to a representation $\pi$ on $D^\times( \mathbb{A}_F)$. The co-compact subgroup $\Gamma$ of $\PSL$, with level depending on local representations $\pi(\chi_v)$ (and thus on the local characters $\chi_v$, which are fixed in our construction), is obtained by embedding an order of the quaternion algebra $D$ into $M_2(\mathbb{C})$ and taking the group of units, and the dihedral representation of $D$ then corresponds to a dihedral automorphic form on $\Gamma  \backslash \PSL$, a compact manifold. The dihedral forms may have some non-trivial nebentypus, but we may obtain dihedral forms of trivial nebentypus by passing to a fixed finite index subgroup $\Gamma_0<\Gamma$.

To obtain dihedral forms, it suffices to construct a Hecke character $\chi$ of $K$ such that $\pi(\chi_{v})$ is discrete series at each place of ramification of $D$. In particular, we can ensure that $\pi(\chi_v)$ is a supercuspidal representation when there is a unique place $w$ of $K$ over $v$ where $\chi_w$ is not invariant under Gal$(K_w/F_v)$; in particular, $w$ is inert over $v$ \cite{Schmidt}. This then implies that $\pi(\chi_v)$ is a discrete series representation; see \cite{Sally}.

\subsection{Example of co-compact dihedral forms}

We will now provide one fully explicit example of such a character. We consider the field $F = \mathbb{Q}(i)$ and $K = \mathbb{Q}(i, \sqrt{3}) = \mathbb{Q}(\zeta_{12})$. Then $K$ has two complex embeddings $v_1$ and $v_2$. The group of units of $K$ is $\mathbb{Z} \times \mathbb{Z}/12 \mathbb{Z}$; the generators are the fundamental unit $2 + \sqrt{3}$ and the twelfth root of unity $\zeta_{12} = e^{2 \pi i /12}$. Throughout this section we use $\zeta_n$ to denote the $n^{th}$ root of unity $e^{2\pi i/n}$.

We use  two rational primes $5$ and $17$  which are $1\bmod 4$. Thus they split as $5 = p_1 \bar p_1$ and $17 = p_2 \bar p_2$ where $p_1 = 2+i$, $p_2 = 4+i$. Then $|F_{p_1}| = 5$ and $|F_{p_2}| = 17$, and we note that $5$ and $17$ are $2 \mod 3$, so $3$ is not a square in $F_{p_1}$ and $F_{p_2}$, and $p_1$ and $p_2$ stay inert in $K/F$. Then let $D$ be the unique quaternion algebra ramified over $F$ at $p_1$ and $p_2$ and split at all other places; see \cite[Theorem 7.3.6]{MR} for the existence and uniqueness of this quaternion algebra.

We define the multiplicative function $\chi = \prod_v \chi_v$ on the ideles $I_K$ as the product of characters on each completion $K_{v}^\times$. We will now construct $\chi_v$ for $v = p_1, p_2$, and the infinite place $v_1$; at all other places we take $\chi_v$ to be trivial. For $\chi$ to be a Hecke character, it must be trivial on the generators $\zeta_{12}$ and $2 + \sqrt{3}$ of the units in $K$.

Since $p_1$ remains a prime in $K$, $K_{p_1} = F_{p_1}(\sqrt{3})$ is a proper quadratic extension; let $\mathcal{O}_{p_1}$ be its ring of integers and let $\hat{\mathfrak{p}}_1$ be the principal ideal generated by $p_1$. Recall that the multiplicative group of $K_{p_1}$ decomposes into
\[K_{p_1}^* = (p_1) \times \mu_{p_1} \times U^{(1)} \]
where $(p_1)$ is the group of integral powers of $p_1$,  $U^{(1)}$ is the group $1 + \mathfrak{p}_1$ of principal units, and $\mu_{p_1}$ is a set of representatives of $(\mathcal{O}_{p_1}/ \hat{\mathfrak{p}}_1)^\times$ (which will be chosen to form a group) \cite[Prop. II.5.3]{Neu99}.  We choose $\chi_{p_1}$ to be trivial on $(p_1)$ and $U^{(1)}$ and will now explicitly define it on $\mathcal{O}_{p_1}/\hat{\mathfrak{p}}_1$.

There exists an isomorphism $\mathcal{O}_{p_1}/\hat{\mathfrak{p}}_1 \cong \mathcal{O}/\mathfrak{p}_1$, where on the right hand side we are no longer considering the p-adic closure \cite[Prop. II.4.3]{Neu99}. In this case, $\mathcal{O} = \mathbb{Z}[\zeta_{12}]$ is the ring of integers of $K$, and $\mathfrak{p}_1$ is the maximal ideal of $2+i$ in $\mathcal{O}$.

This can be made completely explicit. Since $\zeta_{12}$ has minimal polynomial  $x^4 - x^2+1$, the ring of integers of $K$ is
\[\mathbb{Z}[\zeta_{12}] = \{ a_0 + a_1 \zeta_{12} + a_2 \zeta_{12}^2+a_{3} \zeta_{12}^{3}: a_i \in \mathbb{Z}\}\]
where we can reduce by $\zeta_{12}^4 = \zeta_{12}^2-1$. Note that this contains $2 + i = 2 + \zeta_{12}^3$. Now we want to consider the maximal ideal generated by $2 + i$. This will contain $(2+i)(2-i) = 5$. In addition, this contains $2 + i = 2+ \zeta_{12}^3$ and $\zeta_{12}(2 + i) = 2 \zeta_{12} + \zeta_{12}^4 = \zeta_{12}^2+ 2\zeta_{12}-1$. Therefore, in the quotient space $\mathbb{Z}[\zeta_{12}] /\mathfrak{p}_1$, we can reduce with $\zeta_{12}^2 \equiv (1 - 2 \zeta_{12}) \bmod p_1$. Thus the space can be reduced down to
\[\mathbb{Z}[\zeta_{12}]/ \mathfrak{p}_1 = \{ [a_0 + a_1 \zeta_{12}]: a_i \in \mathbb{Z}/5\mathbb{Z}\}\]
which is a field with $5^2 = 25$ elements (since $\mathfrak{p}_1$ is a maximal ideal). The multiplicative group of order $24$ is the coefficient group $\mu_{p_1}$ for the $p_1$-adic numbers $K_{p_1}$.

By direct calculation, one can verify that $\zeta+3$ is a generator of this group. We choose the character $\chi_{p_1}(\zeta_{12}+3) = \zeta_{24}$, which has the property that $\chi_{p_1}(-1) = -1$. We also compute that in this quotient ring, the generating units of $K$ are the products $2 + \sqrt{3} = 4 + 2 \zeta_{12} \equiv (\zeta_{12}+3)^{16} \bmod p_1$, and $\zeta_{12} \equiv (\zeta_{12} + 3)^{20} \bmod p_1$. Therefore, the values of the local character at the generating units of $K$ are 
\[\chi_{p_1}(\zeta_{12}) = \zeta_{24}^{20} = \zeta_{6}^5,\] 
\[\chi_{p_1}(2 + \sqrt{3}) = (\zeta_{24})^{16} = \zeta_{3}^2.\]

Similarly for $p_2 = 4 + i$, we define the Hecke character $\chi_{p_2}$ on the coefficient group $\mathcal{O}_{p_2}/\hat{\mathfrak{p}}_2 \cong \mathcal{O}/\mathfrak{p}_2$. In this case, $\mathfrak{p}_2$ contains $(4 + i) (4 - i) = 17$ and $\zeta_{12}(4 + \zeta_{12}^3) = \zeta_{12}^2 + 4 \zeta_{12}-1$. Thus we can reduce the quotient space to
\[\mathbb{Z}[\zeta_{12}]/ \mathfrak{p}_2 = \{ [a_0 + a_1 \zeta_{12}]: a_i \in \mathbb{Z}/17\mathbb{Z}\},\]
which is a field of order $17^2 = 289$. The multiplicative group is a cyclic group of order $288$.

To construct the character, we choose the generator $7 + \zeta_{12}$, which we confirm is a generator using Sage. We then set $\chi_{p_2}(7 + \zeta_{12}) = \zeta_{288}$, which guarantees that $\chi_{p_2}(-1) = -1$. We also calculate that in the quotient ring, $(7 + \zeta_{12})^{24} \equiv \zeta_{12} \bmod p_2$ and $(7 + \zeta_{12})^{208} \equiv 6 + 2\zeta \equiv(2 + \sqrt{3}) \bmod p_2$. Therefore,
\[\chi_{p_2}(\zeta_{12}) = \zeta_{288}^{24} = \zeta_{12},\]
\[\chi_{p_2}(2 + \sqrt{3}) = \zeta_{288}^{208} = \zeta_{18}^{13}.\]

At the infinite place $v_1$, we can choose a character $\chi_{v_1}(z) = |z|^{it_1} (\frac{z}{|z|})^{k_1}$, where $t_1\in \mathbb{R}$ and $k_1 \in \mathbb{Z}$ will be determined. (Note that this corresponds to the character $\chi_{\nu,p}$ defined in \eqref{characters-T} with $\nu = i t_1$ and $p = k_1$, and so the induced representation $\pi(\chi_{v_1}) \cong \pi_{it_1, k_1}$ in the notation of previous sections.) To ensure that $\chi$ is trivial on the generators, these must satisfy the following conditions:
\[\chi_{p_1}(\zeta_{12}) \chi_{p_2}(\zeta_{12}) \chi_{\nu_1}(\zeta_{12}) =  \zeta_6^5  \zeta_{12} \zeta_{12}^{k_1} = \zeta_{12}^{11+k_1} = 1, \]
\[ \chi_{p_1}(2 + \sqrt{3}) \chi_{p_2}(2 + \sqrt{3}) \chi_{\nu_1}(2 + \sqrt{3}) = \zeta_{3}^2 \zeta_{18}^{13} |2 + \sqrt{3}|^{i t_1} = 1.\]
The first equation is satisfied when $k_1 \equiv 1 \bmod 12$. The second equation is satisfied by 
\[t_1 = \frac{2 \pi(n + 11/18)}{\log|2 + \sqrt{3}|}\]
for $n \in \mathbb{Z}$. This yields an arithmetic progression of solutions (not centered at $0$)  for each $k \equiv 1 \bmod 12$. Note that this character depends on the choices of generators for each quotient ring, and other arithmetic progressions of spectral parameters could potentially be obtained by choosing different generators.

Recall that $K_{p_1} = F_{p_1}(\sqrt{3})$ is a quadratic extension. Therefore, the non-trivial element of $\text{Gal}(K_{p_i}/F_{p_i})$ will map $\sqrt{3} \mapsto - \sqrt{3}$. Since $\chi_{p_i}(-1) = -1$, and therefore $\chi_{p_i}(-\sqrt{3}) = - \chi_{p_i}(\sqrt{3})$, $\chi_{p_i}$ is not invariant under the Galois group $\text{Gal}(K_{p_i}/F_{p_i})$, and thus $\pi(\chi_v)$ is a discrete series representation at each place of ramification $v$ of $D$ (namely, $v \in p_1, p_2$).

\bibliographystyle{amsalpha}
\bibliography{biblio}

\providecommand{\bysame}{\leavevmode\hbox to3em{\hrulefill}\thinspace}
\providecommand{\MR}{\relax\ifhmode\unskip\space\fi MR }
% \MRhref is called by the amsart/book/proc definition of \MR.
\providecommand{\MRhref}[2]{%
  \href{http://www.ams.org/mathscinet-getitem?mr=#1}{#2}
}
\providecommand{\href}[2]{#2}
\begin{thebibliography}{MMO14b}

\bibitem[Bil86]{Bill}
Patrick Billingsley, \emph{Probability and measure}, second ed., Wiley Series
  in Probability and Mathematical Statistics: Probability and Mathematical
  Statistics, John Wiley \& Sons, Inc., New York, 1986. \MR{830424}

\bibitem[CFJ16]{CFJ16}
Byungchul Cha, Daniel Fiorilli, and Florent Jouve, \emph{Prime number races for
  elliptic curves over function fields}, Ann. Sci. \'{E}c. Norm. Sup\'{e}r. (4)
  \textbf{49} (2016), no.~5, 1239--1277. \MR{3581815}

\bibitem[Cha08]{Cha08}
Byungchul Cha, \emph{Chebyshev's bias in function fields}, Compos. Math.
  \textbf{144} (2008), no.~6, 1351--1374. \MR{2474313}

\bibitem[Dev20]{Devin}
Lucile Devin, \emph{Chebyshev's bias for analytic {$L$}-functions}, Math. Proc.
  Cambridge Philos. Soc. \textbf{169} (2020), no.~1, 103--140. \MR{4120786}

\bibitem[Dir37]{Dirichlet}
Peter Gustav~Lejeune Dirichlet, \emph{Beweis eines satzes {\"u}ber die
  arithmetische progression. bericht {\"u}ber die verhandlungen der
  k{\"o}niglich preussischen akademie der wissenschaften berlin}, Dirichets
  Werke (Berlin, Reimer, 1889) (1837), 313--342.

\bibitem[{\relax DLMF}]{NIST}
\emph{{\it NIST Digital Library of Mathematical Functions}},
  {http://dlmf.nist.gov/10.9.E1, Release 1.1.5 of 2022-03-15}, F.~W.~J. Olver,
  A.~B. {Olde Daalhuis}, D.~W. Lozier, B.~I. Schneider, R.~F. Boisvert, C.~W.
  Clark, B.~R. Miller, B.~V. Saunders, H.~S. Cohl, and M.~A. McClain, eds.

\bibitem[DM21]{DM21}
Lindsay Dever and Djordje Mili\'cevi\'c, \emph{{Ambient Prime Geodesic Theorems
  on Hyperbolic 3-Manifolds}}, International Mathematics Research Notices
  (2021), rnab048.

\bibitem[EGM98]{EGM}
J.~Elstrodt, F.~Grunewald, and J.~Mennicke, \emph{Groups acting on hyperbolic
  space}, Springer Monographs in Mathematics, Springer-Verlag, Berlin, 1998,
  Harmonic analysis and number theory. \MR{1483315}

\bibitem[FJ20]{FJ20}
Daniel Fiorilli and Florent Jouve, \emph{{Unconditional Chebyshev biases in
  number fields}}, 2020.

\bibitem[Gel75]{Gelbart}
Stephen~S. Gelbart, \emph{Automorphic forms on ad\`ele groups}, Annals of
  Mathematics Studies, No. 83, Princeton University Press, Princeton, N.J.;
  University of Tokyo Press, Tokyo, 1975. \MR{0379375}

\bibitem[GM06]{GM06}
Andrew Granville and Greg Martin, \emph{Prime number races}, Amer. Math.
  Monthly \textbf{113} (2006), no.~1, 1--33. \MR{2202918}

\bibitem[Hua19]{Huang}
Bingrong Huang, \emph{Sup-norm and nodal domains of dihedral {M}aass forms},
  Comm. Math. Phys. \textbf{371} (2019), no.~3, 1261--1282. \MR{4029832}

\bibitem[Hum]{Humphries}
Peter Humphries, \emph{{Reference for Kronecker-Weyl theorem in full
  generality}}, MathOverflow, URL:https://mathoverflow.net/q/162929 (version:
  2014-04-09).

\bibitem[JL70]{JL}
H.~Jacquet and R.~P. Langlands, \emph{Automorphic forms on {${\rm GL}(2)$}},
  Lecture Notes in Mathematics, Vol. 114, Springer-Verlag, Berlin-New York,
  1970. \MR{0401654}

\bibitem[Kna01]{Knapp}
Anthony~W. Knapp, \emph{Representation theory of semisimple groups}, Princeton
  Landmarks in Mathematics, Princeton University Press, Princeton, NJ, 2001, An
  overview based on examples, Reprint of the 1986 original. \MR{1880691}

\bibitem[KT62]{KnapTuran}
Stanislaw Knapowski and P{\'a}l Tur{\'a}n, \emph{Comparative prime-number
  theory. i}, Acta Mathematica Academiae Scientiarum Hungarica \textbf{13}
  (1962), no.~3, 299--314.

\bibitem[LG04]{Lok}
H.~Lokvenec-Guleska, \emph{Sum formula for {$SL_2$} over imaginary quadratic
  number fields}, Ph.D. thesis, Universiteit Utrecht, 2004.

\bibitem[{Lit}14]{Littlewood}
J.~E. {Littlewood}, \emph{{Sur la distribution des nombres premiers.}}, {C. R.
  Acad. Sci., Paris} \textbf{158} (1914), 1869--1872 (French).

\bibitem[LL21]{LL21}
Francesco Lin and Michael Lipnowski, \emph{The {Seiberg-Witten} equations and
  the length spectrum of hyperbolic three-manifolds}, Journal of the American
  Mathematical Society \textbf{35} (2021), no.~1, 233--293.

\bibitem[Luo13]{Luo}
Wenzhi Luo, \emph{{L$^4$-Norms of the Dihedral Maass Forms}}, International
  Mathematics Research Notices \textbf{2014} (2013), no.~8, 2294--2304.

\bibitem[MMO14a]{MMO2014}
Gregory Margulis, Amir Mohammadi, and Hee Oh, \emph{Closed geodesics and
  holonomies for {K}leinian manifolds}, Geom. Funct. Anal. \textbf{24} (2014),
  no.~5, 1608--1636. \MR{3261636}

\bibitem[MMO14b]{MMO2014a}
Gregory {Margulis}, Amir {Mohammadi}, and Hee {Oh}, \emph{{Closed geodesics and
  holonomies for Kleinian manifolds}}, arXiv e-prints (2014), arXiv:1404.5698.

\bibitem[MR03]{MR}
Colin Maclachlan and Alan~W. Reid, \emph{The arithmetic of hyperbolic
  3-manifolds}, Graduate Texts in Mathematics, vol. 219, Springer-Verlag, New
  York, 2003. \MR{1937957}

\bibitem[Neu99]{Neu99}
J\"{u}rgen Neukirch, \emph{Algebraic number theory}, Grundlehren der
  mathematischen Wissenschaften [Fundamental Principles of Mathematical
  Sciences], vol. 322, Springer-Verlag, Berlin, 1999, Translated from the 1992
  German original and with a note by Norbert Schappacher, With a foreword by G.
  Harder. \MR{1697859}

\bibitem[PS87]{PS87}
Ralph Phillips and Peter Sarnak, \emph{Geodesics in homology classes}, Duke
  Math. J. \textbf{55} (1987), no.~2, 287--297. \MR{894581}

\bibitem[RS94]{RS}
Michael Rubinstein and Peter Sarnak, \emph{{Chebyshev's bias}}, Experimental
  Mathematics \textbf{3} (1994), no.~3, 173 -- 197.

\bibitem[Sal88]{Sally}
Paul~J. Sally, Jr., \emph{Some remarks on discrete series characters for
  reductive {$p$}-adic groups}, Representations of {L}ie groups, {K}yoto,
  {H}iroshima, 1986, Adv. Stud. Pure Math., vol.~14, Academic Press, Boston,
  MA, 1988, pp.~337--348. \MR{1039842}

\bibitem[Sar83]{Sarnak1983}
P.~Sarnak, \emph{The arithmetic and geometry of some hyperbolic
  three-manifolds}, Acta Math. \textbf{151} (1983), no.~3-4, 253--295.
  \MR{723012}

\bibitem[Sch02]{Schmidt}
Ralf Schmidt, \emph{Some remarks on local newforms for gl(2)}, J. Ramanujan
  Math. Soc. \textbf{17} (2002), 115--147.

\bibitem[SW99]{SarWa}
Peter Sarnak and Masato Wakayama, \emph{Equidistribution of holonomy about
  closed geodesics}, Duke Math. J. \textbf{100} (1999), no.~1, 1--57.
  \MR{1714754}

\end{thebibliography}

\end{document}